\numberwithin{equation}{section}
\newtheorem{definition}{Definition}
\newtheorem{theorem}[definition]{Theorem}
\newtheorem{remark}[definition]{Remark}
\newcommand{\curl}{\mathop{\mathbf{curl}}\nolimits}
\newcommand{\di}{\mathop{\mathrm{div}}\nolimits}
\newcommand{\R}{\mathbb{R}}
\newcommand{\N}{\mathbb{N}}
\newcommand{\nd}{n_p}
\newcommand{\dd}{d}
\newcommand{\Dt}{\tau}
\newcommand{\Ds}{\kappa}
\renewcommand{\O}{\Omega}
\newcommand{\rL}{\mathrm{L}}
\newcommand{\rH}{\mathrm{H}}
\newcommand{\partialt}{d_t}
\newcommand{\bv}[1]{{\boldsymbol #1}}
\newcommand{\balpha}{\bv{\alpha}}
\newcommand{\btheta}{\bv{\theta}}
\newcommand{\bphi}{\bv{\phi}}
\newcommand{\bbalpha}{\bar{\bv{\alpha}}}
\newcommand{\bbtheta}{\bar{\bv{\theta}}}
\newcommand{\bbphi}{\bar{\bv{\phi}}}
\newcommand{\balphaI}{\balpha_{int}}
\newcommand{\bthetaI}{\btheta_{int}}
\newcommand{\hD}{\widehat{D}}
\newcommand{\bX}{\mathbf{X}}
\newcommand{\bx}{\boldsymbol{x}}
\newcommand{\br}{\boldsymbol{r}}
\newcommand{\bvarphi}{\boldsymbol{\varphi}}
\newcommand{\brho}{\boldsymbol{\rho}}
\newcommand{\hbx}{\widehat{\bx}}
\newcommand{\bvel}{\mathbf{f}}
\newcommand{\rmvel}{\mathrm{f}}
\newcommand{\hbvel}{\widehat{\rmvel}}
\newcommand{\eps}{\varepsilon}
\newcommand{\calH}{\mathcal{H}}
\newcommand{\calU}{\mathcal{U}}
\newcommand{\calT}{\mathcal{T}}
\newcommand{\calV}{\mathcal{V}}
\newcommand{\calJ}{\mathcal{J}}
\newcommand{\calF}{\mathcal{F}}
\newcommand{\calC}{\mathcal{C}}
\newcommand{\hbP}{\widehat{\mathbb{B}}}
\newcommand{\bD}{\bv{D}}
\newcommand{\bR}{\bv{R}}
\newcommand{\bH}{\bv{h}}
\newcommand{\mD}{\mathbb{D}}
\newcommand{\mR}{\mathbb{R}}
\newcommand{\weakto}{\rightharpoonup}
\DeclareMathOperator*{\argmin}{arg\,min}
\newcommand{\pair}[1]{\left\langle #1 \right\rangle}
\newcommand{\abssec}[1]{\noindent\normalsize {\bfseries #1\quad }\ignorespaces}
\renewenvironment{abstract}{\abssec{Abstract}}{\par\vspace{.1in}}
\newenvironment{keywords}{\abssec{Key Words}}{\par\vspace{.1in}}
\newenvironment{AMS}{\abssec{AMS subject
		classification}}{\par\vspace{.1in}}
\DeclareMathAlphabet{\mathpzc}{OT1}{pzc}{m}{it}
\numberwithin{equation}{section}
\title{
    Controlling the Kelvin Force: Basic Strategies and Applications to Magnetic Drug Targeting
\thanks{The work of  H. Antil has been partially supported by NSF grants DMS-1109325 and DMS-1521590.
 R.H. Nochetto has been partially supported by NSF grants DMS-1109325 and DMS-1411808 and
  P. Venegas  has been  supported by NSF grant DMS-1411808 and FONDECYT project 11160186.}
}
\author{Harbir Antil\thanks{Department of Mathematical Sciences, George Mason University, Fairfax, VA 22030, USA. \texttt{hantil@gmu.edu}}
\and
Ricardo H. Nochetto\thanks{Department of Mathematics and Institute for Physical
           Science and Technology, University of Maryland College Park, MD 20742, USA.
\texttt{rhn@math.umd.edu}}
\and 
Pablo Venegas\thanks{GIMNAP, Departamento de Matem\'atica, Universidad del B\'io-B\'io, Chile.
\texttt{pvenegas@ubiobio.cl}}
}
\begin{document}

\maketitle

\begin{abstract}
Motivated by problems arising in magnetic drug targeting, we propose
to generate an almost constant Kelvin (magnetic) force in a target subdomain,
moving along a prescribed trajectory. This is carried out by solving a
minimization problem with a tracking type cost functional. The
magnetic sources are assumed to be dipoles and the control
variables are the magnetic field intensity, the source location and
the magnetic field direction. The resulting magnetic field is shown
to effectively steer the drug concentration, governed by a
drift-diffusion PDE, from an initial to a desired location with limited spreading.
\end{abstract}

\begin{keywords}
Magnetic drug targeting; magnetic field design; Kelvin Force; non-convex minimization problem; dipole approximation.
\end{keywords}

\begin{AMS}
49J20, 35Q35, 35R35, 65N30
\end{AMS}

\section{Introduction}
Magnetic drug targeting (MDT) is an important application of ferrofluids where drugs,
with ferromagnetic particles in suspension, are injected into the body.
The external magnetic field then can control the drug and subsequently the drug can target the relevant areas, for example, solid tumors
(see, for instance, \cite{Lubbe1996}).
Mathematically this process can be modeled as follows: let the concentration of magnetic nanoparticles  confined in a domain $\widetilde\Omega\subset \R^\dd, \dd=2,3$.
Let $c$ be the drug concentration and $\bH$ the magnetic field, then the evolution
 of  $c$ by the applied magnetic field is given by the following
  convection-diffusion model \cite{GR2005,KS2011}:
\begin{align}\label{eq:strong_1}
\partial_t c+\di \left( -A\nabla c   +( \bv{u} +
  \gamma \bv{f}(\bH))c\right)=0 \quad \mbox{in } \widetilde\Omega\times(0,T)&\\
(-A\nabla c   +(\bv{u} +
  \gamma \bv{f}(\bH))c )\cdot\boldsymbol{n}=0\quad \mbox{on } \partial\widetilde\Omega\times(0,T)\qquad c(x, 0) = c_0
\quad \mbox{in } \widetilde\Omega&\label{eq:strong_2}\\
 \curl \bH=\bv{0} \quad \mbox{in } \widetilde\Omega\qquad   \di \del{\mu\bH}=0
 \quad \mbox{in } \widetilde\Omega\label{eq:strong_max}
\end{align}
where $\gamma$ is a constitutive constant,  $A$ is a diffusion coefficient matrix,  
$\bv{u}$ is a fixed velocity vector, $\boldsymbol{n}$ is the outward unit normal, the 
magnetic permeability $\mu$ is assumed to be constant, 
and $\bv{f}$ is the \textit{Kelvin force}  applied to a magnetic particle:
$$
\bv{f}(\bH)= \nu \nabla|\bH|^2 ,
$$
where  $\nu>0$ is a constant depending on the volume of the particle and its
permeability (see \cite{KS2011} for details).
The two fundamental units that determine
the evolution of concentration $c$ are transport and diffusion (cf.~\eqref{eq:strong_1}).

The aim of magnetic drug delivery is to move a drug, initially
concentrated in a small subdomain $D_0$ and described by the initial concentration
$c_0$, to another suddomain $D_t$ (desired location) while minimizing the
spreading which is characterized by the concentration $c(t)$ at time
$t$. This is a challenge because magnetic fields inherently
tend to disperse magnetic particles. It is known that it is not
possible to concentrate magnetic nanoparticles with a static magnetic source
(Earnshaw's theorem). To understand this essential obstruction,
let $D\subset\widetilde{\Omega}$ be a subdomain and observe that
\[
\di \bv{f} (\bv{h})=\di(\nabla|\bH|^2)=2\Delta \bH+2|\nabla \bH|^2=2|\nabla \bH|^2\geq 0
\quad\textrm{in }\widetilde{\Omega}
\]
implies
\begin{equation}\label{E:defocussing}
\int_{\partial D} \bv{f}(\bH)\cdot\boldsymbol{n} \,
ds=\int_{D}\di\bv{f}(\bH) \, dx\geq 0.
\end{equation}
This prevents the magnetic force from focussing
$\bv{f}(\bH)\cdot\boldsymbol{n}\le0$ on $\partial D$, a dispersion
effect we could compensate for. We are thus
confronted with the following question:
\begin{equation}\label{E:key-question}
\begin{minipage}{0.85\linewidth}
\emph{
Is it possible to generate an appropriate magnetic force $\bv{f}(\bH)$
to steer a concentration to a desired location while minimizing
the spreading inherent to magnetic forces?
}
\end{minipage}
\end{equation}
A first approach to tackle this problem was proposed in \cite{KS2011}.
A feedback control of an arrangement of magnetic sources (with fixed
positions) is proposed to
drive a spot of distributed ferrofluid from an initial point to a target position.
The authors focus on the center of mass of the concentration
and its covariance matrix which measures the spreading. This leads
to a constrained minimization problem where
the dynamics of \eqref{eq:strong_1}-\eqref{eq:strong_2} are approximated by two coupled
ordinary differential equations.

Our approach to \eqref{E:key-question} aims to design and study an
optimization framework for steering a \emph{target subdomain} $D_t$ from an initial
to a desired location via a magnetic force $\bv{f}(\bH)$
which is almost constant in space within $D_t$ and thus minimizes the drug
spreading outside $D_t$ due to \eqref{E:defocussing}. We stress that
the relative size of the diffusion coefficient matrix $A$ also
influences the spreading. This paper continues the program started in
\cite{ANV2016} upon adding the following features:
\begin{enumerate}[$\bullet$]
\item
\emph{Controls:} The control variables are now the magnetic field
intensity, the magnetic field direction, and the position of the
magnetic dipoles, instead of just the former \cite{ANV2016}.
This provides greater flexibility and controllability
of the process.

\item
\emph{Final configurations:}
We present several non-trivial, though extremely simplified,
examples for $d=2$ that allow us to assess the question of feasibility
of MDT and of our approach for its development and automation. For
instance, we study the issue of moving the concentration around
obstacles and of magnetic injection while minimizing the spreading.

\item
\emph{Advection-diffusion PDE:}
For the magnetic injection example, we solve \eqref{eq:strong_1} with a realistic Neumann
boundary condition \eqref{eq:strong_2}, instead of the Dirichlet boundary
conditions as in \cite{ANV2016}. We also consider a smaller domain than the one considered in
\cite{KS2011,ANV2016} which may lead to numerical oscillations as the concentration tends to go to the boundary
(see \cite{NBBS2011}).
In order to deal with the advection-dominant case
we have adopted the edge-averaged finite element method of \cite{XZ1999} but other alternatives
such as the implicit scheme with SUPG stabilization or the explicit schemes are equally valid.
Even though we do not observe
oscillations in the solution, we notice a large amount of diffusion due to the upwinding nature of
the scheme. This behavior makes it difficult to control the concentration
in complicated geometries, 
for instance, the one needed to control
the flow around an obstacle.

For the second example we intend to move the concentration around an obstacle.
Given that a small diffusion parameter is needed to achieve this goal, homogeneous
Dirichlet boundary condition (instead of Neumann condition) is meaningful if the concentration
stays away from the boundary $\partial\widetilde\Omega$ (see Section~\ref{s:explicit} for details).
Moreover, in order to deal with the numerical diffusion, we consider finite element method for space discretization 
and the explicit Euler scheme with mass lumping for time discretization, including a correction to tackle the dispersive effects of mass lumping \cite{GP2013}. The resulting scheme works better than the upwind scheme.

\end{enumerate}

It is worth mentioning that this approach is not only applicable for
magnetic drug targeting but for
many other applications where magnetic force plays a role such as:
gene therapy \cite{D2006}, magnetized
stem-cells \cite{SKL2008}, magnetic tweezers \cite{HJBTF2003},
lab-on-a-chip systems that include
magnetic particles or fluids \cite{GLL2010} or
separation of particles \cite{LYL2014}, just to name a few.

The paper is organized as follows: in section~\ref{s:da} we describe the dipole approximation of the magnetic sources to be used throughout the remaining paper. Our main work begins in section~\ref{sec:model} where we establish the optimization problems. In section~\ref{s:disc} we discuss the numerical approximation and convergence of our discrete scheme. We provide numerical illustrations for the optimization problem in section~\ref{sec:example1} and conclude with a numerical scheme and several illustrative examples for \eqref{eq:strong_1}-\eqref{eq:strong_max} in section~\ref{s:addiff_numerics}.

\section{Dipole approximation}\label{s:da}
 Let $\O\subset \R^d$, $d=2,3$
be open bounded fictitious domain where we intent to control the magnetic force and $T>0$ be the final time.
 We assume that the magnetic sources lies outside $\overline{\Omega}$, then, from
 the  Maxwell equations it follows that the magnetic field $\bH$ satisfies:
\begin{align}\label{eq:control_h1}
\curl \bH&=\bv{0}, \quad   \di \bH=0 \quad \mbox{in } \Omega,
\end{align}
where the last equation follows by assuming a linear relation between the magnetic induction
 and the magnetic field. The magnetic field generated by a current distribution
and a permanent magnet  can be modeled by the Biot-Savart law, which is a magnetostatic approximation.
However, for simplicity, in our case we consider a dipole approximation to the magnetic
source, which provides a concise and easily tractable representation
 of the magnetic field (see \cite{PA2013} for a quantification  of the error associated
with the dipole approximation). This approximation is commonly used
 for localization of objects in applications ranging from medical imaging to
military. It is also extensively used in real-time control of magnetic devices in medical sciences
\cite{FKA2010,MA2011,NKA2010}.

From now on we will assume that the magnetic field is modeled by the superposition
of a fixed number $\nd$ of dipoles, namely
\begin{equation}\label{eq:h_mag_1}
\bH=\sum_{i=1}^{\nd}\alpha_i(t)\del{\dd\dfrac{\br_i\br_i^\top}{|\br_i|^2}
-\mathbb{I}}\dfrac{\widehat{\bv{d}_i}}{|\br_i|^\dd},
\end{equation}
where $\br=\bx-\bx_i\in \R^{\dd}$ and $\mathbb{I}\in \R^{\dd\times \dd}$ is the identity matrix. In addition,
$\widehat{\bv{d}_i}\in\R^\dd$ and $\bx_i\in\R^\dd\setminus\overline{\O}$, $i=1,\ldots,\nd$, denote
unit vectors and dipole positions, respectively.
It is straightforward to show that the magnetic field given by \eqref{eq:h_mag_1}
satisfies \eqref{eq:control_h1}.

%
\section{Model optimization problem}\label{sec:model}

To fix ideas, we assume the time dependent domain $D_t$  moves along a pre-specified curve smooth curve $\calC$
and
 is strictly contained in $\Omega$ for every $t\in[0,T]$.
Our goal here is then to approximate a vector field $\bvel \in [\rL^{2}(0,T;\rL^2(\Omega))]^d$ in $D_t$ by the
 so--called Kelvin force generated by a configuration of magnetic sources.
 In particular, for MDT, $\bvel$ could be considered uniform on $D_t$.
This vector field will then replace $\nabla| \bH|^2$  in \eqref{eq:strong_1} and will drive the concentration to a desired location.

In \cite{ANV2016} we propose an optimal control problem in order to study the feasibility of the dipole approximation,
namely, whether we can create  a Kelvin force field that allows us to control, for instance, the concentration
of nanoparticles in a bounded domain $\Omega$.
With this aim, a desired vector  field  $\bvel$ and a time dependent domain $D_t\subset\Omega$, $t\in[0,T]$
 are assumed to be known (see Figure~\ref{fig:3config} (left)).
The field $\bvel$ is approximated in $D_t\subset\Omega$ by the Kelvin force generated by a configuration of $\nd$
dipoles with fixed directions $\widehat{\bv{d}}_i$ and positions $\bv{x}_i$.  Here
the control is the vector magnetic field's magnitude $\balpha(t):=(\alpha_1(t),\ldots,\alpha_{\nd}(t))^T\in\R^{\nd}$.

In this work we will approximate $\bvel:=(\rmvel_{1},\ldots, \rmvel_{\dd})^\top$ with two different dipole configurations.

\subsection{Controlling intensities and directions}

\begin{figure}[h!]
\centering
\includegraphics[height=3.2cm]{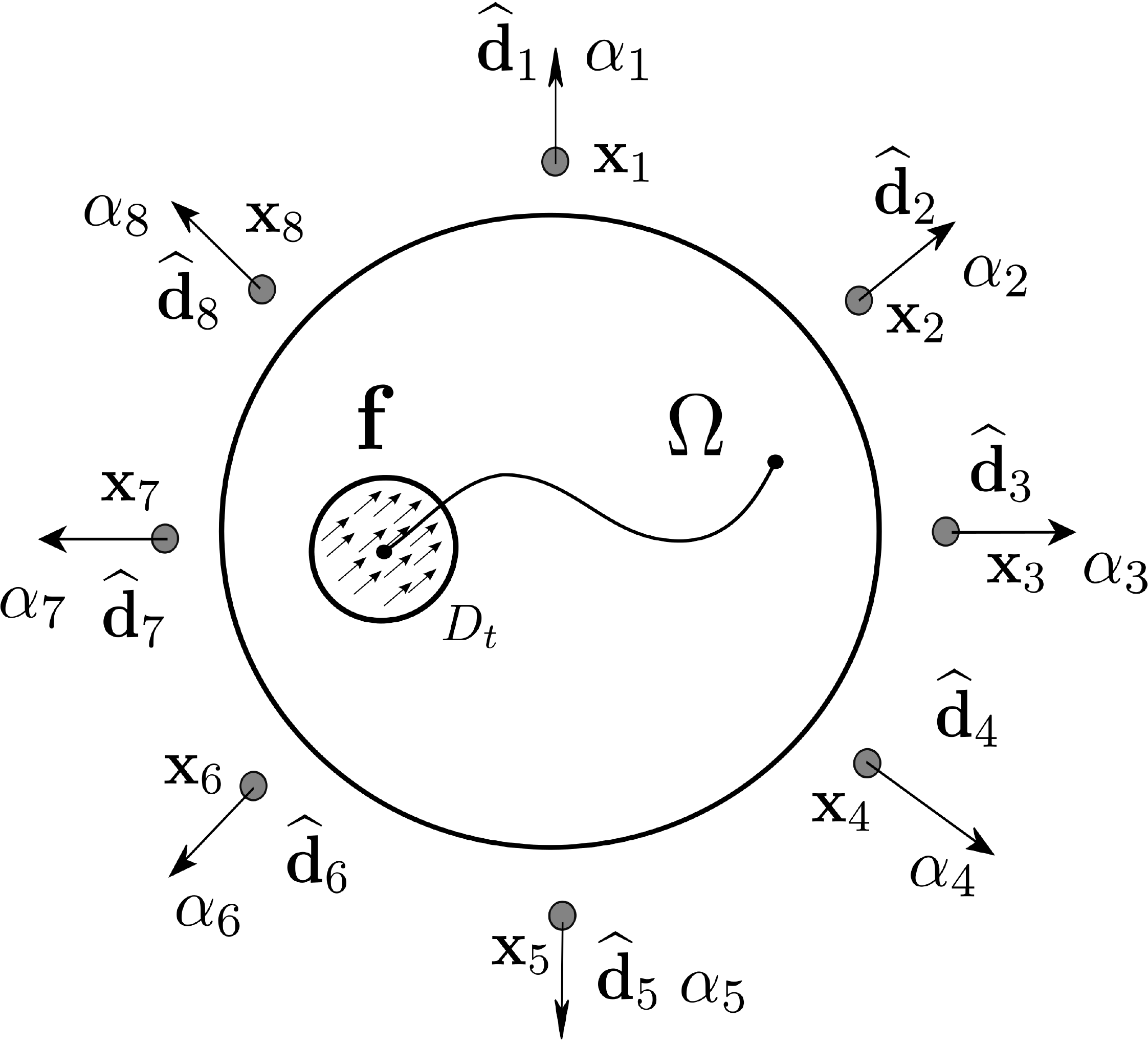}
\includegraphics[height=3.2cm]{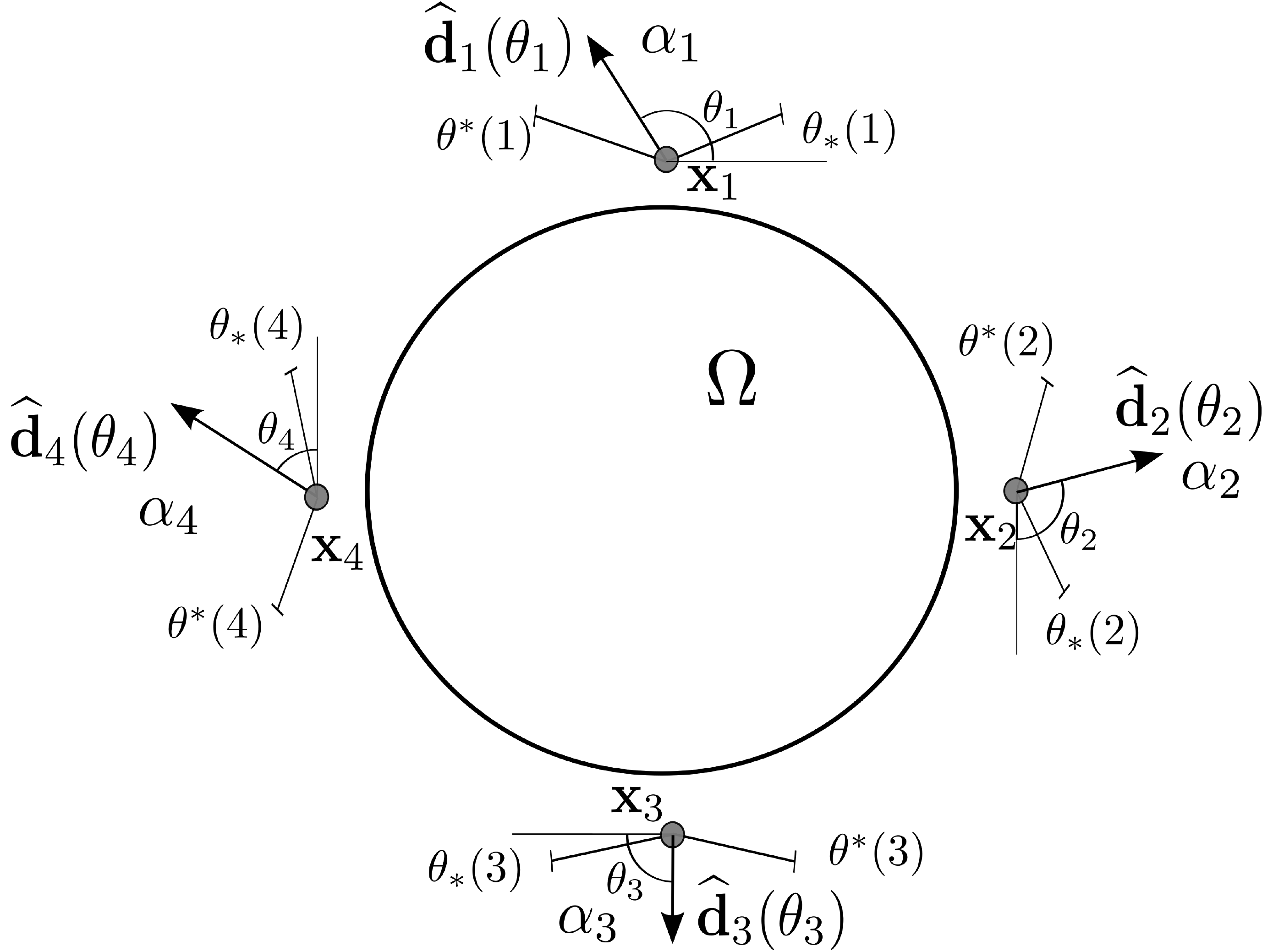}
\includegraphics[height=3.2cm]{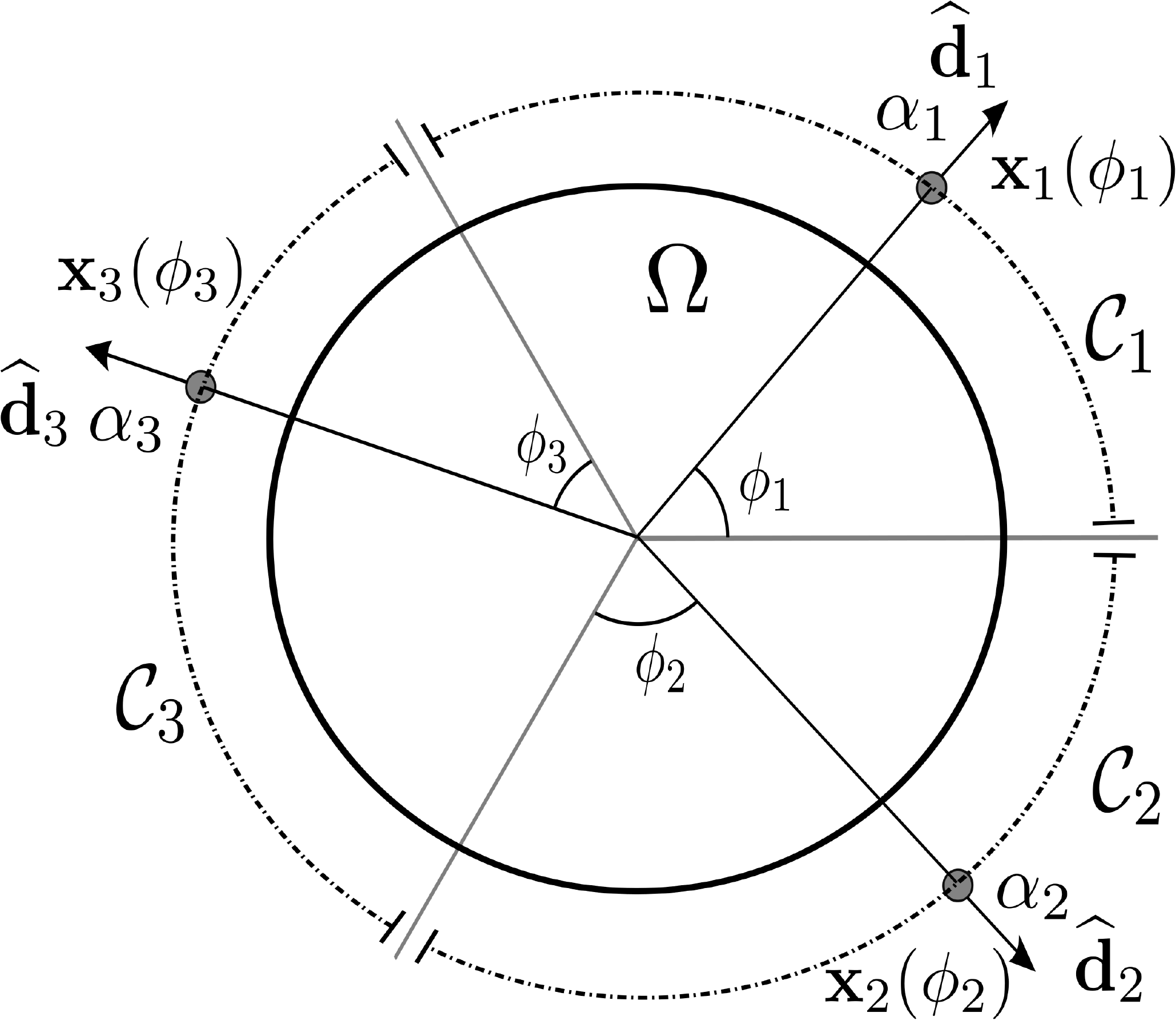}
\caption{Left: Configuration of $\nd=8$ dipoles surrounding a computational domain $\Omega\subset \R^2$ and moving
domain $D_t$.
 Here the target vector field $\bvel$, represented with arrows, is shown only in $D_t$. The center panel shows
 a configuration of $\nd=4$ dipoles with variable directions.
 On the right  panel we show the positions  $\{\bx_i\}_{i=1}^{3}$ of the dipoles moving along the curves
 $\calC_i, i=1,2,3$, respectively.
 Each dipole is characterized by
 its position $\bx_i$ (represented by a dot outside $\O$), direction $\bv{d}_i$
(represented by an arrow)
and the magnitude $\balpha_i$, for $i=1,\ldots,\nd$. }
\label{fig:3config}
\end{figure}

 First we consider  fixed dipole positions $\left\{\bx_i\right\}_{i=1}^{\nd}$  and
 the controls are the magnitudes $\left\{\alpha_i\right\}_{i=1}^{\nd}$ and  directions
 $\big\{\widehat{\bv{d}}_i\big\}_{i=1}^{\nd}$ parametrized by one degree of freedom per dipole
 (see Figure~\ref{fig:3config} (center)), namely, if
$\balpha(t):=(\alpha_1(t),\ldots,\alpha_{\nd}(t))^\top\in\R^{\nd}$ and
$\btheta(t):=(\theta_1(t),\ldots,\theta_{\nd}(t))^\top\in\R^{\nd}$ denote the vectors of magnetic field's
magnitude and ``angles'', such that $\widehat{\bv{d}}_i=\widehat{\bv{d}}_i(\theta_i)$ for a regular
function $\widehat{\bv{d}}_i$ (for instance, $\widehat{\bv{d}}_i(\theta_i)=\left(\cos(\theta_i),\sin(\theta_i)\right)$
when $\dd=2$),  then we want to study the problem
\begin{subequations}\label{eq:PJ1}
\begin{equation}\label{eq:min_dir}
\min_{(\balpha,\btheta)\in \calH_{ad}\times\calV_{ad} } \calJ(\bH(\balpha,\btheta))
\end{equation}
with
\begin{equation}\label{eq:calJ}
\calJ(\bH(\balpha,\btheta))=\dfrac{1}{2}\int_0^T\|\nabla|\bH|^2-\bvel\|^2_{\rL^2(D_t)}
+\dfrac{\lambda}{2}\int_0^T|\partial_t\balpha|^2 +\dfrac{\eta}{2}\int_0^T|\partial_t\btheta|^2
\end{equation}
and
\begin{equation}\label{eq:h_mag1}
\bH(\balpha,\btheta)=\sum_{i=1}^{\nd}\alpha_i(t)\del{\dd\dfrac{\br_i\br_i^\top}{|\br_i|^2}-\mathbb{I}}
\dfrac{\widehat{\bv{d}_i}(\theta_i)}{|\br_i|^\dd},
\end{equation}
\end{subequations}
and where  $(\balpha,\btheta) \in \calH_{ad}\times\calV_{ad} $ and $\lambda$ and $\eta$ are the costs of the control.
For given  constant vectors $\balpha_0,\btheta_0,\balpha_*,\btheta_*, \balpha^*, \btheta^*\in\R^{\nd}$, we
seek $(\balpha,\btheta)$  in the following admissible convex sets:
\[
\calH_{ad} := \left\{\balpha\in [\rH^1(0,T)]^{\nd}: \balpha(0)=\balpha_0,\,\, \mbox{and}\,\, \balpha_* \leq
\balpha(t)\leq \balpha^*,\,\, \forall t\in [0,T]  \right\}
\]
and
\[
\calV_{ad} := \left\{\btheta \in [\rH^1(0,T)]^{\nd} : \btheta(0)=\btheta_0,\,\, \mbox{and}\,\, \btheta_*\leq
\btheta(t)\leq \btheta^*, \forall \, t\in [0,T]  \right\}.
\]
Notice that, in view of \eqref{eq:h_mag1},
we can rewrite $\bH$ as $\bH(\balpha,\btheta)=\mD(\btheta)\balpha $
where
\[
\mD(\btheta)=\big(\bD_1(\theta_1)\, \bD_2(\theta_2)
\,\ldots\,\bD_{\nd}(\theta_{\nd})\big)\in
   [\rH^1(0,T;C^{\infty}(\overline{\Omega}))]^{\dd\times {\nd}}
\]
is such that
\begin{equation*}
\bD_i(\theta_i):=\del{\dd\dfrac{(\bx-\bx_i)(\bx-\bx_i)^\top}{|\bx-\bx_i|^2}-\mathbb{I}}
\dfrac{\widehat{\bv{d}_i}(\theta_i)}{|\bx-\bx_i|^\dd}=\mathbb{M}_i(\bx)\widehat{\bv{d}_i}(\theta_i) .
\end{equation*}
Thus, the Kelvin force in terms of $\balpha$ and $\btheta$ is given by
\begin{align}\label{eq:F_theta}
\nabla|\bH|^2&=\nabla\left(\balpha(t)^\top(\mD(\btheta)^\top\mD(\btheta))\balpha(t)\right)\nonumber \\
&=\begin{pmatrix}\balpha(t)^\top\partial_{x_1}(\mD(\btheta)^\top\mD(\btheta))\balpha(t)\\
\vdots
\\
\balpha(t)^\top\partial_{x_\dd}(\mD(\btheta)^\top\mD(\btheta))\balpha(t)\end{pmatrix}
=\begin{pmatrix}\balpha^\top\mathbb{B}_1(\btheta)\balpha\\
\vdots
\\
\balpha^\top\mathbb{B}_\dd(\btheta)\balpha\end{pmatrix}
\end{align}
with $\mathbb{B}_k(\btheta):=\partial_{x_k}(\mD^\top(\btheta)\mD(\btheta))
\in [\rH^1(0,T;C^{\infty}(\overline{\Omega}))]^{{\nd}\times {\nd}}$.

\subsection{Controlling intensities and positions}

For the second configuration we consider  fixed dipole directions $\big\{\widehat{\bv{d}}_i\big\}_{i=1}^{\nd}$
  and  the controls are the magnitudes $\left\{\alpha_i\right\}_{i=1}^{\nd}$ and  positions $\left\{\bx_i\right\}_{i=1}^{\nd}$.
 In this case, we assume that each dipole $i$ moves along a prescribed trajectory $\calC_i$ parameterized
by a $C^1([0,T])$ curve $\brho_i: [\bphi_{*}(i),\bphi^{*}(i)]\to \calC_i$, $i=1,\ldots,\nd$   where
$\bphi_*, \bphi^*\in\R^{\nd}$ (see Figure~\ref{fig:3config} (right)). Thus, we study the problem
\begin{subequations}\label{eq:PJ2}
\begin{equation}\label{eq:min_pos}
\min_{(\balpha,\bphi)\in \calH_{ad}\times\calU_{ad} }  \calF(\bH(\balpha,\bphi))
\end{equation}
with
\begin{equation}\label{eq:calF}
\calF(\bH(\balpha,\bphi))=\dfrac{1}{2}\int_0^T\|\nabla|\bH|^2-\bvel\|^2_{\rL^2(D_t)}
+\dfrac{\lambda}{2}\int_0^T|\partial_t\balpha|^2 +\dfrac{\beta}{2}\int_0^T|\partial_t\bphi|^2
\end{equation}
and
\begin{equation}\label{eq:h_mag2}
\bH(\balpha,\bphi)=\sum_{i=1}^{\nd}\alpha_i(t)\del{\dd\dfrac{\br_i(\phi_i)
\br_i(\phi_i)^\top}{|\br_i(\phi_i)|^2}-\mathbb{I}}
\dfrac{\widehat{\bv{d}_i}}{|\br_i(\phi_i)|^\dd},
\end{equation}
\end{subequations}
where $\br_i(\phi_i)=(\bx-\brho_i(\phi_i))$,   $\bphi=(\phi_1,\ldots,\phi_{\nd})$, $(\balpha,\bphi) \in \calH_{ad}\times\calU_{ad} $ and, for a given  constant vector $\bphi_0\in\R^{\nd}$, we
consider the admissible set
$ \calU_{ad} := \big\{\bphi\in [\rH^1(0,T)]^{\nd}: \bphi(0)=\bphi_0,\,\, \mbox{and}\,\, \bphi_* \leq
\bphi(t)\leq \bphi^*,\,\, \forall t\in [0,T]  \big\}$.
We first notice that, in view of \eqref{eq:h_mag2}, we can rewrite $\bH$ as
$\bH(\balpha,\bphi)=\mR(\bphi)\balpha$ where
$\mR(\bphi)=\left(\bR_1(\phi_1)\, \bR_2(\phi_2)\,\ldots\,\bR_{\nd}(\phi_{\nd})\right)$
belongs to $[\rH^1(0,T;C^{\infty}(\overline{\Omega}))]^{\dd\times {\nd}}$ and is given by
\begin{equation*}
\bR_i(\phi_i):=\del{\dd\dfrac{\br_i(\phi_i)\br_i(\phi_i)^\top}{|\br_i(\phi_i)|^2}-\mathbb{I}}
\dfrac{\widehat{\bv{d}_i}}{|\br_i(\phi_i)|^\dd},
\end{equation*}
therefore, we can write the Kelvin force in terms of $\balpha$ and $\bphi$ as follows
\begin{align}\label{eq:F_phi}
\nabla|\bH|^2&=\nabla\left(\balpha(t)^\top(\mR(\bphi)^\top\mR(\bphi))\balpha(t)\right) \nonumber \\
&=\begin{pmatrix}\balpha(t)^\top\partial_{x_1}(\mR(\bphi)^\top\mR(\bphi))\balpha(t)\\
\vdots
\\
\balpha(t)^\top\partial_{x_\dd}(\mR(\bphi)^\top\mR(\bphi))\balpha(t)\end{pmatrix}
=\begin{pmatrix}\balpha^\top\mathbb{G}_1(\bphi)\balpha\\
\vdots
\\
\balpha^\top\mathbb{G}_\dd(\bphi)\balpha\end{pmatrix}
\end{align}
with $\mathbb{G}_k(\bphi):=\partial_{x_k}(\mR^\top(\bphi)\mR(\bphi))
\in [\rH^1(0,T;C^{\infty}(\overline{\Omega}))]^{{\nd}\times {\nd}}$, $k=1,\ldots,\dd$.
 We will show existence
of solution to problems \eqref{eq:PJ1} and \eqref{eq:PJ2} via a minimizing sequence argument
(see, for instance, \cite{book:T10}).
The feasibility of the dipole approximation can be studied with the first term of $\calJ(\bH(\balpha,\btheta))$
and $\calF(\bH(\balpha,\bphi))$, because
the computed Kelvin force can be used to control a desired concentration. On the other hand, the last two terms
in \eqref{eq:calJ} (and \eqref{eq:calF}) will enforce a smooth evolution of the intensities and angles
(and positions).
%

\subsection{Existence and optimality conditions}

Next, we focus on the existence of a solution to  the minimization problem \eqref{eq:PJ1},
which is nonconvex.
For notational simplicity, from now on we denote by $\mathrm{V}$ both a Banach space $\mathrm{V}$ and
the Banach tensor product  $\mathrm{V}^{\nd}$.
\begin{theorem}[existence of minimizers]\label{thm:exist_min}
There exist at least one solution $(\bbalpha,\bbtheta)$ to the minimization problem \eqref{eq:PJ1}.
\end{theorem}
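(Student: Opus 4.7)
The plan is to apply the direct method of the calculus of variations. First, since $\calJ \geq 0$, the infimum $J^* := \inf \calJ \geq 0$ is finite, and one may pick a minimizing sequence $(\balpha_n,\btheta_n) \in \calH_{ad}\times \calV_{ad}$ with $\calJ(\bH(\balpha_n,\btheta_n)) \to J^*$. The $L^\infty$ bounds built into $\calH_{ad}$ and $\calV_{ad}$ together with the penalization terms $\tfrac{\lambda}{2}\|\partial_t\balpha_n\|_{L^2}^2$ and $\tfrac{\eta}{2}\|\partial_t\btheta_n\|_{L^2}^2$ in $\calJ$ immediately yield uniform bounds on $\balpha_n$ and $\btheta_n$ in $H^1(0,T)^{\nd}$.

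Next I would use the compact embedding $H^1(0,T) \hookrightarrow C([0,T])$ (Rellich–Kondrachov) to extract a subsequence, not relabeled, such that $\balpha_n \weakto \bbalpha$ and $\btheta_n \weakto \bbtheta$ in $H^1(0,T)^{\nd}$ while $\balpha_n \to \bbalpha$ and $\btheta_n \to \bbtheta$ strongly in $C([0,T])^{\nd}$. Uniform convergence preserves both the initial conditions $\balpha(0)=\balpha_0$, $\btheta(0)=\btheta_0$ and the pointwise box constraints, so the weak limit $(\bbalpha,\bbtheta)$ belongs to the admissible set $\calH_{ad}\times\calV_{ad}$, which is therefore sequentially weakly closed in $H^1$.

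The remaining task is to pass to the limit in $\calJ$. For the regularization terms this is immediate: $\balpha \mapsto \tfrac{\lambda}{2}\int_0^T|\partial_t\balpha|^2$ and the analogous functional in $\btheta$ are convex and continuous on $H^1(0,T)^{\nd}$, hence weakly lower semicontinuous. For the tracking term I exploit the key structural fact that, since the source positions $\bx_i$ lie outside $\overline{\Omega}$, each matrix field $\mathbb{M}_i(\bx)$ in $\bD_i(\theta_i)=\mathbb{M}_i(\bx)\widehat{\bv{d}}_i(\theta_i)$ is smooth and uniformly bounded on $\overline{\Omega}$, together with all its spatial derivatives. Combining this with the assumed regularity of $\theta_i \mapsto \widehat{\bv{d}}_i(\theta_i)$ and the uniform convergence $\btheta_n \to \bbtheta$ yields $\mathbb{B}_k(\btheta_n) \to \mathbb{B}_k(\bbtheta)$ uniformly in $(\bx,t)\in \overline{\Omega}\times[0,T]$ for every $k=1,\dots,\dd$. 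Recalling the representation \eqref{eq:F_theta} and the uniform convergence $\balpha_n \to \bbalpha$, each entry $\balpha_n^\top \mathbb{B}_k(\btheta_n)\balpha_n$ converges uniformly on $\overline{\Omega}\times[0,T]$ to $\bbalpha^\top\mathbb{B}_k(\bbtheta)\bbalpha$, so dominated convergence gives
\[
\int_0^T \norm{\nabla|\bH(\balpha_n,\btheta_n)|^2 - \bvel}{L^2(D_t)}^2 \longrightarrow \int_0^T \norm{\nabla|\bH(\bbalpha,\bbtheta)|^2 - \bvel}{L^2(D_t)}^2 .
\]
Adding these three limits shows $\calJ(\bH(\bbalpha,\bbtheta)) \leq \liminf_n \calJ(\bH(\balpha_n,\btheta_n)) = J^*$, so $(\bbalpha,\bbtheta)$ is a minimizer.

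The main obstacle is the non-convexity of the tracking term as a function of $(\balpha,\btheta)$: it is quartic in $\balpha$ and composed with a nonlinear direction map in $\btheta$, so weak lower semicontinuity is not automatic. The argument above circumvents this by using strong uniform convergence obtained from the compact embedding, which in turn relies crucially on the fact that the dipole kernels $1/|\br_i|^\dd$ stay bounded on $\overline{\Omega}$ because $\bx_i \notin \overline{\Omega}$; without this geometric separation, the continuity of $\balpha \mapsto \nabla|\bH|^2$ into $L^2(\Omega)$ would require much more care.
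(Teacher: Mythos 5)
Your proposal is correct and follows essentially the same route as the paper: the direct method with a minimizing sequence, uniform $H^1(0,T)$ bounds from the box constraints and the $\lambda,\eta$ penalization terms, the compact embedding $H^1(0,T)\hookrightarrow C([0,T])$ to upgrade to uniform convergence, strong convergence of the tracking term via the smoothness of $\mathbb{M}_i$ on $\overline{\Omega}$ (dipoles outside $\overline{\Omega}$) and of $\widehat{\bv{d}}_i$, and weak lower semicontinuity of the quadratic derivative terms. No substantive differences to report.
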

\begin{proof}
We apply  the direct method of the calculus of variations. Given that $\calJ$
is bounded below by zero, we deduce that $j = \inf_{(\balpha,\btheta) \in \calH_{ad}\times\calV_{ad}}
\calJ(\balpha,\btheta)$ is finite.
We can thus construct a minimizing sequence $\{ (\balpha_n,\btheta_n) \}_{n\in \mathbb{N}}$ such that
\[
    j = \lim_{n\rightarrow \infty} \calJ(\balpha_n,\btheta_n).
\]
As the sequence $\{ (\balpha_n,\btheta_n) \}_{n\in \mathbb{N}}$ is uniformly bounded in
$\calH_{ad}\times\calV_{ad}\subset [\rH^1(0,T)]^2$,
we can extract a (not relabeled) weakly
convergent subsequence $\{ \balpha_{n} \}_{n\in \N}$ such that
\begin{equation}\label{eq:ex_1}
    \balpha_{n} \weakto \bbalpha \quad \mbox{in } \rH^1(0,T),
                \quad \bbalpha \in \calH_{ad}\quad\mbox{and} \quad
    \btheta_{n} \weakto \bbtheta \quad \mbox{in } \rH^1(0,T),
                \quad \bbtheta \in \calV_{ad} .
\end{equation}
Moreover, according to \cite[Theorem~9.16]{B11} we have
\begin{align}\label{eq:ex_2}
    \balpha_{n} \to \bbalpha \quad \mbox{and }\quad
    \btheta_{n} \to \bbtheta \quad \mbox{in }
C([0,T]).
\end{align}
%
To show the optimality of $\bar{\balpha}$, we first consider \eqref{eq:ex_2} to get that
\begin{align}\label{eq:PJ1_1conv}
\int_0^T\|\nabla|\bH(\balpha_n,\btheta_n)|^2-\rmvel_{i}\|^2_{\rL^2(D_t)}
\to \int_0^T\|\nabla|\bH(\bbalpha,\bbtheta)|^2-\rmvel_{i}\|^2_{\rL^2(D_t)}  \qquad i=1,\ldots,\dd,
\end{align}
which follows  from \eqref{eq:F_theta} and the fact that
\begin{multline*}
\mathbb{B}_k(\btheta_n)_{i,j}-\mathbb{B}_k(\bbtheta)_{i,j}=
 (\widehat{\bv{d}}^\top_i(\btheta_{n}(i))-\widehat{\bv{d}}^\top_i(\bbtheta(i)))
 \left(\partial_{x_k}(\mathbb{M}_i(\bx)^\top\mathbb{M}_j(\bx))\right)
 \widehat{\bv{d}}_j(\btheta_{n}(j)) \\
  \widehat{\bv{d}}^\top_i(\bbtheta(i))\left(\partial_{x_k}(\mathbb{M}_i(\bx)^\top\mathbb{M}_j(\bx))\right)
 (\widehat{\bv{d}}_j(\btheta_n(j))-\widehat{\bv{d}}_j(\bbtheta(j))).
\end{multline*}
and
$\widehat{\bv{d}_i}(\btheta_{n}(i)) \to \widehat{\bv{d}_i}(\bbtheta(i))$ in $[C([0,T])]^{\dd}$,
$i=1,\ldots,\nd$. Notice that here we have used, for instance, $\btheta(i)$ to denote the $i$-th component of vector $\btheta$.
This and the fact that the last terms in $\calJ$
are weakly lower semicontinuous (see~\cite[Theorem 2.12]{book:T10}) yields
\begin{align*}
\min_{(\balpha,\btheta)\in \calH_{ad}\times\calV_{ad}} \calJ(\balpha,\btheta) = \liminf_{n\to\infty} \calJ(\balpha_n,\btheta_n)
  \geq\calJ(\bbalpha,\bbtheta),
\end{align*}
which concludes the proof.
\end{proof}

By applying the same technique as above and using that
$\brho$ belongs to $C^1([0,T])$, we obtain the following existence result for problem \eqref{eq:PJ2}.
\begin{theorem}[existence of minimizers]\label{thm:exist_min2}
There exist at least one solution $(\bbalpha,\bbphi)$ to the minimization problem \eqref{eq:PJ2}.
\end{theorem}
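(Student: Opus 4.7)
The plan is to mirror the direct-method argument used in Theorem~\ref{thm:exist_min}, the only new ingredient being continuous dependence of the tracking term on the position variable $\bphi$. First I would note that $\calF \geq 0$, so $f := \inf_{(\balpha,\bphi)\in\calH_{ad}\times\calU_{ad}} \calF(\balpha,\bphi)$ is finite, and pick a minimizing sequence $\{(\balpha_n,\bphi_n)\}_{n\in\N}$. The quadratic regularization terms $\int_0^T |\partial_t \balpha|^2$ and $\int_0^T |\partial_t \bphi|^2$, together with the pointwise bounds $\balpha_* \leq \balpha_n \leq \balpha^*$ and $\bphi_* \leq \bphi_n \leq \bphi^*$, bound the sequence uniformly in $\rH^1(0,T)\times\rH^1(0,T)$. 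Passing to a (not relabeled) subsequence yields $\balpha_n \weakto \bbalpha$ and $\bphi_n \weakto \bbphi$ weakly in $\rH^1(0,T)$, together with strong convergence $\balpha_n \to \bbalpha$ and $\bphi_n \to \bbphi$ in $C([0,T])$ by the compact embedding recalled in \cite[Theorem~9.16]{B11}. Since $\calH_{ad}$ and $\calU_{ad}$ are closed and convex, hence weakly closed, we obtain $(\bbalpha,\bbphi) \in \calH_{ad}\times\calU_{ad}$.

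The key step is passing to the limit in the tracking term. Because $\brho_i \in C^1([\bphi_*(i),\bphi^*(i)])$ and $\bphi_n \to \bbphi$ in $C([0,T])$, we have $\brho_i(\bphi_n(i)) \to \brho_i(\bbphi(i))$ uniformly on $[0,T]$. The modelling assumption that each trajectory $\calC_i$ lies in $\R^\dd\setminus\overline{\Omega}$, combined with compactness of the parameter interval $[\bphi_*(i),\bphi^*(i)]$, supplies a constant $c>0$ with $|\br_i(\phi_i)(\bx)| = |\bx - \brho_i(\phi_i)| \geq c$ uniformly for $\bx \in \overline{\Omega}$ and all admissible $\phi_i$. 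On this compact safe region the maps $\phi_i \mapsto \bR_i(\phi_i)(\bx)$ and $\phi_i \mapsto \partial_{x_k}(\mR^\top(\bphi)\mR(\bphi))(\bx)$ are smooth, so uniform convergence of $\bphi_n$ lifts to uniform convergence $\mathbb{G}_k(\bphi_n) \to \mathbb{G}_k(\bbphi)$ in $C(\overline{\Omega})$ for every $k=1,\ldots,\dd$. Combining this with the strong convergence $\balpha_n \to \bbalpha$ in $C([0,T])$ and the representation \eqref{eq:F_phi} gives
\begin{equation*}
\int_0^T \norm{\nabla|\bH(\balpha_n,\bphi_n)|^2 - \rmvel_i}{\rL^2(D_t)}^2 \,dt \;\to\; \int_0^T \norm{\nabla|\bH(\bbalpha,\bbphi)|^2 - \rmvel_i}{\rL^2(D_t)}^2 \,dt, \qquad i=1,\ldots,\dd.
\end{equation*}

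The remaining two terms of $\calF$ are convex and continuous on $\rH^1(0,T)$, hence weakly lower semicontinuous (\cite[Theorem~2.12]{book:T10}), so
\begin{equation*}
\calF(\bbalpha,\bbphi) \leq \liminf_{n\to\infty} \calF(\balpha_n,\bphi_n) = f,
\end{equation*}
showing that $(\bbalpha,\bbphi)$ is a minimizer. The main technical obstacle is precisely the uniform lower bound on $|\br_i(\phi_i)|$: without it the dipole kernel in \eqref{eq:h_mag2} could blow up inside $\overline{\Omega}$ along the minimizing sequence, breaking continuity of the tracking term under merely weak convergence of $\bphi_n$; this is why the $C^1$ regularity of $\brho_i$ and the placement of $\calC_i$ outside $\overline{\Omega}$ are essential.
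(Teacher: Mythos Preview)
Your proposal is correct and follows precisely the route the paper indicates: the paper does not spell out a separate proof for Theorem~\ref{thm:exist_min2} but simply states that one applies ``the same technique as above and using that $\brho$ belongs to $C^1([0,T])$.'' You have written out exactly that argument, with the natural adaptation that the continuity of the tracking term now hinges on $\brho_i\in C^1$ (in place of the smoothness of $\widehat{\bv d}_i$ used for Theorem~\ref{thm:exist_min}), and you have made explicit the uniform lower bound on $|\br_i(\phi_i)|$ coming from $\calC_i\subset\R^\dd\setminus\overline{\Omega}$, which the paper leaves implicit.
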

%

\subsection{First order optimality conditions}

To state first order optimality conditions we follow standard arguments
because of the fact that $\calJ : [\rH^1(0,T)]^2 \rightarrow \mathbb{R}$ is Fr\'echet differentiable.
We first notice that, in view of \eqref{eq:h_mag_1}, we can also rewrite $\bH$ as
\[
\bH(\balpha,\btheta)=\sum_{i=1}^{\nd}\mathbb{\bv{A}}(\alpha_j)\widehat{\bv{d}}_j(\theta_j)
\]
where $\mathbb{\bv{A}}(\alpha_i)\in [\rH^1(0,T;C^{\infty}(\overline{\Omega}))]^{\dd\times \dd}$
and is given by
\begin{equation*}
\mathbb{\bv{A}}(\alpha_i):=\alpha_i\del{\dd\dfrac{(\bx-\bx_i)(\bx-\bx_i)^\top}{|\bx-\bx_i|^2}-\mathbb{I}}
\dfrac{1}{|\bx-\bx_i|^\dd} , \quad i=1,\ldots, \nd.
\end{equation*}
Thus, the Kelvin force in terms of $\balpha$ and $\btheta$ can be written as follows
\begin{align*}
\nabla|\bH|^2 &=\sum_{m,n=1}^{\nd}\nabla\left(\widehat{\bv{d}}^\top_m(\theta_m)
(\mathbb{\bv{A}}^\top(\alpha_m)\mathbb{\bv{A}}(\alpha_n))\widehat{\bv{d}}_n(\theta_n)\right)
\\
&=\sum_{m,n=1}^{\nd}
\begin{pmatrix}\widehat{\bv{d}}^\top_m(\theta_m)\mathbb{Q}_1(\alpha_m,\alpha_n)\widehat{\bv{d}}_n(\theta_n)\\
\vdots
\\
\widehat{\bv{d}}^\top_m(\theta_m)\mathbb{Q}_\dd(\alpha_m,\alpha_n)\widehat{\bv{d}}_n(\theta_n)\end{pmatrix}
\end{align*}
with
$\mathbb{Q}_k(\alpha_m,\alpha_n):=\partial_{x_k}(\mathbb{\bv{A}}^\top(\alpha_m)\mathbb{\bv{A}}(\alpha_n))
\in [\rH^1(0,T;C^{\infty}(\overline{\Omega}))]^{\dd\times \dd}$ for $k=1,\ldots,\dd$.

\begin{theorem}[first order optimality condition for $\calJ$]\label{var_ineq}
If $(\bbalpha,\bbtheta)\in \calH_{ad}\times \calV_{ad}$  denotes an optimal control, given
by Theorem~\ref{thm:exist_min}, then the first order necessary optimality condition satisfied by
$(\bbalpha,\bbtheta)$ is
\[
\calJ'(\bbalpha,\bbtheta)\pair{\balpha - \bbalpha,\btheta-\bbtheta}\geq 0\qquad \forall (\balpha,\btheta)
\in \calH_{ad}\times \calV_{ad}
\]
where, for $(\delta\balpha,\delta\btheta)=(\balpha - \bbalpha,\btheta-\bbtheta)$ we have
 \begin{align*}
 \calJ'(\bbalpha,\bbtheta)&\pair{\delta\balpha,\delta \btheta}=\int_0^T \Bigg(\sum_{i=1}^\dd \int_{D_t}
 \left(\bbalpha^\top\mathbb{B}_i(\bbtheta)\bbalpha-\rmvel_{i}\right)
 \left(2\bbalpha^\top\mathbb{B}_i(\bbtheta)\delta\balpha\right) d\bx \\
 &+\sum_{i=1}^\dd \int_{D_t}
 \left(\bbalpha^\top\mathbb{B}_i(\bbtheta)\bbalpha-\rmvel_{i}\right)
 \left(2\sum_{m,n=1}^{\nd}(\widehat{\bv{d}}^\top_m)'(\theta_m)\mathbb{Q}_i(\alpha_m,\alpha_n)
 \widehat{\bv{d}}_n(\theta_n)\delta\theta_m\right) d\bx\\
 &+\lambda \partialt\bbalpha^\top \partialt\delta\balpha+\eta \partialt\bbtheta^\top \partialt\delta\btheta  \Bigg) dt.
 \end{align*}
\end{theorem}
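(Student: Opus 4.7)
The plan is to apply the classical convex-admissible-set argument. Since $\calH_{ad} \times \calV_{ad}$ is a closed convex subset of $[\rH^1(0,T)]^{2\nd}$ and $(\bbalpha,\bbtheta)$ is a minimizer, every convex combination $(\bbalpha + s(\balpha-\bbalpha), \bbtheta + s(\btheta-\bbtheta))$ with $(\balpha,\btheta) \in \calH_{ad} \times \calV_{ad}$ and $s \in [0,1]$ again lies in the admissible set. Consequently, the scalar function $g(s) := \calJ(\bbalpha + s(\balpha-\bbalpha), \bbtheta + s(\btheta-\bbtheta))$ attains its minimum over $[0,1]$ at $s = 0$, so $g'(0^+) \ge 0$, and the conclusion is precisely the identification of $g'(0^+)$ with $\calJ'(\bbalpha,\bbtheta)\pair{\delta\balpha,\delta\btheta}$ for $\delta\balpha = \balpha - \bbalpha$ and $\delta\btheta = \btheta - \bbtheta$.

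First I would establish Fr\'echet differentiability of $\calJ$ on $[\rH^1(0,T)]^{2\nd}$. The two penalty terms are continuous quadratic forms whose derivatives are immediate, giving $\lambda \int_0^T \partialt\bbalpha^\top \partialt\delta\balpha\,dt$ and $\eta \int_0^T \partialt\bbtheta^\top \partialt\delta\btheta\,dt$. The tracking term is the composition of the pointwise mapping $(\balpha(t),\btheta(t))\mapsto\nabla|\bH(\balpha,\btheta)|^2$, which by \eqref{eq:F_theta} is polynomial in $\balpha(t)$ with coefficients that are $C^\infty$ in $\bx\in\overline{D_t}$ (thanks to $D_t\subset\subset\Omega$ and $\bx_i\notin\overline{\Omega}$) and $C^1$ in $\btheta(t)$, with a continuous quadratic functional on $\rL^2(D_t\times(0,T))$. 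Since the Sobolev embedding $\rH^1(0,T)\hookrightarrow C([0,T])$ supplies $\rL^\infty$ control of $\balpha$ and $\btheta$, the chain rule for Fr\'echet derivatives applies on a bounded neighborhood of $(\bbalpha,\bbtheta)$.

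Next I would compute the derivative of the tracking term by exploiting the two equivalent representations of $\nabla|\bH|^2$ recorded earlier. Representation \eqref{eq:F_theta} displays $\partial_{x_i}|\bH|^2 = \balpha^\top \mathbb{B}_i(\btheta)\balpha$ as a quadratic in $\balpha$; differentiating in $\balpha$ and using the symmetry of $\mathbb{B}_i(\btheta)$ yields the factor $2\bbalpha^\top\mathbb{B}_i(\bbtheta)\delta\balpha$. The alternate representation $\partial_{x_i}|\bH|^2 = \sum_{m,n}\widehat{\bv{d}}_m^\top(\theta_m)\mathbb{Q}_i(\alpha_m,\alpha_n)\widehat{\bv{d}}_n(\theta_n)$ isolates the $\btheta$-dependence in the smooth factors $\widehat{\bv{d}}_i(\theta_i)$; differentiating in $\theta_m$ and combining the two symmetric $m\leftrightarrow n$ contributions (which are equal because the scalar $\widehat{\bv{d}}_m^\top\mathbb{Q}_i(\alpha_m,\alpha_n)\widehat{\bv{d}}_n$ equals its transpose $\widehat{\bv{d}}_n^\top\mathbb{Q}_i(\alpha_n,\alpha_m)\widehat{\bv{d}}_m$) produces $2\sum_{m,n}(\widehat{\bv{d}}_m^\top)'(\theta_m)\mathbb{Q}_i(\alpha_m,\alpha_n)\widehat{\bv{d}}_n(\theta_n)\,\delta\theta_m$. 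Multiplying each factor by the residual $\bbalpha^\top\mathbb{B}_i(\bbtheta)\bbalpha-\rmvel_i$, summing over $i=1,\ldots,\dd$, and integrating over $D_t\times(0,T)$ produces the first two terms of $\calJ'(\bbalpha,\bbtheta)\pair{\delta\balpha,\delta\btheta}$.

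The main obstacle is the bookkeeping for the $\btheta$-variation: one must verify that differentiation of the double sum $\sum_{m,n}$ correctly produces the prefactor $2$ via the $m\leftrightarrow n$ symmetry, consistent with the identity $\widehat{\bv{d}}_m^\top\mathbb{Q}_i(\alpha_m,\alpha_n)\widehat{\bv{d}}_n = \widehat{\bv{d}}_n^\top\mathbb{Q}_i(\alpha_n,\alpha_m)\widehat{\bv{d}}_m$ inherited from the structure $\mathbb{\bv{A}}^\top\mathbb{\bv{A}}$. All remaining steps are routine once the smoothness of $\widehat{\bv{d}}_i$, the $C^\infty$ regularity of $\mathbb{B}_i,\mathbb{Q}_i$ on $\overline{D_t}$, and the uniform bounds on the admissible controls are in hand. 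Writing $g'(0)\ge 0$ with this explicit derivative then yields the claimed variational inequality for every $(\balpha,\btheta)\in\calH_{ad}\times\calV_{ad}$.
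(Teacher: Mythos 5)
Your proposal is correct and follows essentially the same route the paper takes: the paper gives no formal proof but explicitly prepares exactly your ingredients — Fr\'echet differentiability of $\calJ$ on $[\rH^1(0,T)]^2$, the standard variational inequality for a minimizer over the convex set $\calH_{ad}\times\calV_{ad}$, and the two representations of $\nabla|\bH|^2$ (quadratic in $\balpha$ via $\mathbb{B}_i$, bilinear in the directions via $\mathbb{Q}_i$) from which the two factors of $2$ arise by symmetry. Your verification of the $m\leftrightarrow n$ symmetry, resting on the symmetry of $\mathbb{\bv{A}}(\alpha_i)$, correctly accounts for the prefactor $2$ in the $\btheta$-term.
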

Similarly, we obtain the first order optmimality conditions for $\calF$.

\begin{theorem}[first order optimality condition for $\calF$]\label{var_ineqF}
If $(\bbalpha,\bbphi)\in \calH_{ad}\times \calU_{ad}$  denotes an optimal control, given
by Theorem~\ref{thm:exist_min2}, then the first order necessary optimality condition satisfied by
$(\bbalpha,\bbphi)$ is
\[
\calF'(\bbalpha,\bbphi)\pair{\balpha - \bbalpha,\bphi-\bbphi}\geq 0\qquad \forall (\balpha,\bphi)
\in \calH_{ad}\times \calV_{ad}
\]
where, for $(\delta\balpha,\delta\bphi)=(\balpha - \bbalpha,\bphi-\bbphi)$ we have
\begin{align*}
\calF'(\bbalpha,\bbphi)\pair{\delta\balpha,\delta \bphi}=&\int_0^T \Bigg(\sum_{i=1}^\dd \int_{D_t}
\left(\bbalpha^\top\mathbb{G}_i(\bbphi)\bbalpha-\rmvel_{i}\right)
\left(2\bbalpha^\top\mathbb{G}_i(\bbphi)\delta\balpha\right) d\bx \\
&+\sum_{i=1}^\dd \int_{D_t}
\left(\bbalpha^\top\mathbb{G}_i(\bbphi)\bbalpha-\rmvel_{i}\right)
\left(\sum_{l=1}^{\nd}\bbalpha^\top\mathbb{G}_i(\bphi)'\pair{\delta\bphi_l}\bbalpha\right) d\bx\\
&+\lambda \partialt\bbalpha^\top \partialt\delta\balpha+\beta \partialt\bbphi^\top \partialt\delta\bphi  \Bigg) dt.
\end{align*}
\end{theorem}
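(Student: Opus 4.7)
The plan is to mimic the derivation sketched immediately before for $\calJ$ and rely on the standard variational characterization of minimizers of smooth functionals on convex admissible sets. Since $\calH_{ad}$ and $\calU_{ad}$ are convex subsets of $[\rH^1(0,T)]^{\nd}$ and their product is convex, once we establish that $\calF\colon [\rH^1(0,T)]^{2\nd}\to\R$ is Fr\'echet differentiable at $(\bbalpha,\bbphi)$, the classical result yields $\calF'(\bbalpha,\bbphi)\pair{\balpha-\bbalpha,\bphi-\bbphi}\ge 0$ for every admissible $(\balpha,\bphi)$; what is left is to compute this derivative explicitly and verify it coincides with the stated expression.

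First I would verify differentiability term by term. The two regularization contributions $\tfrac{\lambda}{2}\int_0^T|\partial_t\balpha|^2$ and $\tfrac{\beta}{2}\int_0^T|\partial_t\bphi|^2$ are continuous quadratic forms on $\rH^1(0,T)^{\nd}$, so their Fr\'echet derivatives are the usual linear maps $\lambda\int_0^T\partial_t\bbalpha^\top\partial_t\delta\balpha\,dt$ and $\beta\int_0^T\partial_t\bbphi^\top\partial_t\delta\bphi\,dt$. For the tracking term I would use representation \eqref{eq:F_phi}, which shows that on each $D_t$ the $i$-th component of $\nabla|\bH|^2$ equals $\balpha^\top\mathbb{G}_i(\bphi)\balpha$, a polynomial in $\balpha$ whose coefficients depend smoothly on $\bphi$ through the $C^1$ parametrizations $\brho_i$ and the fact that $\brho_i(\phi_i)\in\R^{\dd}\setminus\overline{\O}$ keeps $|\br_i(\phi_i)|$ bounded away from zero, so $\mathbb{G}_i\in\rH^1(0,T;C^\infty(\overline{\O}))^{\nd\times\nd}$ together with its derivative with respect to $\bphi$.

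Next I would apply the chain rule. Differentiating $\balpha^\top\mathbb{G}_i(\bphi)\balpha$ in the direction $(\delta\balpha,\delta\bphi)$ gives, by symmetry of $\mathbb{G}_i$, the contribution $2\,\balpha^\top\mathbb{G}_i(\bphi)\delta\balpha$ from the $\balpha$-variation and $\sum_{l=1}^{\nd}\balpha^\top\mathbb{G}_i(\bphi)'\pair{\delta\phi_l}\balpha$ from the $\bphi$-variation, where $\mathbb{G}_i(\bphi)'\pair{\delta\phi_l}$ denotes the partial directional derivative of $\mathbb{G}_i$ with respect to the $l$-th position parameter. Combining this with the outer chain rule applied to $\tfrac{1}{2}\|\cdot-\rmvel_i\|^2$ introduces the factor $(\bbalpha^\top\mathbb{G}_i(\bbphi)\bbalpha-\rmvel_i)$ evaluated at the minimizer, reproducing exactly the integrands stated in the theorem.

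The most delicate point is not the abstract differentiation but justifying that the above pointwise-in-$t$ differentiation truly yields a Fr\'echet (not merely G\^ateaux) derivative of the composite time-integrated functional. I would handle this by the compact Sobolev embedding $\rH^1(0,T)\hookrightarrow C([0,T])$ exactly as in Theorem \ref{thm:exist_min2}; it provides $L^\infty$-in-time bounds on admissible perturbations that, together with the uniform smoothness of $\mathbb{G}_i$ and $\mathbb{G}_i'$ over the compact admissible set, control the remainder of the Taylor expansion uniformly and give a $o(\|(\delta\balpha,\delta\bphi)\|_{\rH^1})$ estimate. Once Fr\'echet differentiability is in hand, the variational inequality follows from convexity of $\calH_{ad}\times\calU_{ad}$ and optimality of $(\bbalpha,\bbphi)$ via the standard one-line argument using the curve $s\mapsto(\bbalpha+s(\balpha-\bbalpha),\bbphi+s(\bphi-\bbphi))$ for $s\in[0,1]$.
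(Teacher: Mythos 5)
Your proposal is correct and follows essentially the same route the paper takes: the paper states the result with no separate proof, appealing to the ``standard arguments'' based on Fr\'echet differentiability of the cost functional and convexity of $\calH_{ad}\times\calU_{ad}$, exactly as you do, with the derivative computed via the chain rule from the representation \eqref{eq:F_phi}. Your additional care in justifying Fr\'echet (rather than merely G\^ateaux) differentiability through the embedding $\rH^1(0,T)\hookrightarrow C([0,T])$ and the boundedness of $|\br_i(\phi_i)|$ away from zero is a welcome elaboration of details the paper leaves implicit.
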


\begin{remark}[unknown final time]
In some applications an important quantity to account for is the time it takes for
$D_t$ to arrive at its final destination. In this case, the final time is an unknown. 
This problem can be handled, following \cite{KS2011}, by reformulating
the minimization in terms of the arc length which has a fixed final value.
This approach can be applied to problems \eqref{eq:PJ1} and \eqref{eq:PJ2},
for details see \cite{ANV2016}.
\end{remark}

\section{Discretization}\label{s:disc}

In this section we propose and study a numerical approximation of
minimization problems \eqref{eq:PJ1} and \eqref{eq:PJ2}. 
We introduce a
parametrization of the moving domain $D_t$
in terms of a fixed domain $\hD\subset \R^\dd$ (see also \cite{ANV2016}).   With this in mind,
we define a map $\bX: [0,T] \times \overline{\hD}\to\overline{\O}$, such that for all $t\in [0,T]$
\begin{align*}
\bX(t,\cdot):&\overline{\hD}\to \overline{D}_t\\
      &\hbx\to \bx = \bX(t,\hbx),
\end{align*}
is a one-to-one correspondence which satisfies $\bX(t,\hD)=D_t$. For simplicity, we assume
\[
\bX(t,\hbx)=\bvarphi(t)+\psi(t)\hbx,
\]
where
\begin{equation}\label{eq:psi_reg}
\bvarphi:[0,T]\to \R^\dd,\,\quad \psi:[0,T]\to (0,+\infty),\quad \bvarphi, \psi \in \rH^1(0,T)
\end{equation}
are functions such that
$\bvarphi(0)=\boldsymbol{0}$ and $\psi(0)=1$, namely $\hD=D_{0}$.
In case $\psi(t)=\widehat{\psi}\in \R^+$ for all $t\in [0,T]$, $\bvarphi$
can be viewed as a parameterization of a desired path that the scaled domain $\widehat{\psi}\widehat{D}$
traverses from an initial position to a final position.

\subsection{Problem 1: Controlling intensities and directions}\label{s:contr_dir}

We consider the previous parametrization and rewrite the minimization problem
\eqref{eq:PJ1} in terms of $\hD$.
 Therefore, for $i=1,\dots, \dd$
\begin{align*}
\int_0^T\int_{D_t}&\left(\nabla |\bH(\balpha,\btheta)|^2-\bvel(t,\bx)\right)^2 d\bx\, dt\\
=&\sum_{i=1}^{\dd}\int_0^T\int_{D_t}\left(\balpha(t)^\top\mathbb{B}_i(\btheta,\bx)\balpha(t)-\rmvel_{i}(t,\bx)\right)^2 d\bx\, dt\\
=&
\sum_{i=1}^{\dd}\int_0^T\int_{\hD}\left(\balpha(t)^\top\mathbb{B}_i(\btheta,\bX(t,\hbx))\psi(t)^{\dd/2}\balpha(t)-\rmvel_{i}(t,\bX(t,\hbx))\psi(t)^{\dd/2}\right)^2 d\hbx\, dt,
\end{align*}
because $\det(\nabla_{\hbx}\bX(t,\hbx))=\psi(t)^\dd$.
Then, we rewrite  $\calJ$ as
\begin{align}\label{eq:calJ_D}
\calJ(\balpha,\btheta)=&\dfrac{1}{2}\sum_{i=1}^\dd\int_0^T\|\balpha^\top\hbP_i(\btheta)\balpha-\hbvel_i\|^2_{\rL^2(\hD)}
+\dfrac{\lambda}{2}\int_0^T|\partialt\balpha|^2+\dfrac{\eta}{2}\int_0^T|\partialt\btheta|^2\nonumber\\
=&\calJ^1(\balpha,\btheta)+\calJ^2(\balpha)+\calJ^3(\btheta)
\end{align}
with $\hbP_i(\btheta,\hbx):=\mathbb{B}_i(\btheta,\bX(t,\hbx))\psi(t)^{\dd/2}$
and $\hbvel_{i}(t,\hbx)=\rmvel_{i}(t,\bX(t,\hbx))\psi(t)^{\dd/2}$, $i=1,\dots, \dd$.

Next, we introduce a time discretization of problem \eqref{eq:PJ1} upon using $\calJ$ as defined in \eqref{eq:calJ_D}.
Let us fix $N\in \N$ and let $\Dt := T/N$ be the time step.
Now, for $n = 1,\ldots, N$, we define $t^n := n\Dt$,
$\hbP^n_i(\cdot)=\hbP_i(\cdot,\bX(t^n,\hbx))$ and $\hbvel^n_{i}$ to be
\begin{equation}\label{eq:discr_v}
\hbvel^n_{i}(\cdot)=\dfrac{1}{\tau}\int_{t^{n-1}}^{t^n}\hbvel_{i}(t,\cdot)dt,\qquad i=1,\ldots,\dd,
\end{equation}
which in turn allows us  to incorporate a general $\bvel$.
Then we consider a time discrete version of \eqref{eq:PJ1}: given
the initial condition $(\balpha_0,\btheta_0)=:(\balpha_\tau(0),\btheta_\tau(0))$,
find $(\bbalpha_\tau,\bbtheta_\tau) \in \calH^\tau_{ad}\times \calV^\tau_{ad}$ solving
\begin{align}
\label{eq:PJ1_disc}
(\bbalpha_\tau,\bbtheta_\tau)=\argmin_{(\balpha_\tau,\btheta_\tau)\in\calH^\tau_{ad}\times \calV^\tau_{ad} } \calJ_\tau(\balpha_\tau,\btheta_\tau),
\end{align}
where
\begin{equation*}
\calJ_\tau(\balpha_\tau,\btheta_\tau) := \calJ^1_\tau(\balpha_\tau,\btheta_\tau)
+\calJ^2_\tau(\balpha_\tau)+\calJ^3(\btheta_\tau)
\end{equation*}
and
\begin{align*}
&\calJ^1_\tau(\balpha_\tau,\btheta_\tau) :=
\Dt\sum_{n=1}^N\dfrac{1}{2}\sum_{i=1}^\dd\|(\balpha^n_\tau)^\top
\hbP^n_i(\btheta^n_\tau)\balpha^n_\tau-\hbvel^n_{i}\|^2_{\rL^2(\hD)} \\
&\calJ^2_\tau(\balpha_\tau) :=
\Dt\sum_{n=1}^N\dfrac{\lambda}{2\Dt^2}|\balpha^n_\tau-\balpha^{n-1}_\tau|^2, \qquad
\calJ^3_\tau(\btheta_\tau) :=
\Dt\sum_{n=1}^N\dfrac{\eta}{2\Dt^2}|\btheta^n_\tau-\btheta^{n-1}_\tau|^2
\end{align*}
and $\calH^\tau_{ad}=\mathbb{P}^1_\tau\cap \calH_{ad}$, $\calV^\tau_{ad}=\mathbb{P}^1_\tau\cap \calV_{ad}$
with
$\mathbb{P}^1_\tau := \{\bv{v} \in C[0,T] : \bv{v}|_{[t^{n-1},t^n]}\in \mathbb{P}^1, \ n = 1, \dots, N\}$. 
Hereafter $\mathbb{P}^1$ is the space of polynomials of degree at most 1.
Moreover, by applying the same arguments of Theorem~\ref{thm:exist_min}, it follows that
there exists $(\bbalpha_\tau,\bbtheta_\tau)\in \calH^\tau_{ad}\times\calV^\tau_{ad}$ a solution to problem
\eqref{eq:PJ1_disc}.

Next we prove the convergence of the discrete problem to a minimizer
of the continuous problem.
Such a proof is motivated by $\Gamma$-convergence theory \cite{Maso93,Braides2013}.
\begin{theorem}[Problem 1: convergence to a minimizer]\label{thm:Gconv}
The family of  minimizers $\left\{(\bbalpha_{\tau},\bbtheta_{\tau})\right\}_{\tau>0}$ to
\eqref{eq:PJ1_disc} is uniformly bounded in  $[\rH^1(0,T)]^2$
and it contains a subsequence that converges weakly to
$(\bbalpha,\bbtheta)$ in $[\rH^1(0,T)]^2$, a  solution to the minimization problem \eqref{eq:PJ1},
and $\lim_{\tau\to 0}\calJ_\tau(\bbalpha_\tau,\bbtheta_\tau)=\calJ(\bbalpha,\bbtheta)$.
\end{theorem}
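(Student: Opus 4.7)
The plan is to follow the standard $\Gamma$-convergence recipe adapted to this setting, using the fact that $\calJ^2_\tau$ and $\calJ^3_\tau$ evaluated on continuous piecewise linear functions coincide exactly with the corresponding continuous seminorm terms, since for $\balpha_\tau\in\mathbb{P}^1_\tau$ one has $\int_0^T|\partialt\balpha_\tau|^2\,dt=\Dt\sum_n\tau^{-2}|\balpha_\tau^n-\balpha_\tau^{n-1}|^2$, and likewise for $\btheta_\tau$.

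\textbf{Step 1 (uniform bound).} I would first test the discrete minimizer against the constant competitor $(\balpha_\tau,\btheta_\tau)\equiv(\balpha_0,\btheta_0)$, which lies in $\calH^\tau_{ad}\times\calV^\tau_{ad}$ by construction. Since $\hbP_i$ is smooth in $(\btheta,\bx)$, the initial data are bounded, and $\hbvel_i\in\rL^2(0,T;\rL^2(\hD))$, the tracking term is controlled by a constant independent of $\tau$. Minimality then yields $\calJ_\tau(\bbalpha_\tau,\bbtheta_\tau)\le C$, and using the identification of $\calJ^2_\tau,\calJ^3_\tau$ with the continuous seminorms I obtain a uniform $\rH^1(0,T)$ bound on $\partialt\bbalpha_\tau$ and $\partialt\bbtheta_\tau$. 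The admissibility box constraints $\balpha_*\le\bbalpha_\tau\le\balpha^*$, $\btheta_*\le\bbtheta_\tau\le\btheta^*$ supply the $\rL^\infty$ (hence $\rL^2$) bound, giving uniform boundedness in $[\rH^1(0,T)]^2$.

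\textbf{Step 2 (compactness).} By standard weak compactness I extract a (not relabeled) subsequence with $\bbalpha_\tau\weakto\bbalpha$ and $\bbtheta_\tau\weakto\bbtheta$ in $\rH^1(0,T)$. The compact embedding $\rH^1(0,T)\hookrightarrow C([0,T])$ then upgrades this to strong convergence in $C([0,T])$, and because the box constraints and initial conditions pass to the uniform limit, $(\bbalpha,\bbtheta)\in\calH_{ad}\times\calV_{ad}$.

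\textbf{Step 3 ($\liminf$ inequality).} For the smoothing terms I use weak lower semicontinuity of the $\rL^2$ norm of the derivative:
\[
\calJ^2(\bbalpha)+\calJ^3(\bbtheta)\le\liminf_{\tau\to 0}\bigl(\calJ^2_\tau(\bbalpha_\tau)+\calJ^3_\tau(\bbtheta_\tau)\bigr).
\]
For $\calJ^1_\tau$, I view the sum as the integral of the piecewise constant (in time) interpolant of the integrand evaluated at the right nodes. The map $(\balpha,\btheta)\mapsto\|\balpha^\top\hbP_i(\btheta)\balpha-\hbvel^n_i\|_{\rL^2(\hD)}^2$ is continuous on bounded sets in $\R^\nd\times\R^\nd$ thanks to the smoothness of $\mathbb{B}_i$ and of $\widehat{\bv{d}}_i$; combined with the $C([0,T])$ convergence of $(\bbalpha_\tau,\bbtheta_\tau)$ and the strong $\rL^2(0,T;\rL^2(\hD))$ convergence of the piecewise constant averages $\hbvel^n_i$ defined by \eqref{eq:discr_v} towards $\hbvel_i$, a dominated convergence argument gives $\calJ^1_\tau(\bbalpha_\tau,\bbtheta_\tau)\to\calJ^1(\bbalpha,\bbtheta)$.

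\textbf{Step 4 (recovery sequence and conclusion).} For an arbitrary competitor $(\balpha,\btheta)\in\calH_{ad}\times\calV_{ad}$ I take the nodal Lagrange interpolants $(I_\tau\balpha,I_\tau\btheta)\in\calH^\tau_{ad}\times\calV^\tau_{ad}$, which inherit the box constraints and initial conditions by convexity. Standard $\rH^1$ interpolation estimates give $I_\tau\balpha\to\balpha$ and $I_\tau\btheta\to\btheta$ strongly in $\rH^1(0,T)$, hence uniformly on $[0,T]$. Repeating the argument in Step~3 with strong convergence in place of weak then yields $\calJ_\tau(I_\tau\balpha,I_\tau\btheta)\to\calJ(\balpha,\btheta)$. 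Using minimality of $(\bbalpha_\tau,\bbtheta_\tau)$,
\[
\calJ(\bbalpha,\bbtheta)\le\liminf_{\tau\to 0}\calJ_\tau(\bbalpha_\tau,\bbtheta_\tau)\le\limsup_{\tau\to 0}\calJ_\tau(I_\tau\balpha,I_\tau\btheta)=\calJ(\balpha,\btheta),
\]
which proves both that $(\bbalpha,\bbtheta)$ is a minimizer of \eqref{eq:PJ1} and, upon choosing $(\balpha,\btheta)=(\bbalpha,\bbtheta)$, that $\lim_{\tau\to 0}\calJ_\tau(\bbalpha_\tau,\bbtheta_\tau)=\calJ(\bbalpha,\bbtheta)$.

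The main technical obstacle is the $\liminf$ inequality for the tracking term $\calJ^1_\tau$: it mixes a right-Riemann-sum discretization of the time integral with a nonlinear cubic/quartic expression in the controls, and the data $\hbvel_i$ enters only through the piecewise averages $\hbvel^n_i$. The success of this step hinges on uniform convergence of $(\bbalpha_\tau,\bbtheta_\tau)$ in $C([0,T])$ (from Rellich) together with the fact that the $\hbP_i(\btheta,\bX(t,\hbx))$ are smooth and uniformly bounded on the admissible compact box, so the integrands are dominated uniformly in $\tau$; all other steps are routine once this convergence is secured.
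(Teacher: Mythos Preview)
Your proof is correct and follows essentially the same $\Gamma$-convergence strategy as the paper: test against the constant $(\balpha_0,\btheta_0)$ for the uniform $\rH^1$ bound, extract a weakly convergent subsequence (strong in $C([0,T])$ by compact embedding), obtain the $\liminf$ inequality by combining weak lower semicontinuity of the seminorm terms with strong convergence of the tracking term via the piecewise constant time interpolant, and build the recovery sequence via the piecewise linear Lagrange interpolant. Your explicit observation that $\calJ^2_\tau,\calJ^3_\tau$ evaluated on $\mathbb{P}^1_\tau$ functions coincide with the continuous seminorms is exactly what the paper uses implicitly, and your merging of the minimality/convergence conclusions into a single chain of inequalities is a slightly tighter packaging of the paper's separate Steps~4 and~5.
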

\begin{proof}
We proceed is several steps.
\begin{enumerate}[1.-]
\item \textit{Boundedness of $\left\{(\bbalpha_\tau,\bbtheta_\tau)\right\}_{\tau>0}$ in $[\rH^1(0,T)]^2$:}
 This follows immediately from the fact that $(\bbalpha_{\tau},\bbtheta_{\tau})$ minimizes
$\calJ_{\tau}$ and $\lambda,\eta>0$: given that the constant function
$(\balpha_0(t),\btheta_0(t))=(\balpha_0,\btheta_0)$  belongs to $\calH^\tau_{ad}$, we have
\[
\calJ_{\tau}(\bbalpha_{\tau},\bbtheta_{\tau})
\leq \calJ_{\tau}(\balpha_0,\btheta_0)\leq
C\bigg(|\balpha_0|^4\Big(\sum_{i=1}^{\nd}|\widehat{\bv{d}}_i(\btheta_0(i) )|^4\Big)+\|\bvel\|^2_{\rL^2(0,T;\rL^2(\O))}\bigg).
\]
This implies the existence of a (not relabeled) weakly convergent subsequence such that
$(\bbalpha_\tau,\bbtheta_\tau) \weakto (\bbalpha,\bbtheta)$  in $[\rH^1(0,T)]^2$ and
$(\bbalpha,\bbtheta) \in \calH_{ad}\times\calV_{ad}$.
It remains to prove that $(\bbalpha,\bbtheta)$ solves \eqref{eq:PJ1} and
$\lim_{\tau\to 0}\calJ_\tau(\bbalpha_\tau,\bbtheta_\tau)=\calJ(\bbalpha,\bbtheta)$.
\item \textit{Lower bound inequality}:
We show that
\begin{equation}\label{eq:Lower_J2}
\calJ(\balpha,\btheta)\leq \underset{\tau\to 0}{\lim \inf}\calJ_\tau(\balpha_\tau,\btheta_\tau)
\end{equation}
for all $\left\{(\balpha_\tau,\btheta_\tau)\right\}_{\tau>0}\subset \calH^\tau_{ad}\times\calV^\tau_{ad}$
converging to $(\balpha,\btheta)$  weakly in $[\rH^1(0,T)]^2$.
Consequently $(\balpha_\tau,\btheta_\tau)\to(\balpha,\btheta)$ strongly in $[\rL^2(0,T)]^2$ for a
subsequence (not relabeled). Let $\overline{\Pi}_\tau$ be the piecewise constant interpolation
operator at the nodes $\left\{t^n\right\}_{n>0}$. Then by straightforward computations it follows that
\begin{equation*}
    \| \overline{\Pi}_\tau\balpha_\tau - \balpha \|_{\rL^2(0,T)}
    \rightarrow 0\quad \mbox{and}\quad \| \overline{\Pi}_\tau\btheta_\tau - \btheta \|_{\rL^2(0,T)}
    \rightarrow 0.
\end{equation*}
In view of the smoothness of $\widehat{\bv{d}}_i$, $i=1,\ldots, \nd$,
then  $\hbP_j(\overline{\Pi}_\tau\btheta_\tau) \rightarrow \hbP_j(\btheta)$ in $\rL^2(0,T; \rL^2(\widehat{D}))$,
 $j = 1, \dots, d$. Collecting these results and making use of the
bounds $\| \balpha_\tau\|_{\rL^\infty(0,T)}\leq \balpha^*$
 and $\| \btheta_\tau\|_{\rL^\infty(0,T)}\leq \btheta^*$
 we obtain (cf~\eqref{eq:PJ1_1conv})
\begin{equation*}
\overline{\Pi}_\tau\balpha_\tau^\top\hbP_i(\overline{\Pi}_\tau\btheta_\tau)\overline{\Pi}_\tau\balpha_\tau
\to\balpha^\top\hbP_i(\btheta)\balpha\quad \mbox{in  } \rL^2(0,T; \rL^2(\widehat{D})) ,
 \end{equation*}
which in conjunction with the regularity of $\bvel$ leads to 
\begin{equation}\label{eq:Low_bnd_1}
 \calJ^1_\tau(\balpha_\tau,\btheta_\tau)\to\calJ^1(\balpha,\btheta).
\end{equation}
On the other hand, given that $\partialt\balpha_\tau$ converges weakly to
 $\partialt\balpha$ in $\rL^2(0,T)$,
from the weak lower semi-continuity of the semi-norm it follows that
\begin{equation}\label{eq:Low_bnd_2}
\calJ^2(\balpha)+\calJ^3(\btheta)\leq \underset{\tau\to 0}{\lim \inf}\calJ^2_\tau(\balpha_\tau)
+ \underset{\tau\to 0}{\lim \inf}\calJ^3_\tau(\btheta_\tau).
\end{equation}
From \eqref{eq:Low_bnd_1} and \eqref{eq:Low_bnd_2} we conclude
\begin{equation*}
\calJ(\balpha,\btheta)\leq \underset{\tau\to 0}{\lim }\calJ^1_\tau(\balpha_\tau,\btheta_\tau)
+\underset{\tau\to 0}{\lim \inf}\calJ^2_\tau(\balpha_\tau)
+\underset{\tau\to 0}{\lim \inf}\calJ^3_\tau(\btheta_\tau)\leq \underset{\tau\to 0}{\lim \inf}\calJ_\tau(\balpha_\tau).
\end{equation*}
\item \textit{Existence of a recovery sequence}:
Let $(\balpha, \btheta)\in\calH_{ad}\times\calV_{ad}$ be given. Then, the piecewise linear
Lagrange interpolant $(\Pi_{\Dt}\balpha, \Pi_{\Dt}\btheta)$ of $(\balpha, \btheta$)
belongs to $(\calH_{ad}^\tau,\calV_{ad}^\tau)$. Since
$\partialt(\Pi_{\Dt}\balpha) \rightarrow \partialt\balpha$,
$\partialt(\Pi_{\Dt}\btheta) \rightarrow \partialt\btheta$
and $\Pi_\tau\balpha^\top_{\Dt}\hbP_i(\Pi_{\Dt}\btheta_\tau)\Pi_{\Dt}\balpha_\tau\to\balpha^\top\hbP_i(\btheta)\balpha$
in $\rL^2(0,T;\rL^2(\hD))$ because of the box constrains, we obtain
\begin{multline*}
 \underset{\tau\to 0}{\limsup}\calJ_\tau(\Pi_{\Dt}\balpha,\Pi_{\Dt}\btheta)\\
 \le\underset{\tau\to 0}{\limsup}\calJ^1_\tau(\Pi_{\Dt}\balpha,  \Pi_{\Dt}\btheta) +
 \underset{\tau\to 0}{\limsup}\calJ^2_\tau(\Pi_{\Dt}\balpha)
 +\underset{\tau\to 0}{\limsup}\calJ^3_\tau(\Pi_{\Dt}\btheta)\leq\calJ(\balpha,\btheta).
\end{multline*}
\item \textit{$(\bbalpha,\bbtheta)$ is a minimizer for problem \eqref{eq:PJ1} :} We need to show
\begin{equation}\label{balpha_min}
 \calJ(\bv{v}, \bv{w})\geq \calJ(\bbalpha,\bbtheta) \qquad \forall (\bv{v},\bv{w})\in \calH_{ad}\times\calV_{ad} .
\end{equation}
From step 3 there exists $\{ (\bv{v}_\tau,\bv{w}_\tau) \}_{\tau>0}$ such that
$\bv{v}_\tau \rightharpoonup \bv{v}$ and $\bv{w}_\tau \rightharpoonup \bv{w}$ in $\rH^1(0,T)$ and
\begin{align*}
\calJ(\bv{v},\bv{w})
& \ge\underset{\tau\to 0}{\lim \sup}\calJ_\tau(\bv{v}_\tau,\bv{w}_\tau)
\\
& \geq \underset{\tau\to 0}{\liminf} \calJ_\tau(\bv{v}_\tau,\bv{w}_\tau)
\geq \underset{\tau\to 0}{\liminf} \calJ_\tau(\bbalpha_\tau,\bv{w}_\tau)\geq \calJ(\bbalpha,\bbtheta)
\end{align*}
where we have used that $\bbalpha_\tau$ is a minimizer for
$\calJ_\tau$ together with \eqref{eq:Lower_J2}.
\item  \textit{Convergence:} Since $(\bbalpha_\tau,\bbtheta_\tau)$ is a minimizer we deduce
the inequality $\calJ_\tau(\bbalpha_\tau,\bbtheta_\tau)\leq \calJ_\tau(\Pi_\tau\bbalpha,\Pi_\tau\bbtheta)$, whence
applying first step 3 and next step 2 we see that
\[
\underset{\tau\to 0}{\lim\sup}\calJ_\tau(\bbalpha_\tau,\bbtheta_\tau)
\leq\underset{\tau\to 0}{\lim\sup}\calJ_\tau(\Pi_\tau\bbalpha,\Pi_\tau\bbtheta)
\leq \calJ(\bbalpha,\bbtheta)
\leq \underset{\tau\to 0}{\liminf} \calJ_\tau(\bbalpha_\tau,\bbtheta_\tau).
\]
This implies $\lim_{\tau\to 0}\calJ_\tau(\bbalpha_\tau,\bbtheta_\tau)=\calJ(\bbalpha,\bbtheta)$. In addition
 $\left\{\bbalpha_{\tau},\bbtheta_{\tau}\right\}_{\tau>0}$ converges $\rL^2$-strongly and
 $\rH^1$-weakly to $(\bbalpha,\bbtheta)$, a  minimizer of \eqref{eq:PJ1}.
\end{enumerate}
This concludes the proof.
\end{proof}

\subsection{Problem 2: Controlling intensities and positions}\label{s:contr_pos}

Like in the previous section,  we consider the fixed domain $\hD$
and  rewrite  $\calF$ (cf.~\eqref{eq:calF}) as
\begin{align}\label{eq:calF_D}
\calF(\balpha,\bphi)=&\dfrac{1}{2}\sum_{i=1}^\dd\int_0^T\|\balpha^\top
\widehat{\mathbb{G}}_i(\bphi)\balpha-\hbvel_{i}\|^2_{\rL^2(\hD)}
+\dfrac{\lambda}{2}\int_0^T|\partialt\balpha|^2+\dfrac{\beta}{2}\int_0^T|\partialt\bphi|^2
\end{align}
with $\widehat{\mathbb{G}}_i(\bphi,\hbx):=\mathbb{G}_i(\bphi,\bX(t,\hbx))\psi(t)^{\dd/2}$, $i=1,\dots, \dd$.

Next, we consider the previous definition $\calF$ in order to introduce a time
discretization of  problem \eqref{eq:PJ2}. Let us fix $N\in \N$ and let $\Dt := T/N$ be the time step.
Then we consider the following time discrete version of \eqref{eq:PJ2}: given
the initial condition $(\balpha_0,\bphi_0)=:(\bbalpha_\tau(0),\bbphi_\tau(0))$,
find $(\bbalpha_\tau,\bbphi_\tau) \in \calH^\tau_{ad}\times\calU^\tau_{ad}$ solving
\begin{align}
\label{eq:PJ2_disc}
(\bbalpha_\tau,\bbphi_\tau)=\argmin_{(\balpha_\tau,\bphi_\tau)\in\calH^\tau_{ad}\times \calU^\tau_{ad} }
\calF_\tau(\balpha_\tau,\btheta_\tau),
\end{align}
where $\calU^\tau_{ad}=\mathbb{P}^1_\tau\cap \calU_{ad}$,
\[
\calF_\tau(\balpha_\tau,\btheta_\tau) :=\calF^1_\tau(\balpha_\tau,\bphi_\tau)
+\calF^2_\tau(\balpha_\tau)+\calF^3\bphi_\tau),
\]
and
\begin{align*}
&\calF^1_\tau(\balpha_\tau,\btheta_\tau) :=
\Dt\sum_{n=1}^N\dfrac{1}{2}\sum_{i=1}^\dd\|(\balpha^n_\tau)^\top
\widehat{\mathbb{G}}^n_i(\btheta^n_\tau)\balpha^n_\tau-\hbvel^n_{i}\|^2_{\rL^2(\hD)} \\
&\calF^2_\tau(\balpha_\tau) :=
\Dt\sum_{n=1}^N\dfrac{\lambda}{2\Dt^2}|\balpha^n_\tau-\balpha^{n-1}_\tau|^2, \qquad
\calF^3_\tau(\btheta_\tau) :=
\Dt\sum_{n=1}^N\dfrac{\eta}{2\Dt^2}|\bphi^n_\tau-\bphi^{n-1}_\tau|^2.
\end{align*}
Moreover, it is straightforward to prove the following convergence result.
\begin{theorem}[Problem 2: convergence to minimizers]
The family of  minimizers $\left\{(\bbalpha_{\tau},\bbphi_{\tau})\right\}_{\tau>0}$ to
\eqref{eq:PJ2_disc} is uniformly bounded in  $[\rH^1(0,T)]^2$
and it contains a subsequence that converges weakly to
$(\bbalpha,\bbphi)$ in $[\rH^1(0,T)]^2$, a solution to the minimization problem \eqref{eq:PJ2},
and $\lim_{\tau\to 0}\calF_\tau(\bbalpha_\tau,\bbphi_\tau)=\calF(\bbalpha,\bbphi)$.
\end{theorem}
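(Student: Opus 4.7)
The plan is to mirror the $\Gamma$-convergence argument used in Theorem~\ref{thm:Gconv}, substituting the smoothness of the direction maps $\widehat{\bv{d}}_i$ by the $C^1$-regularity of the trajectory parametrizations $\brho_i:[\bphi_*(i),\bphi^*(i)]\to\calC_i$. A key preliminary observation is that, since $\brho_i(\phi_i)\in\R^\dd\setminus\overline{\O}$ for every $\phi_i$ in the compact admissible interval, $\operatorname{dist}(\brho_i([\bphi_*(i),\bphi^*(i)]),\overline{\O})>0$. This secures that $|\bx-\brho_i(\phi_i)|$ is uniformly bounded below on $\overline{\O}$, making the entries of $\bR_i(\phi_i)$ and their spatial derivatives Lipschitz continuous in $\phi_i$ and uniformly bounded on $\overline{\hD}$. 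Consequently, the matrix field $\widehat{\mathbb{G}}_i(\bphi,\hbx)=\mathbb{G}_i(\bphi,\bX(t,\hbx))\psi(t)^{\dd/2}$ depends Lipschitz-continuously on $\bphi$ in $\rL^2(0,T;\rL^2(\hD))$ on the admissible box.

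I would then proceed through the five steps of Theorem~\ref{thm:Gconv}. For \emph{boundedness}, I test minimality of $(\bbalpha_\tau,\bbphi_\tau)$ against the constant sequence $(\balpha_0,\bphi_0)\in\calH_{ad}^\tau\times\calU_{ad}^\tau$; since $\lambda,\beta>0$ and the first term of $\calF_\tau(\balpha_0,\bphi_0)$ is bounded by the preliminary observation, this yields a uniform $[\rH^1(0,T)]^2$-bound on $\{(\bbalpha_\tau,\bbphi_\tau)\}$. Extracting a weakly convergent (not relabeled) subsequence and invoking the closedness of $\calH_{ad}\times\calU_{ad}$ under weak $\rH^1$-convergence produces a limit $(\bbalpha,\bbphi)\in\calH_{ad}\times\calU_{ad}$. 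For the \emph{liminf inequality}, let $(\balpha_\tau,\bphi_\tau)\rightharpoonup(\balpha,\bphi)$ in $[\rH^1(0,T)]^2$; by the compact embedding $\rH^1(0,T)\hookrightarrow C([0,T])$ the convergence of piecewise-constant interpolates $\overline{\Pi}_\tau\balpha_\tau,\overline{\Pi}_\tau\bphi_\tau$ to $\balpha,\bphi$ is uniform. Combining this with the Lipschitz dependence of $\widehat{\mathbb{G}}_i$ on $\bphi$ and the $\rL^\infty$-bound on $\balpha_\tau$ (inherited from $\calH_{ad}$) produces, in analogy with \eqref{eq:PJ1_1conv},
\[
\overline{\Pi}_\tau\balpha_\tau^\top\widehat{\mathbb{G}}_i(\overline{\Pi}_\tau\bphi_\tau)\overline{\Pi}_\tau\balpha_\tau
\to \balpha^\top\widehat{\mathbb{G}}_i(\bphi)\balpha \quad \text{in } \rL^2(0,T;\rL^2(\hD)),
\]
whence $\calF^1_\tau(\balpha_\tau,\bphi_\tau)\to\calF^1(\balpha,\bphi)$. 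Weak lower semicontinuity of the $\rL^2(0,T)$-seminorms of $\partialt\balpha_\tau,\partialt\bphi_\tau$ controls the remaining two terms, and \eqref{eq:Lower_J2}'s analogue $\calF(\balpha,\bphi)\le\liminf_{\tau\to 0}\calF_\tau(\balpha_\tau,\bphi_\tau)$ follows.

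For the \emph{recovery sequence}, I take the piecewise linear Lagrange interpolant $(\Pi_\tau\balpha,\Pi_\tau\bphi)\in\calH_{ad}^\tau\times\calU_{ad}^\tau$, whose box-constraint preservation is immediate and whose derivatives converge strongly in $\rL^2(0,T)$. Standard interpolation and the same Lipschitz dependence of $\widehat{\mathbb{G}}_i$ yield $\limsup_{\tau\to 0}\calF_\tau(\Pi_\tau\balpha,\Pi_\tau\bphi)\le\calF(\balpha,\bphi)$. Combining the three ingredients with the discrete minimality of $(\bbalpha_\tau,\bbphi_\tau)$ exactly as in step 4 of Theorem~\ref{thm:Gconv} shows $\calF(\bv v,\bv w)\ge\calF(\bbalpha,\bbphi)$ for every $(\bv v,\bv w)\in\calH_{ad}\times\calU_{ad}$, so $(\bbalpha,\bbphi)$ solves \eqref{eq:PJ2}. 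Finally, a sandwich argument using the recovery sequence $(\Pi_\tau\bbalpha,\Pi_\tau\bbphi)$ on the upper side and the liminf inequality on the lower side delivers $\lim_{\tau\to 0}\calF_\tau(\bbalpha_\tau,\bbphi_\tau)=\calF(\bbalpha,\bbphi)$.

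The only step I expect to require any care is the continuity of $\bphi\mapsto\widehat{\mathbb{G}}_i(\bphi)$ in $\rL^2(0,T;\rL^2(\hD))$: one must argue that the first-order $\bx$-derivatives of $\mR^\top(\bphi)\mR(\bphi)$ remain uniformly bounded and Lipschitz in $\bphi$, which hinges crucially on the uniform positive distance between the trajectories $\calC_i$ and $\overline{\O}$ together with $\brho_i\in C^1$. Once this is in place, the rest of the proof is essentially verbatim that of Theorem~\ref{thm:Gconv}, with $(\btheta,\widehat{\bv{d}}_i,\hbP_i,\eta,\calJ)$ replaced by $(\bphi,\brho_i,\widehat{\mathbb{G}}_i,\beta,\calF)$.
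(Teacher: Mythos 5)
Your proposal is correct and follows exactly the route the paper intends: the paper omits the proof, stating only that it is ``straightforward'' by the same five-step $\Gamma$-convergence technique of Theorem~\ref{thm:Gconv} with the $C^1$ regularity of $\brho_i$ replacing the smoothness of $\widehat{\bv{d}}_i$. Your explicit observation that the compact trajectories $\brho_i([\bphi_*(i),\bphi^*(i)])$ keep a positive distance from $\overline{\O}$, which is what makes $\bphi\mapsto\widehat{\mathbb{G}}_i(\bphi)$ Lipschitz and uniformly bounded, is precisely the point the paper leaves implicit.
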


\section{Numerical example}\label{sec:num_dip}

In this section we illustrate the performance of the proposed discrete
schemes  \eqref{eq:PJ1_disc} and \eqref{eq:PJ2_disc}.
We consider the approximation of two constant vector fields $\bvel$  on $D_t$.
For all examples $\Omega\subset \R^2$ is a ball of unit radius centered at
$(0,0)$ and the dipoles belong to a ball of  radius 1.2 centered at $(0,0)$.

To solve the minimization problem, we use projected BFGS with Armijo  line search
\cite{K1999}; alternative strategies such as semi-smooth Newton \cite{HIK2002} can be immediately applied as well.
The algorithm terminates when the $l^2$-norm of the projected gradient
is less or equal to $10^{-6}$. In addition to the initial condition $(\balpha_0,\btheta_0)\in \R^{2\nd}$
for problem \eqref{eq:PJ1_disc}
or $(\balpha_0,\bphi_0)\in \R^{2\nd}$ for \eqref{eq:PJ2_disc}, the optimization algorithm requires
an initial guess $(\bbalpha,\bbtheta)$ and $(\bbalpha,\bbphi)\in \R^{2N\nd}$, for \eqref{eq:PJ1_disc} and \eqref{eq:PJ2_disc}, respectively.
Following \cite{ANV2016} we propose below Algorithm \ref{alg:algorithm_1} in order to compute an initial guess
$(\balphaI,\bthetaI)\in \calH^\tau_{ad}\times\calV^\tau_{ad}$ for problem \eqref{eq:PJ1_disc}.
A similar strategy can be used for problem \eqref{eq:PJ2_disc}.

Given $\bv{a}_*,\bv{a}^*, \bv{b}_*, \bv{b}^*\in \R^{2N\nd}$ , by $\mbox{Proj}_{[\bv{a}_*,\bv{a}^*, \bv{b}_*, \bv{b}^*]}$
 we denote pointwise projection on the interval $[\bv{a}_*,\bv{a}^*]\times[\bv{b}_*,\bv{b}^*]$
\[
\mbox{Proj}_{[\bv{a}_*,\bv{a}^*,\bv{b}_*,\bv{b}^*]}(\mathbf{x},\mathbf{y})
=\min\left\{(\bv{a}^*,\bv{b}^*),\max\left\{(\mathbf{x},\mathbf{y}),(\bv{a}_*,\bv{b}_*)\right\}\right\},
\]
where $\min$ and $\max$ are interpreted componentwise.
Notice that the above algorithm, related to model predictive control (MPC),
is the so-called finite horizon model
predictive or instantaneous optimization algorithm  \cite{MR1233904,MR1708955,HK1999,AHNSWsub}:
the minimization takes place over one time step only.
This cheaper strategy provide us an ``accurate" initial guess to solve
 \eqref{eq:PJ1_disc}.
\begin{algorithm}[!h]
\caption{: Initialization algorithm}
\begin{algorithmic}[1]

\STATE $\textbf{Input:} \, \balpha_0,\, \btheta_0,\,\balpha_*,\,\btheta_*,\,\balpha^*,\,\btheta^*,\, \lambda,\,\Ds,\, \eta,\,
\hD,\, \verb"tol",\, \hbP^n_i,\, \hbvel^n_{i},\, n=1,\ldots, N,\, i=1,\ldots,\dd $
\STATE Set $(\mathbf{x}^0,\mathbf{y}^0):=(\balpha_0,\btheta_0)$
\FOR{$n=1,\ldots,N$}
    \STATE Solve for $(\mathbf{x},\mathbf{y})\in \R^{2N\nd}$
    \[
    \displaystyle \underset{(\balpha_*,\btheta_*)\leq(\mathbf{x},\mathbf{y})\leq(\balpha^*,\btheta^*)}{\min_{(\mathbf{x},\mathbf{y})\in\R^{2N\nd}}}
    F(\mathbf{x},\mathbf{y})
    \]
    \[
        F(\mathbf{x},\mathbf{y})=
\dfrac{1}{2}\sum_{i=1}^\dd\|\mathbf{x}^\top\hbP^n_i(\mathbf{y})\mathbf{x}-\hbvel^n_{i}\|^2_{\rL^2(\hD)}
+\dfrac{\lambda}{2\Ds^2}|\mathbf{x}-\mathbf{x}^{0}|^2
+\dfrac{\eta}{2\Ds^2}|\mathbf{y}-\mathbf{y}^{0}|^2
\]
 with termination criterion: $|(\mathbf{x}, \mathbf{y})-\mbox{Proj}_{[\balpha_*,\balpha^*,\btheta_*,\btheta^*]}
 ((\mathbf{x}, \mathbf{y})-\nabla F(\mathbf{x},\mathbf{y}))| < \verb"tol"$.
\STATE $\balphaI(n\Ds)=\mathbf{x}$,\, $\bthetaI(n\Ds)=\mathbf{y}$
\STATE $\mathbf{x}^0 \gets \mathbf{x}$,\, $\mathbf{y}^0 \gets \mathbf{y}$
\ENDFOR
\end{algorithmic}
\label{alg:algorithm_1}
\end{algorithm}
%

\subsection{Approximation of a vector field}\label{sec:example1}%

In this section we focus on the approximation of two vectors fields
 $\bvel$ defined on moving domains $D_t$.
 With this aim
we solve the discrete minimization problems \eqref{eq:PJ1_disc} and \eqref{eq:PJ2_disc}.
We will achieve our goal by creating a magnetic force which allows us to ``magnetically inject" nanoparticles inside the domain. 
  We first consider the approximation of a  constant vector field
  $\bvel_1(\bx,t) = (\frac{1}{\sqrt{2}},-\frac{1}{\sqrt{2}})^\top$ by controlling the  directions and intensities
  of the $4$ dipoles.
  For the second example we are interested in the approximation of the field given
  by $\bvel_2(\bx,t)= (1, 0)^\top$ by controlling the positions and intensities
  of $3$ dipoles.
  The moving domains $D_{i,t}$ are such that $D_{i,t}=\bx(t,\hD)$, with
$\bx(t,\hbx)=\bvarphi_i(t)+\hbx$, $i=1,2$.  Here $\bvarphi_1(t)=\frac{0.6}{T}(t, -t)^\top$
  and  $\bvarphi_2(t)=(t, 0)^\top$.
  The trajectories of the barycenters of
$D_{1,t}$ and $D_{2,t}$ are shown in  Figure~\ref{fig:control_domain} (left and right).
For both examples we consider final time $T=0.75$.
In the first case  the reference domain $\hD$ is a ball of radius $0.2$ centered at
$\bx_I=(-0.6,0.6)$ whereas for the second one, $\hD$ is a ball of radius $0.2$ centered at $\bx_I=(-0.75,0)$.
In both cases, $D_{i,t}$ moves from $\bx_I$ to $\bx_F=(0,0)$.
\begin{figure}[h]
\centering
\includegraphics[height=4.5cm]{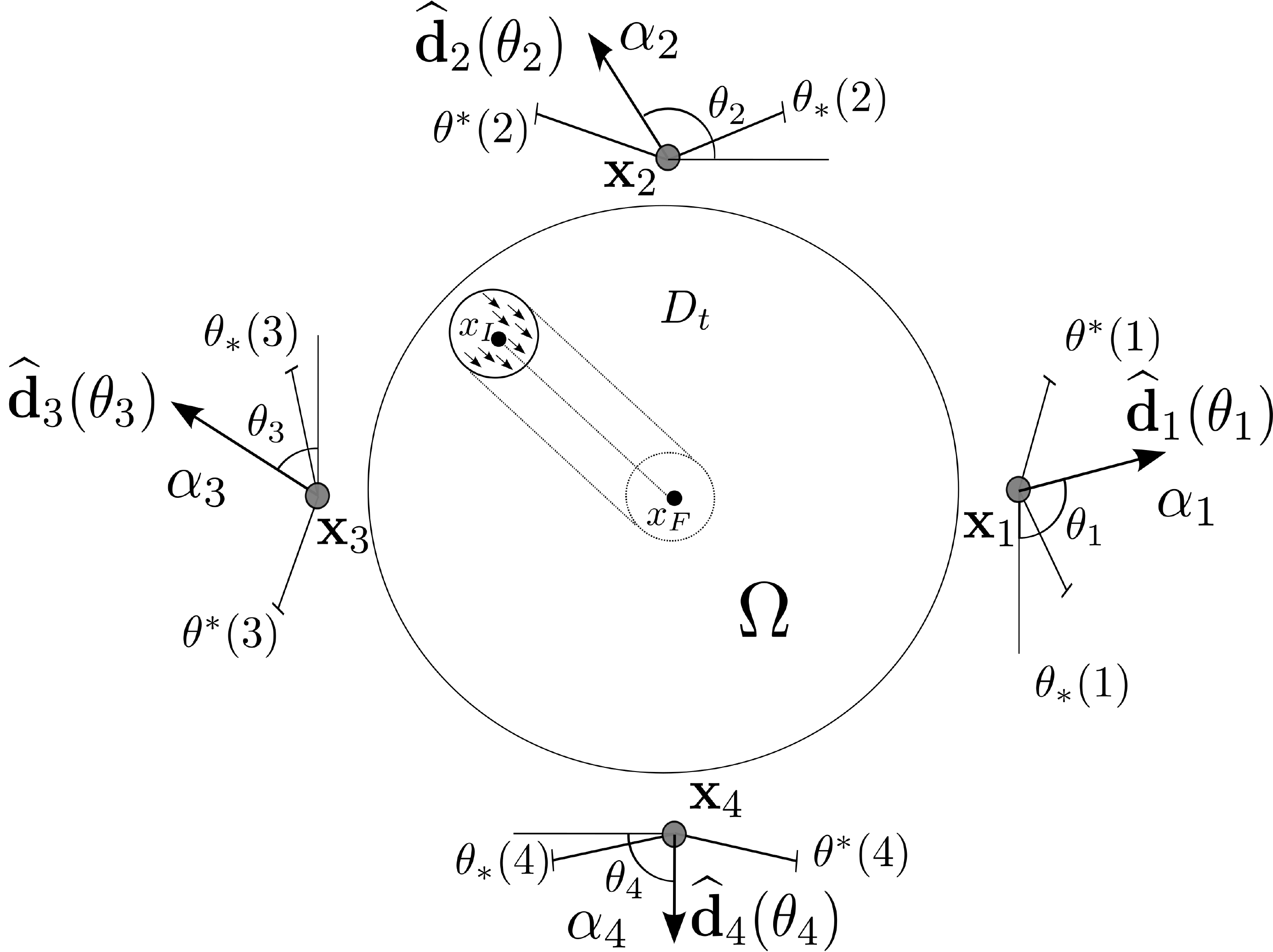}\qquad
\includegraphics[height=4.5cm]{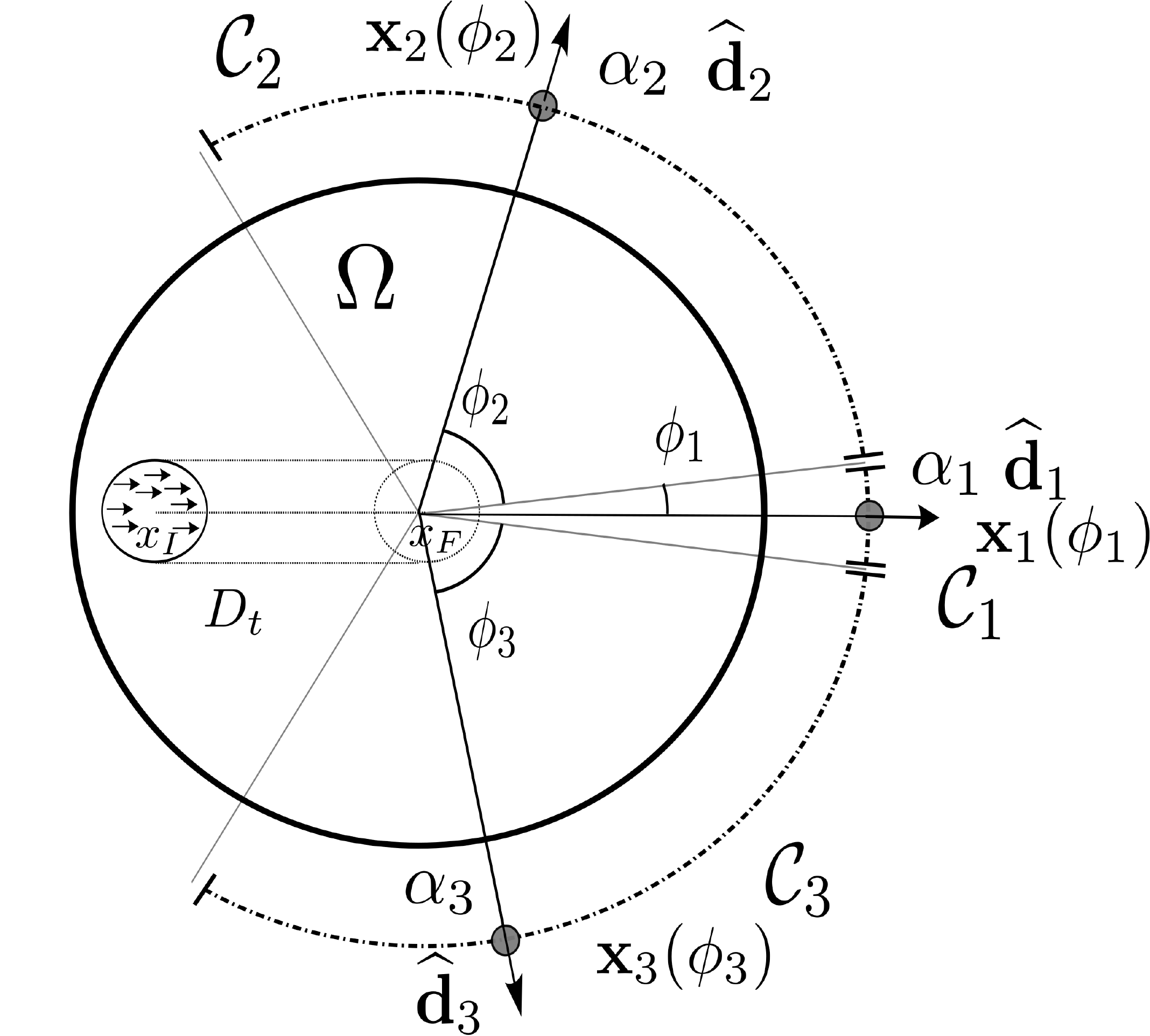}
\caption{Dipole configuration and two different moving subdomains $D_t$ within
$\O\subset\R^2$.  These domains, $D_{1,t}$ (left) and $D_{2,t}$ (right), do not deform, are initialized at $\bx_I$
and travel to their destinations $\bx_F$ along different curves.
The target vector fields $\left\{\bvel_i\right\}_{i=1}^2$, which are tangent to
the curve $\calC$, are  represented by arrows
for the initial configuration.}
\label{fig:control_domain}
\end{figure}
For each of these configurations we have solved the discrete problems with
  $N=100$  time intervals and $\lambda =\eta= 10^{-5}$.
 In order to solve \eqref{eq:PJ1_disc}, we consider an admissible set  characterized
 by initial conditions
 $\balpha_0=(2,0,0,2), \btheta_0=(0,\pi/2,3\pi/2,3\pi/2)\in\R^4$,
 and  by upper and lower bounds
 for the intensities and angles given by:
$\balpha^*=(2,\ldots,2),\balpha_*=(-2,\ldots,-2),\btheta^*=(2\pi,\ldots,2\pi), \btheta_*=(0,\ldots,0)\in\R^4$, respectively.
The initial guesses $(\balphaI,\bthetaI)$ are computed  by Algorithm~\ref{alg:algorithm_1} with  $\verb"tol"=10^{-3}$
with a total of 525 iterations. Notice that, at each time step $n$,  the iterations of the minimization problem in Algorithm~\ref{alg:algorithm_1}
depends on $\nd$ unknowns.
We recall that, due to the non-convexity of the cost functional $\calJ$ (and $\calJ_\tau$), we may converge
to different local minima  depending on the choice of the initial guess.

The solution $\bbalpha_\tau(t)=(\bar{\alpha}_{i,\tau}(t))_{i=1}^4, \bbtheta_\tau(t)=(\bar{\theta}_{i,\tau}(t))_{i=1}^4$
of problem \eqref{eq:PJ1_disc} with initial guesses  given
by  $(\balphaI,\bthetaI)$  is depicted in Figure~\ref{fig:control_ini_dir}.
\begin{figure}
\centering
\includegraphics[height=4cm]{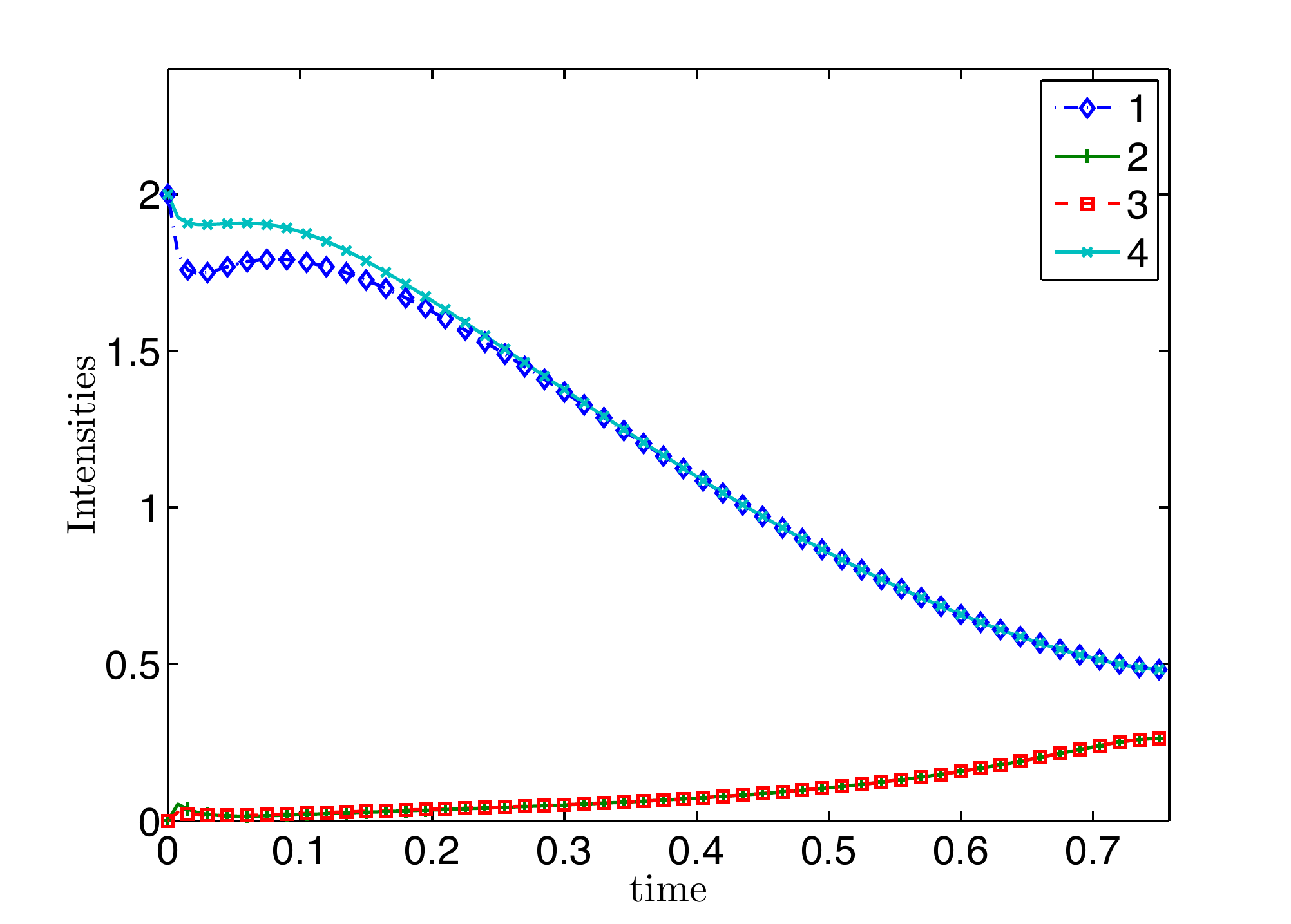}\qquad
\includegraphics[height=4cm]{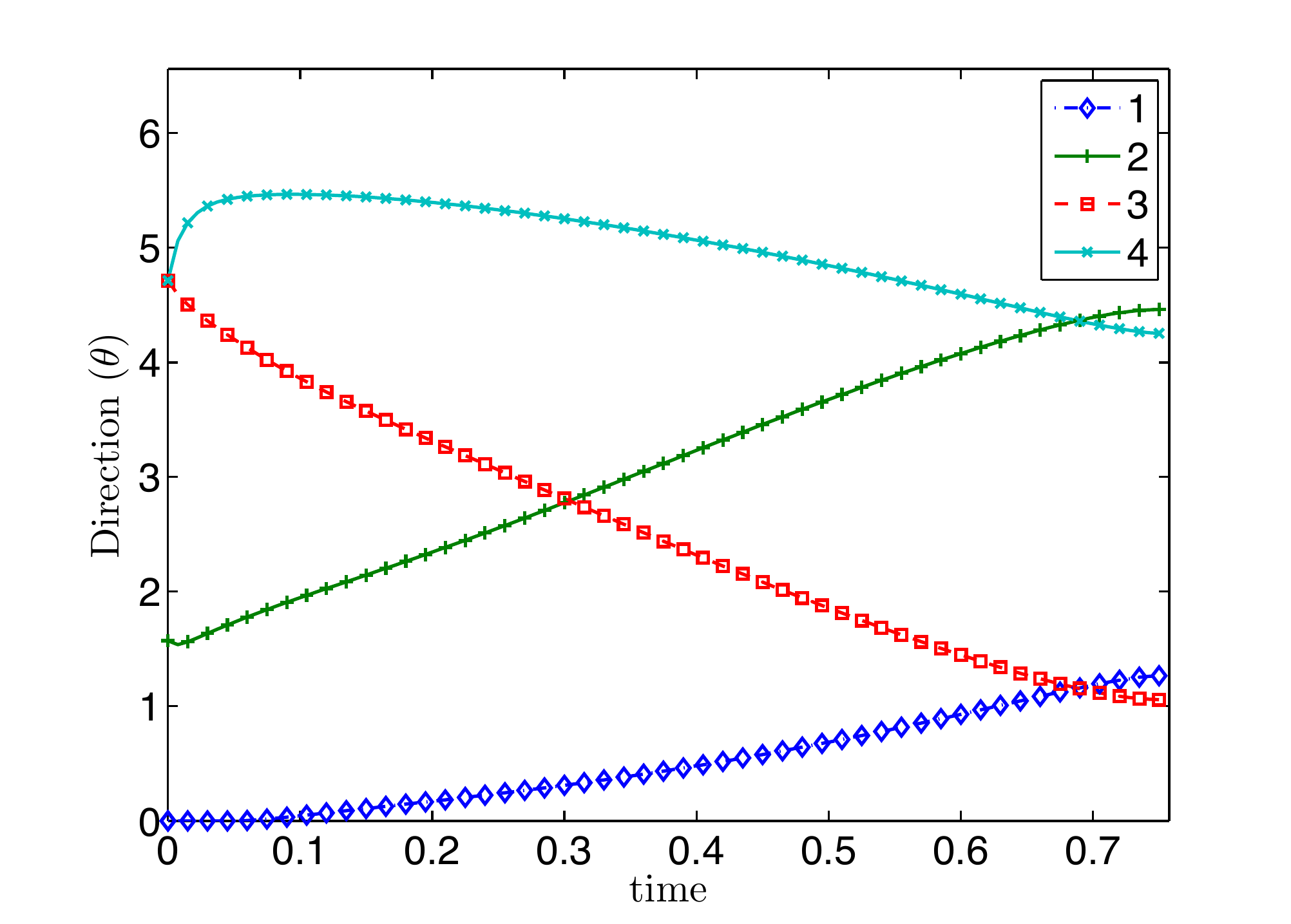}
\caption{Optimal solution $(\bbalpha_\tau,\bbtheta_\tau)=(\bar{\alpha}_{i,\tau},\bar{\theta}_{i,\tau})_{i=1}^4$ to problem \eqref{eq:PJ1_disc}.
The evolution of the intensities (left) and direction, characterized by the angle $\theta $(right) are shown for each dipole $i=1,\ldots, 4$.}
\label{fig:control_ini_dir}
\end{figure}
From Figure~\ref{fig:control_ini_dir} (left) we observe that  the optimal intensity $\balpha_i$ due to $i^{th}$ dipole, is smaller
 when $D_t$ is close to the $i^{th}$ dipole and $\bvel$ is pointing in a  direction opposite to the dipole position.
This is due to the fact that the magnetic forces on the boundary of the domain $\Omega$ are much higher
than inside $\Omega$, thus making it difficult  to approximate the magnetic force when $D_t$
is close to the boundary.
In particular, it can be seen that the intensities of dipole 2 and 3 are small when $t$ is close to 0.
Such a behavior is expected because $D_{1,t}$ is close
 to the boundary of $\Omega$, where the magnetic field generated by these dipoles is large, thus  it is
 difficult for dipoles 2 and 3 to ``push" in the  $\bvel_1$ direction. We also notice that dipoles 1 and 4,
 which can create an attractive field in the  $\bvel_1$ direction, have the
 largest intensities  at initial times and decrease when the time is close to $0.75$. Figure~\ref{fig:control_ini_dir} (right)
shows the dynamics of the angles $(\bar{\theta}_{i,\tau}(t))_{i=1}^4$.

Figure~\ref{fig:ex_dir_line} shows the approximate field and dipole configuration for three time instances
$t=  0.007, 0.375,   0.75$.  Here,  the magnitude of magnetic force $|\nabla|\bH|^2|$  in logarithmic scale
(in the background) and the magnetic force represented by arrows are depicted. The bottom figures  show
 the dipoles direction represented  by the arrows outside the domain $\Omega$.
 The top figures
 illustrate the normalized magnetic force  in $\Omega$, but in the bottom figures the force is restricted to $D_t$.
It can be seen that the magnetic force is close (i.e., almost constant) to $\bvel_1$ in $D_{1,t}$ as expected
whereas it is quite far from constant in the entire domain.
\begin{figure}
\centering
\hfil
\includegraphics[width=0.255\linewidth]{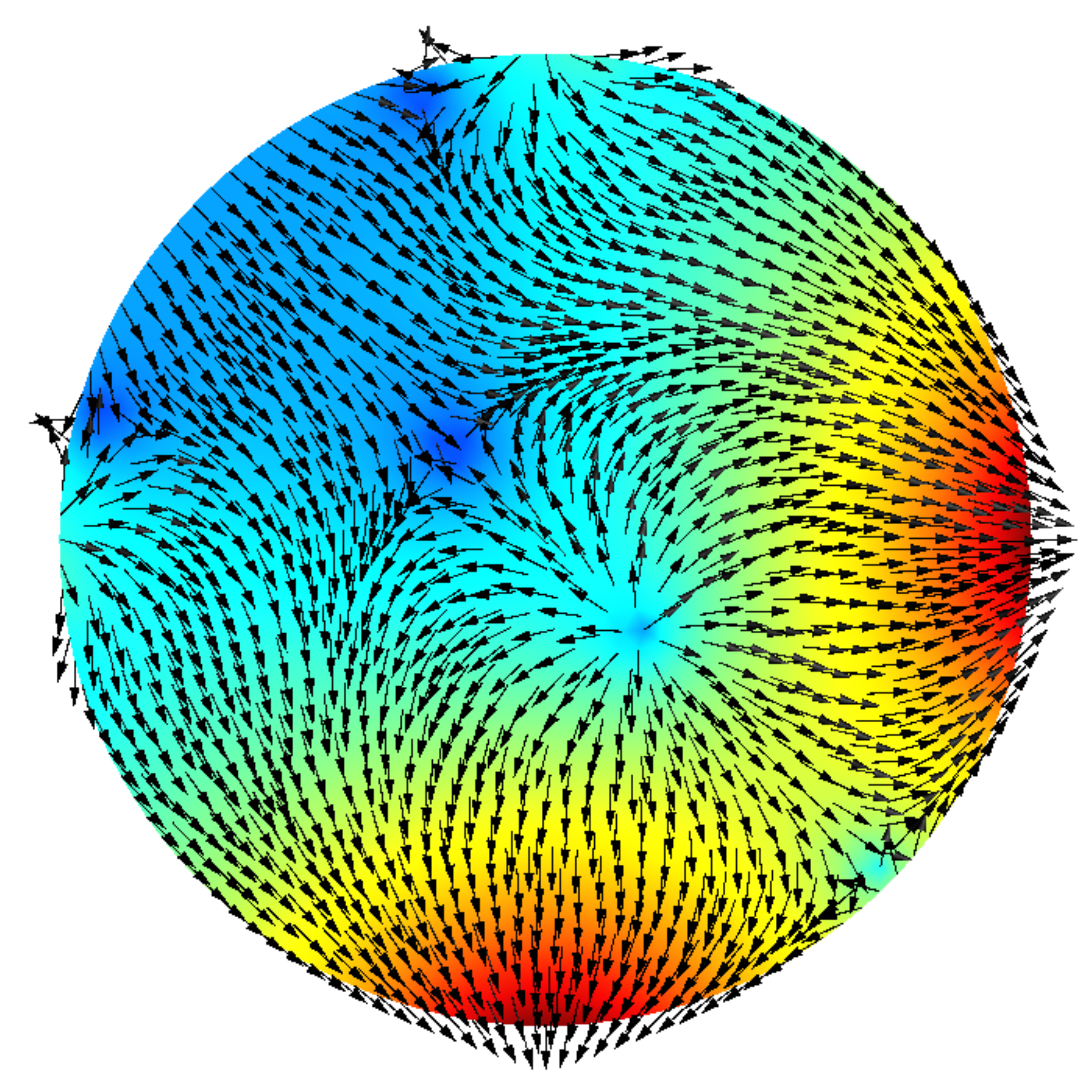}
\hskip1.cm
\includegraphics[width=0.255\linewidth]{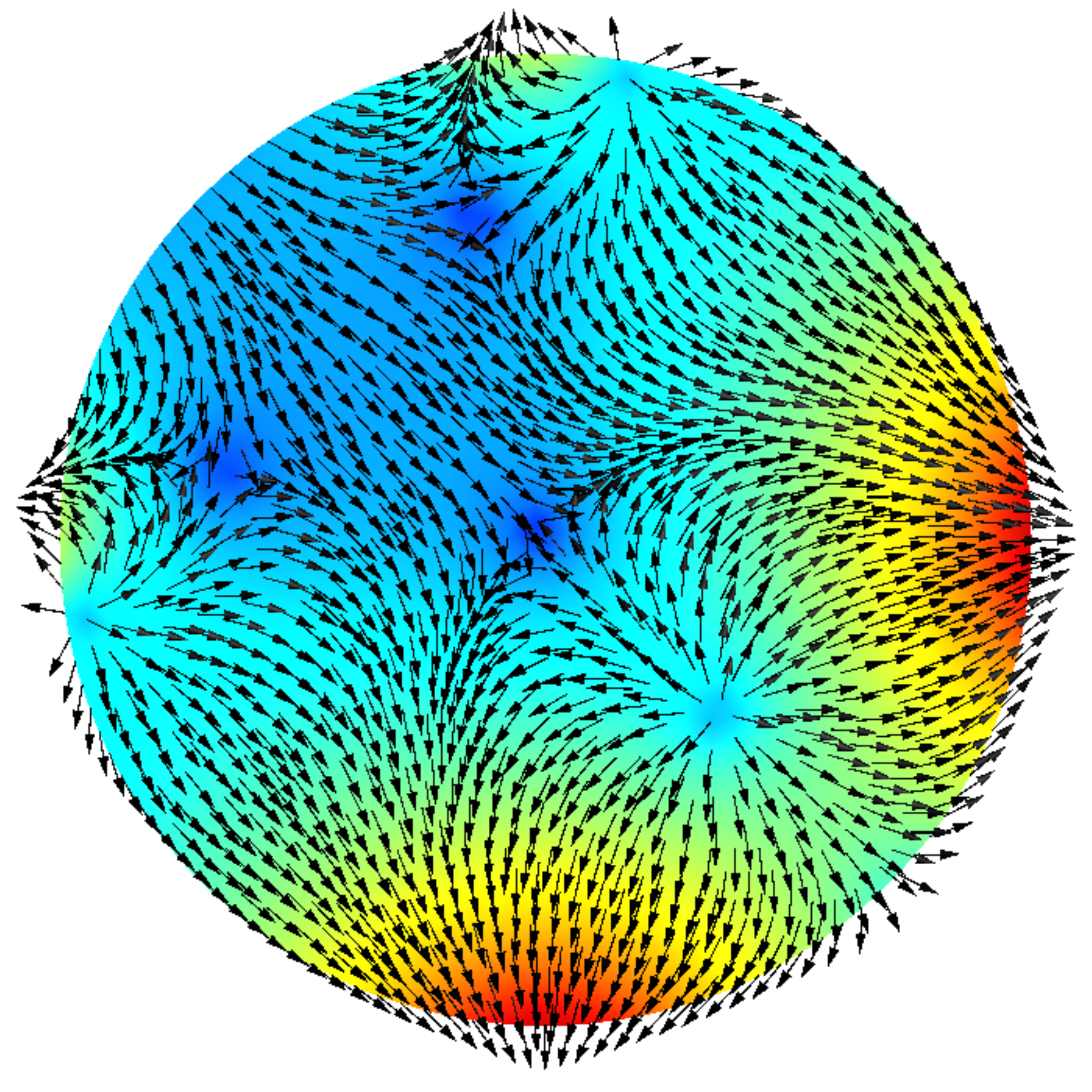}
\hskip1.cm
\includegraphics[width=0.255\linewidth]{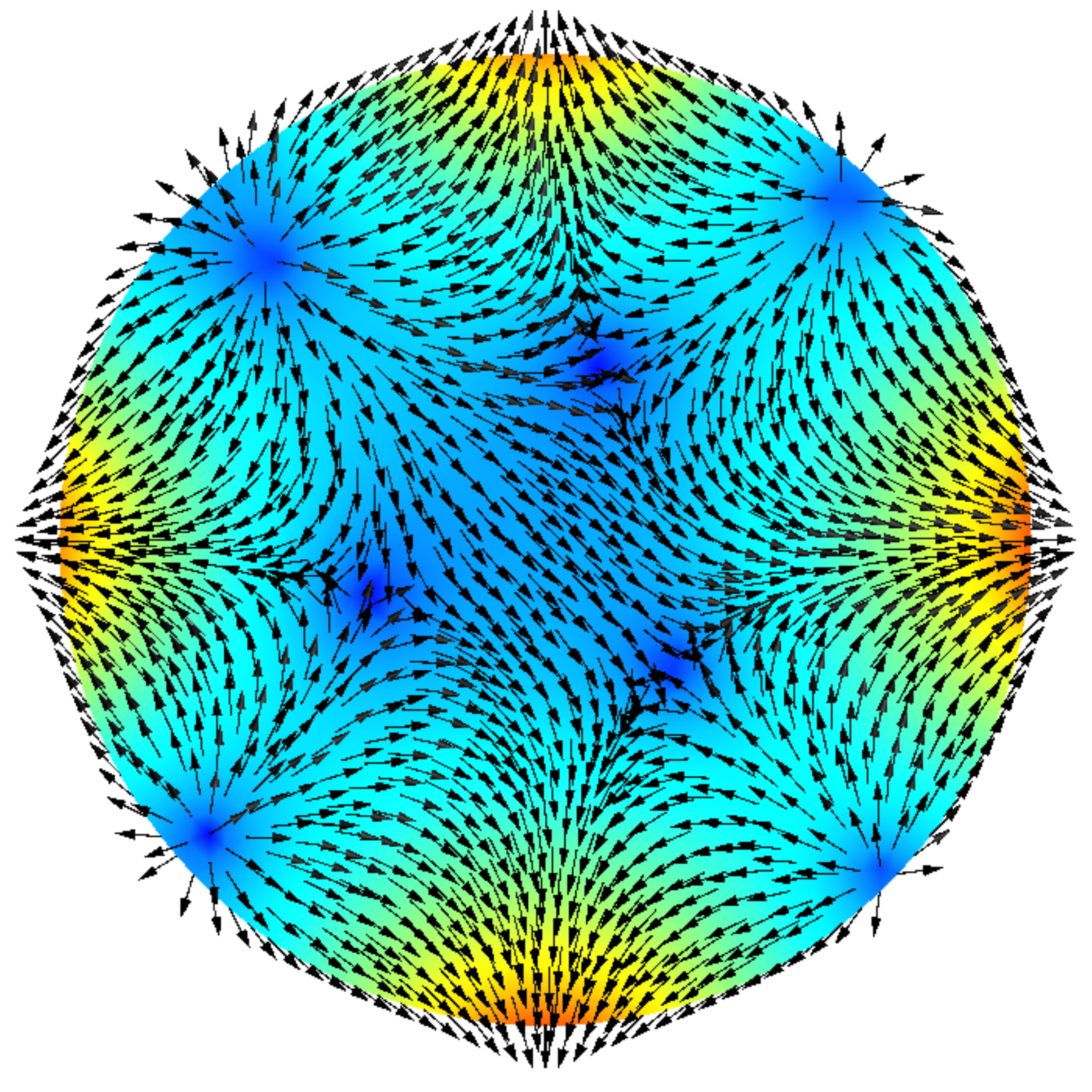}\\
\includegraphics[width=0.325\linewidth]{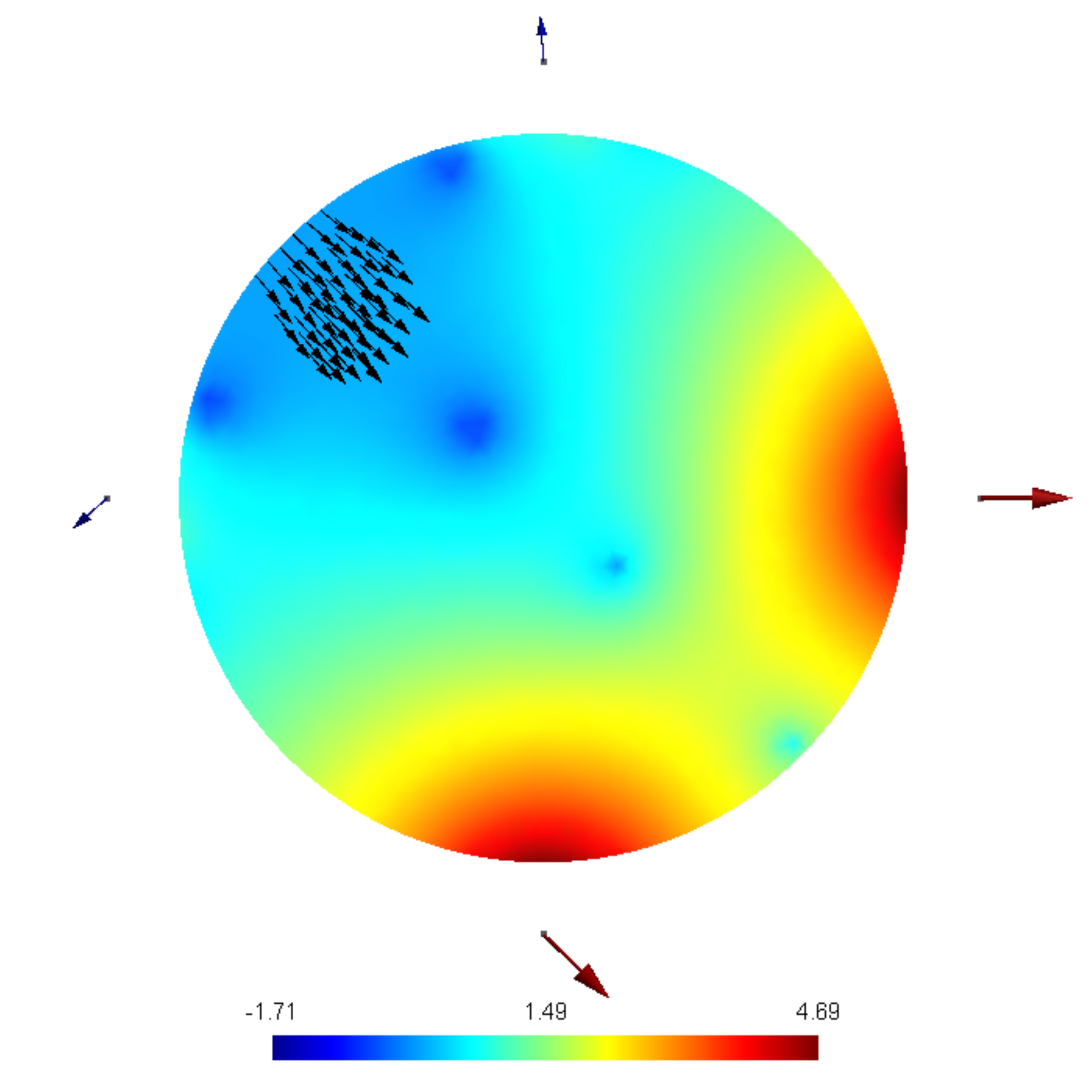}
\includegraphics[width=0.325\linewidth]{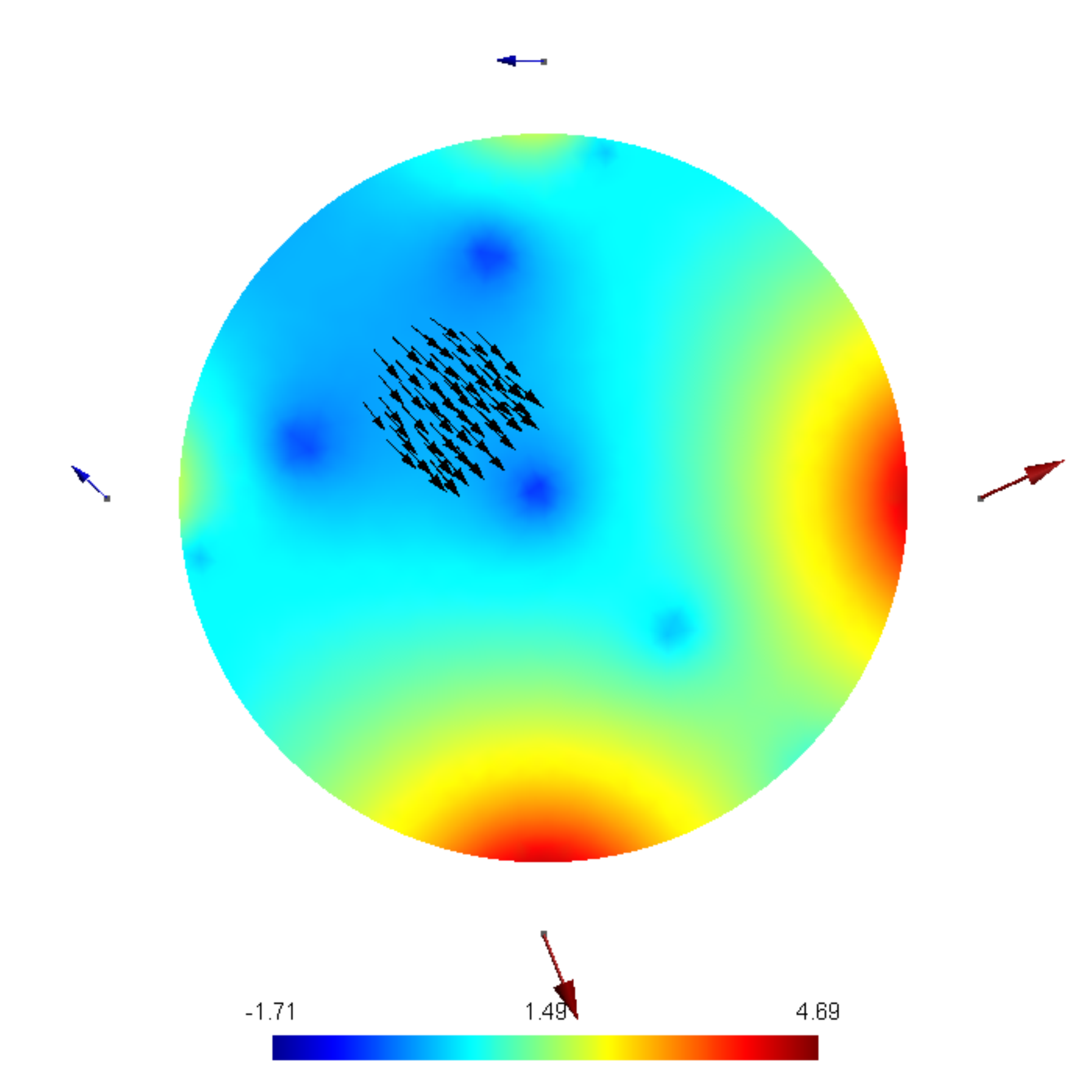}
\includegraphics[width=0.325\linewidth]{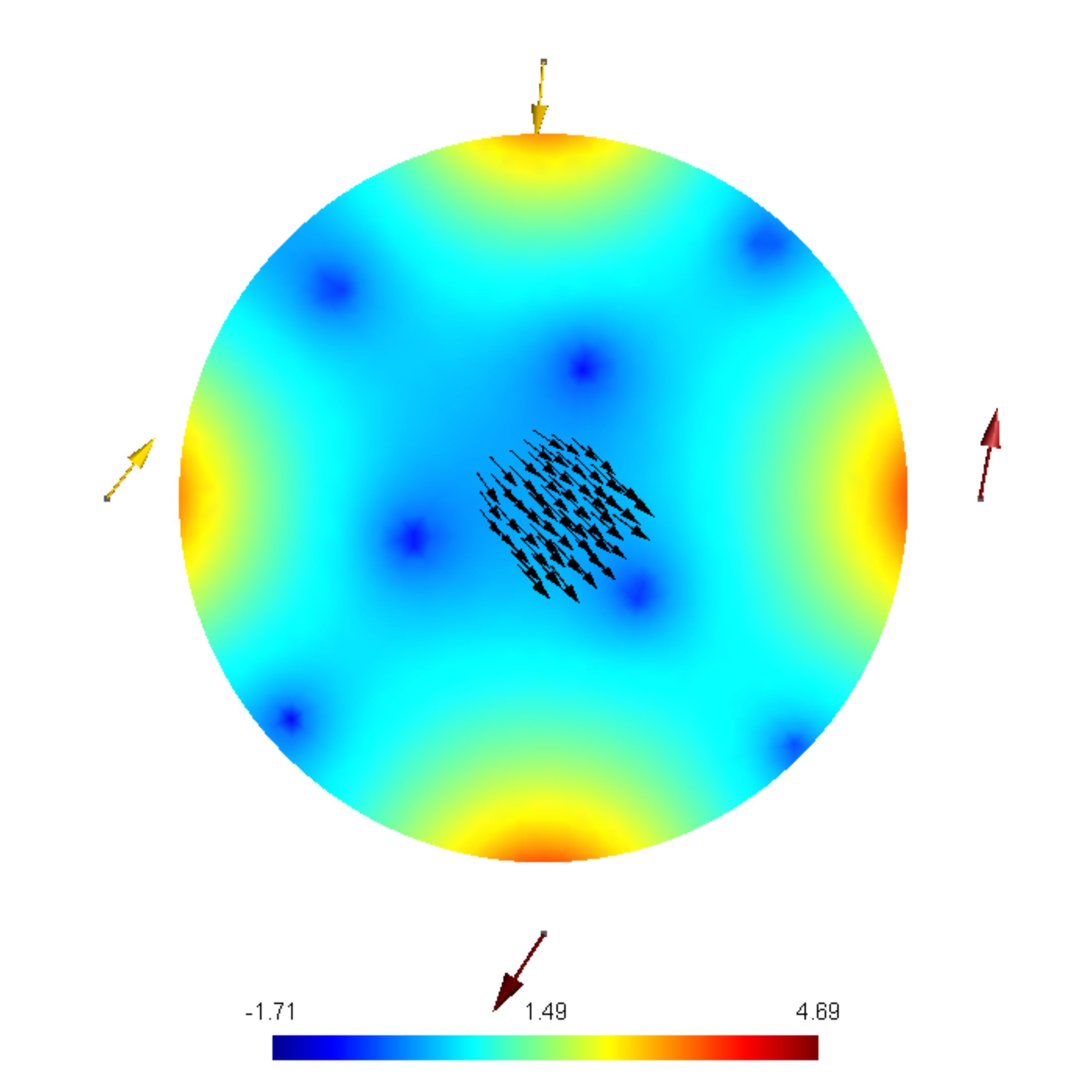}
\caption{Magnetic force solution to the minimization problem \eqref{eq:PJ1_disc} with
$D_{1,t}$ and vector field $\bvel_1$ (force fields shown by black arrows).
Top figure shows the normalized force  at three different times
$t= 0.007, 0.375$ and $0.75$ from left to right, respectively.
In the bottom figure  the magnetic forces are shown only on $D_{1,t}$ for the same time instances.
The arrows outside $\Omega$ represent the dipoles with fixed position and variable direction.
The magnetic force magnitude  $|\nabla|\bH|^2|$ is shown by the background coloring on a logarithmic scale.}
\label{fig:ex_dir_line}
\end{figure}

Next, we approximate the vector field $\bvel_2$ defined on $D_{2,t}$. Here,
given that we want to create a uniform field $\bvel_2(\bx,t) = (1, 0)^\top$
on a domain which moves from left to the center of $\Omega$ (see Figure~\ref{fig:control_domain} (right)),
we restrict the position of the dipoles to
the right of the domain. Namely, we assume that each dipole $i$ moves along a prescribed
trajectory $\calC_i=1.2[\cos(\phi_i),\sin(\phi_i)], i=1,2,3$,  where $\phi_1(t)\in (-\pi/90,\pi/90)$,
$\phi_2(t)\in (\pi/90,3\pi/4)$ and $\phi_3(t)\in (-3\pi/4,-\pi/90)$.
We solve \eqref{eq:PJ2_disc} for an admissible set  characterized
 by initial conditions
 $\balpha_0=(-2,0,0), \bphi_0=(0,2\pi/3,4\pi/3)\in\R^3$,
 and  by upper and lower bounds
 for the intensities and angles given by:
$\balpha^*=(2,2,2),\balpha_*=(-2,-2,-2),\bphi^*=(\pi/90,3\pi/4,2\pi), \btheta_*=(-\pi/90,\pi/90,5\pi/4)\in\R^3$, respectively.

\begin{figure}
\centering
\includegraphics[height=4cm]{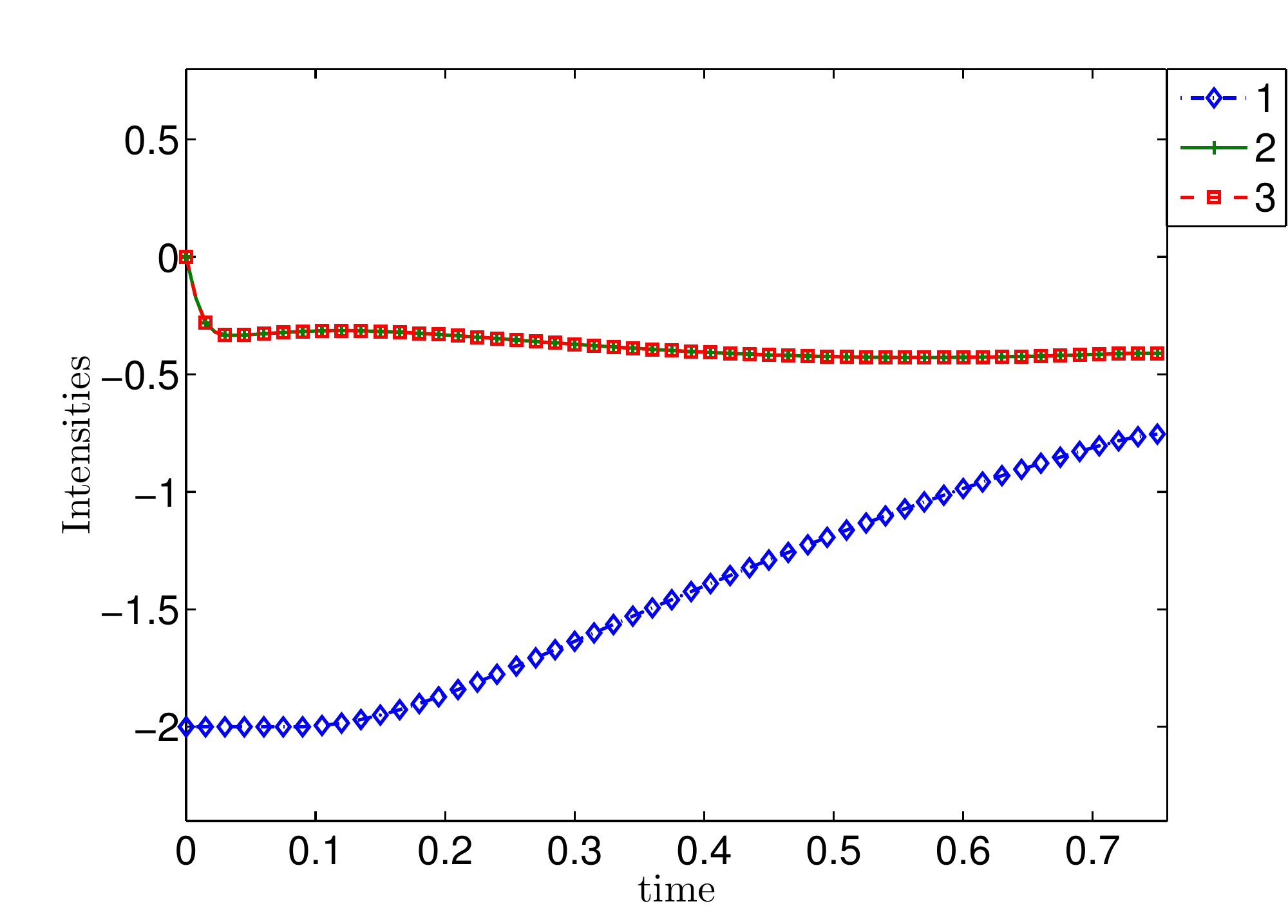}\qquad
\includegraphics[height=4cm]{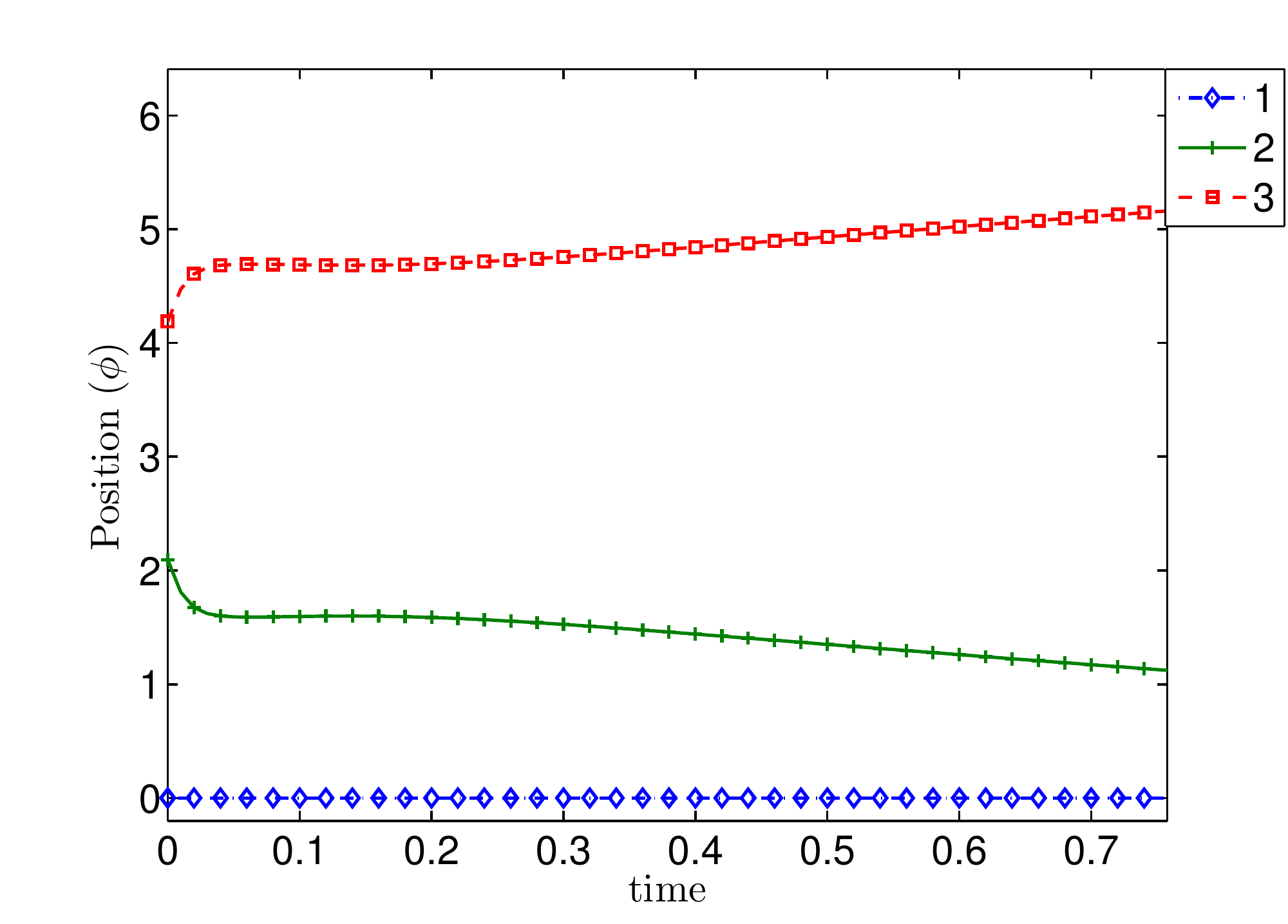}
\caption{Optimal solution $(\bbalpha_\tau,\bbphi_\tau)=(\bar{\alpha}_{i,\tau},\bar{\phi}_{i,\tau})_{i=1}^3$ to problem \eqref{eq:PJ2_disc}.
The evolution of the intensities (left) and position, characterized by the angle $\phi$, (right) are shown for each dipole $i=1,\ldots, 3$.}
\label{fig:control_ini_pos}
\end{figure}

\begin{figure}
\centering
\hfil
\includegraphics[width=0.255\linewidth]{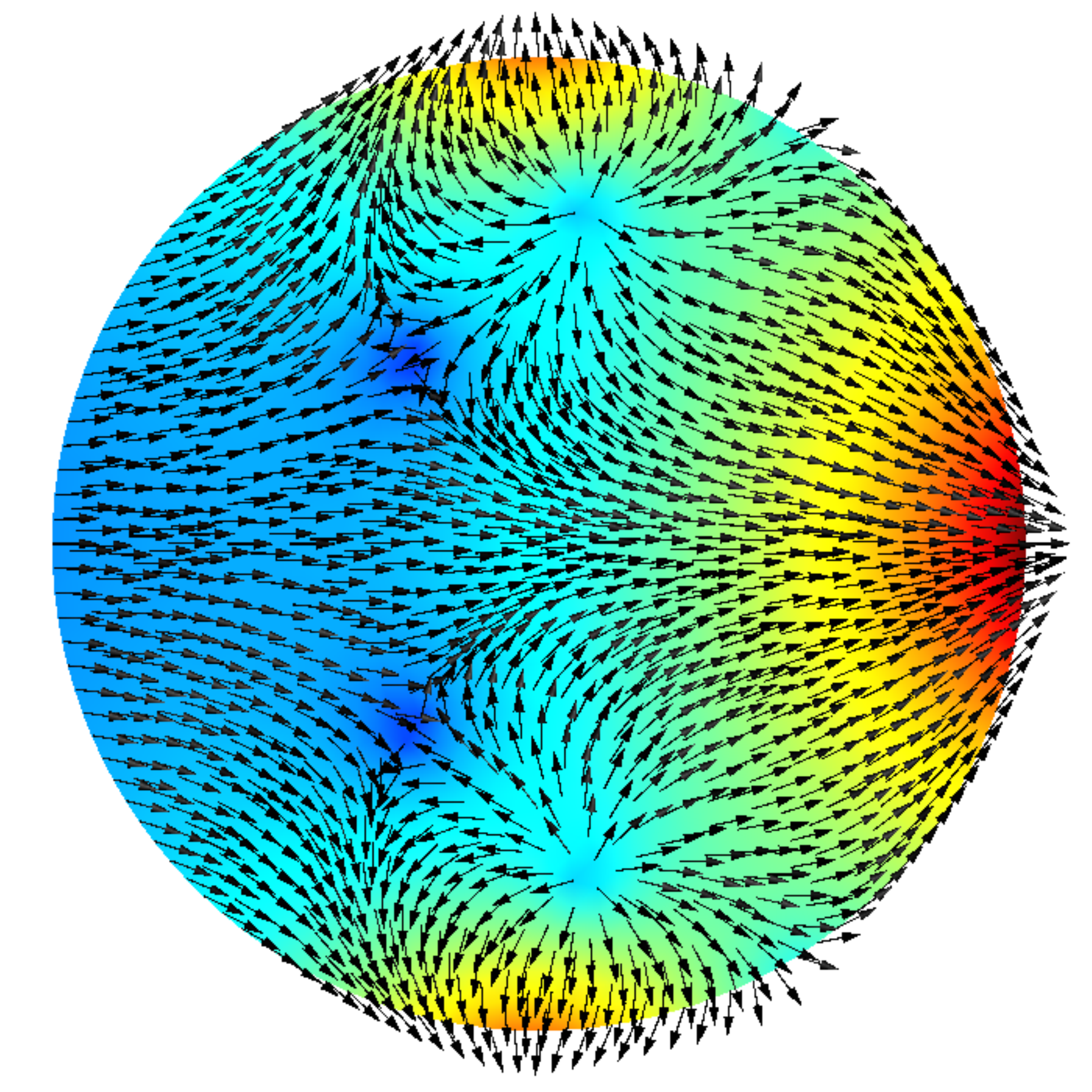}
\hskip1.cm
\includegraphics[width=0.255\linewidth]{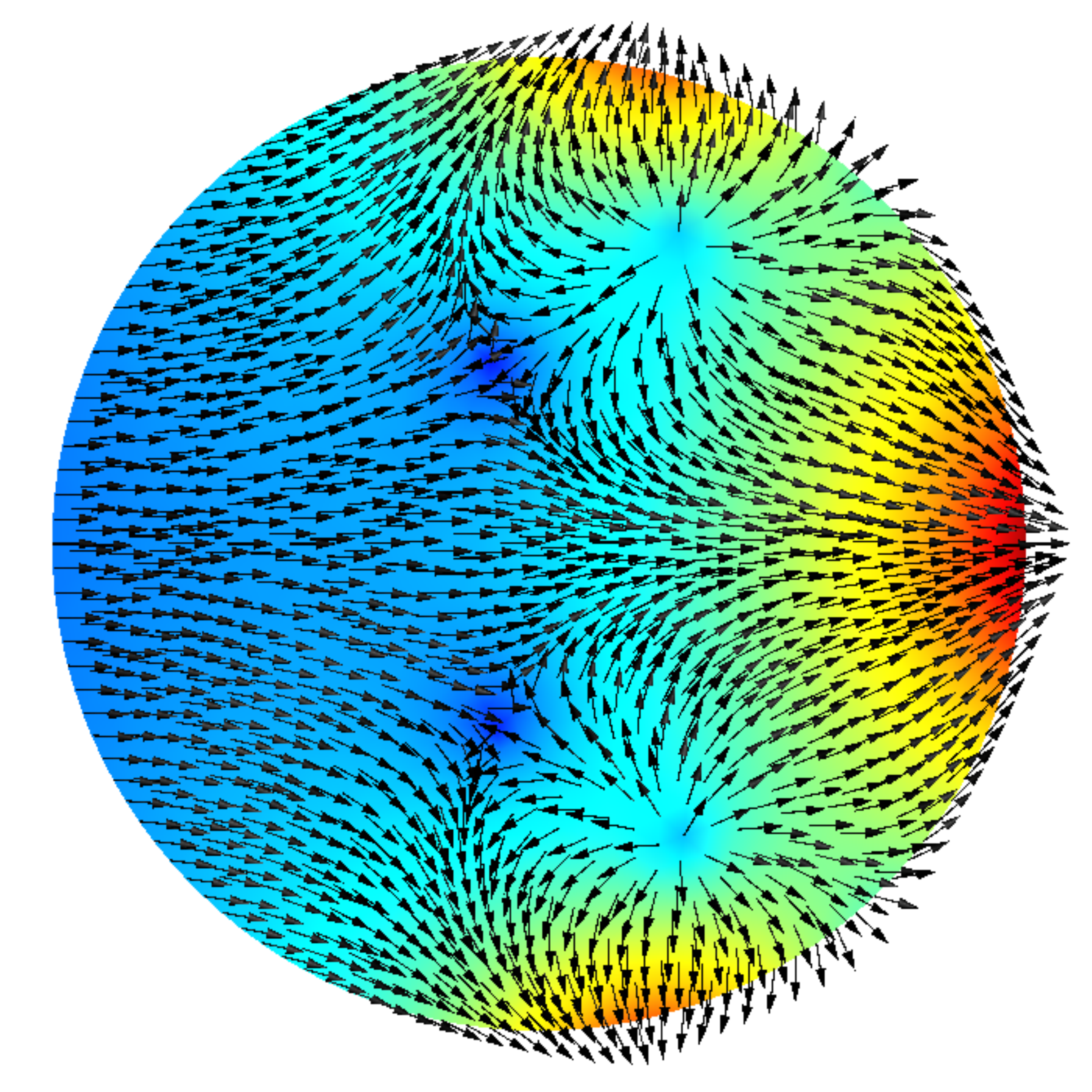}
\hskip1.cm
\includegraphics[width=0.255\linewidth]{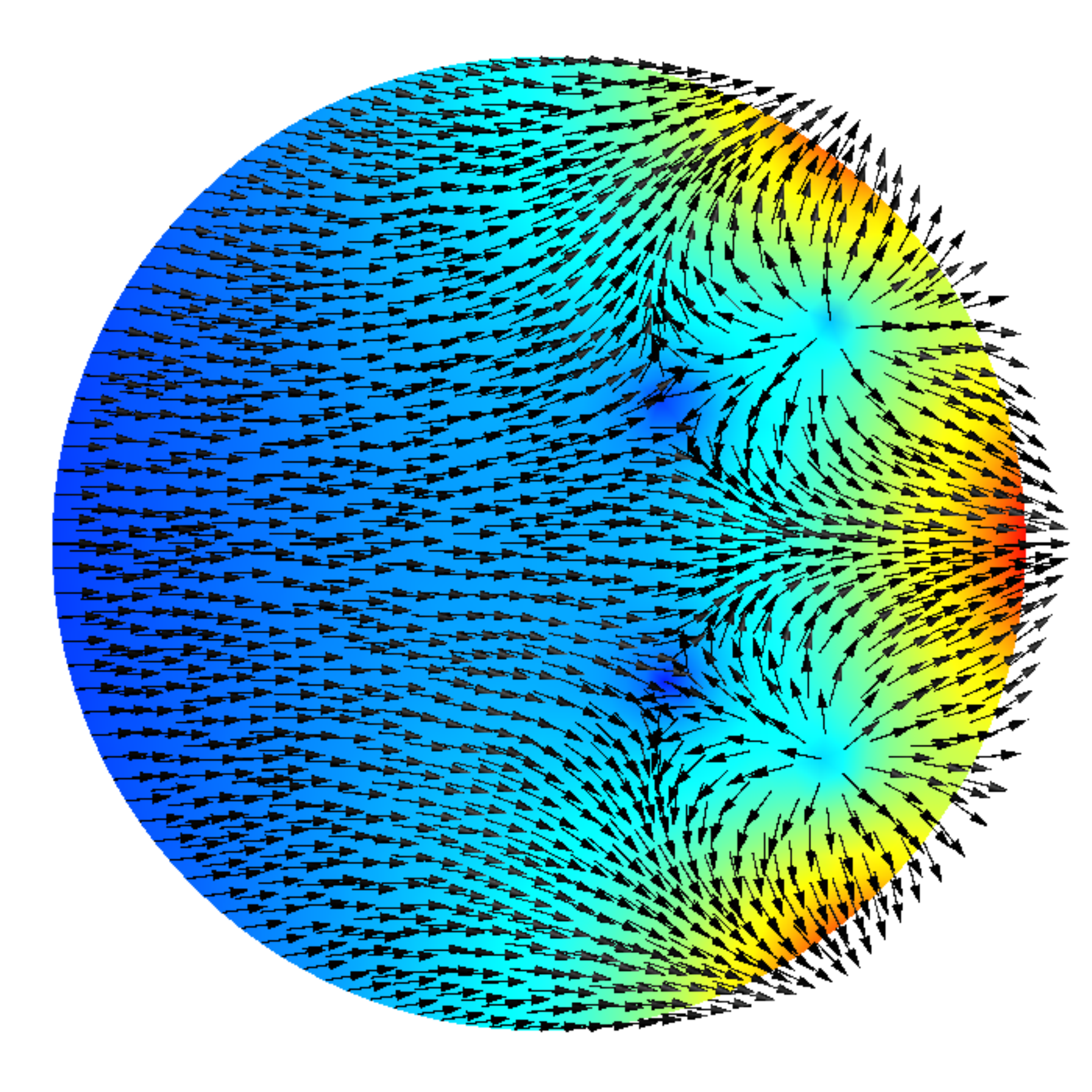}\\
\includegraphics[width=0.325\linewidth]{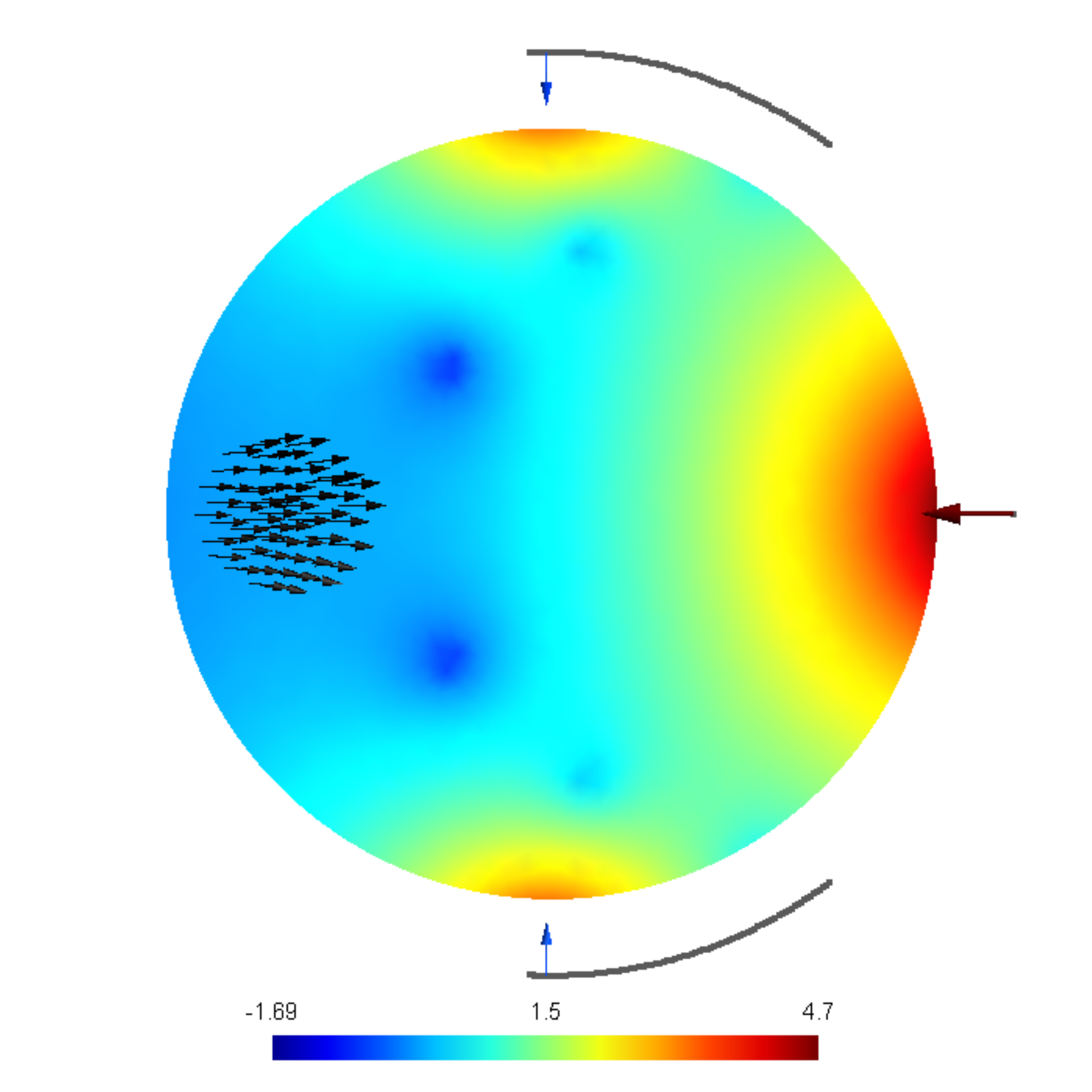}
\includegraphics[width=0.325\linewidth]{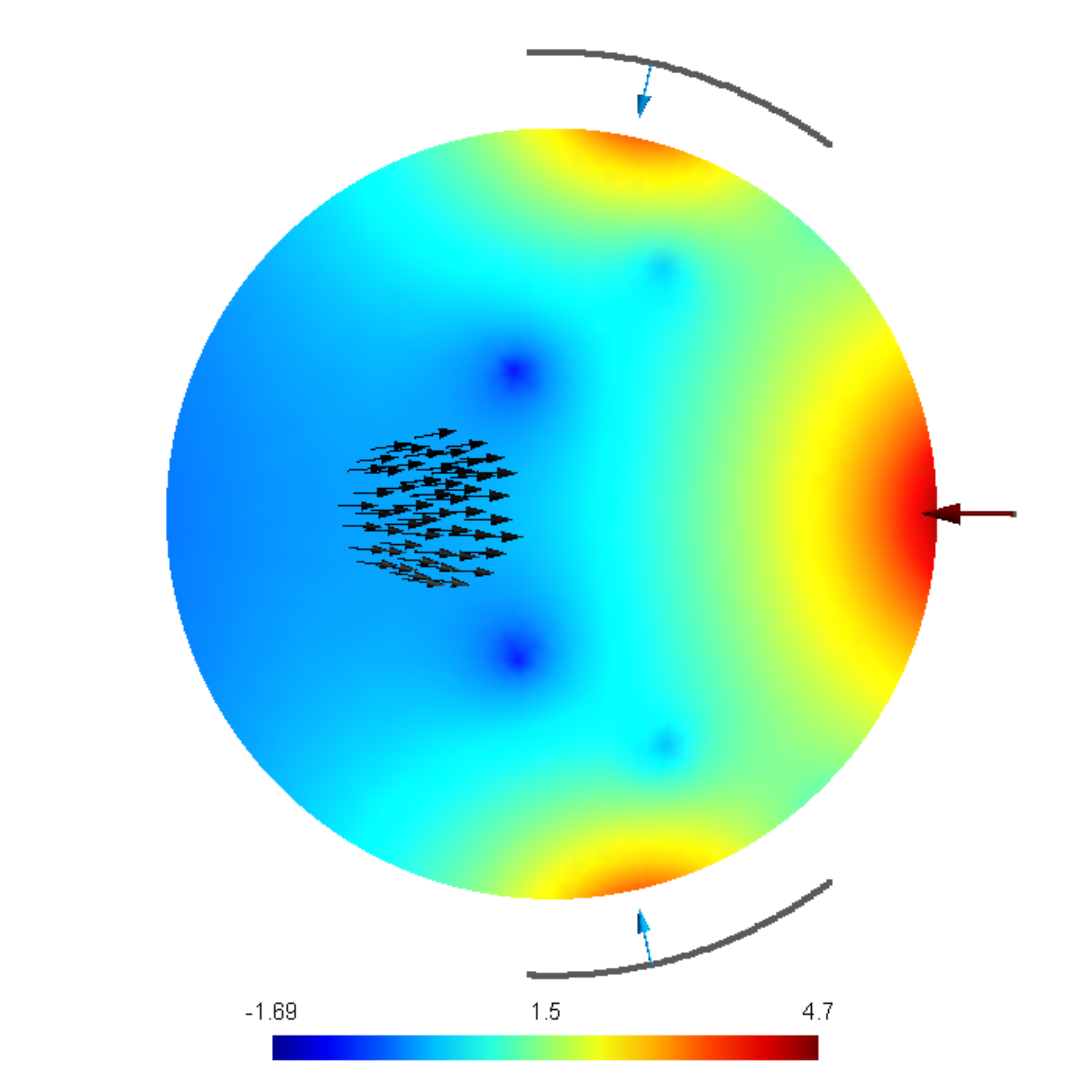}
\includegraphics[width=0.325\linewidth]{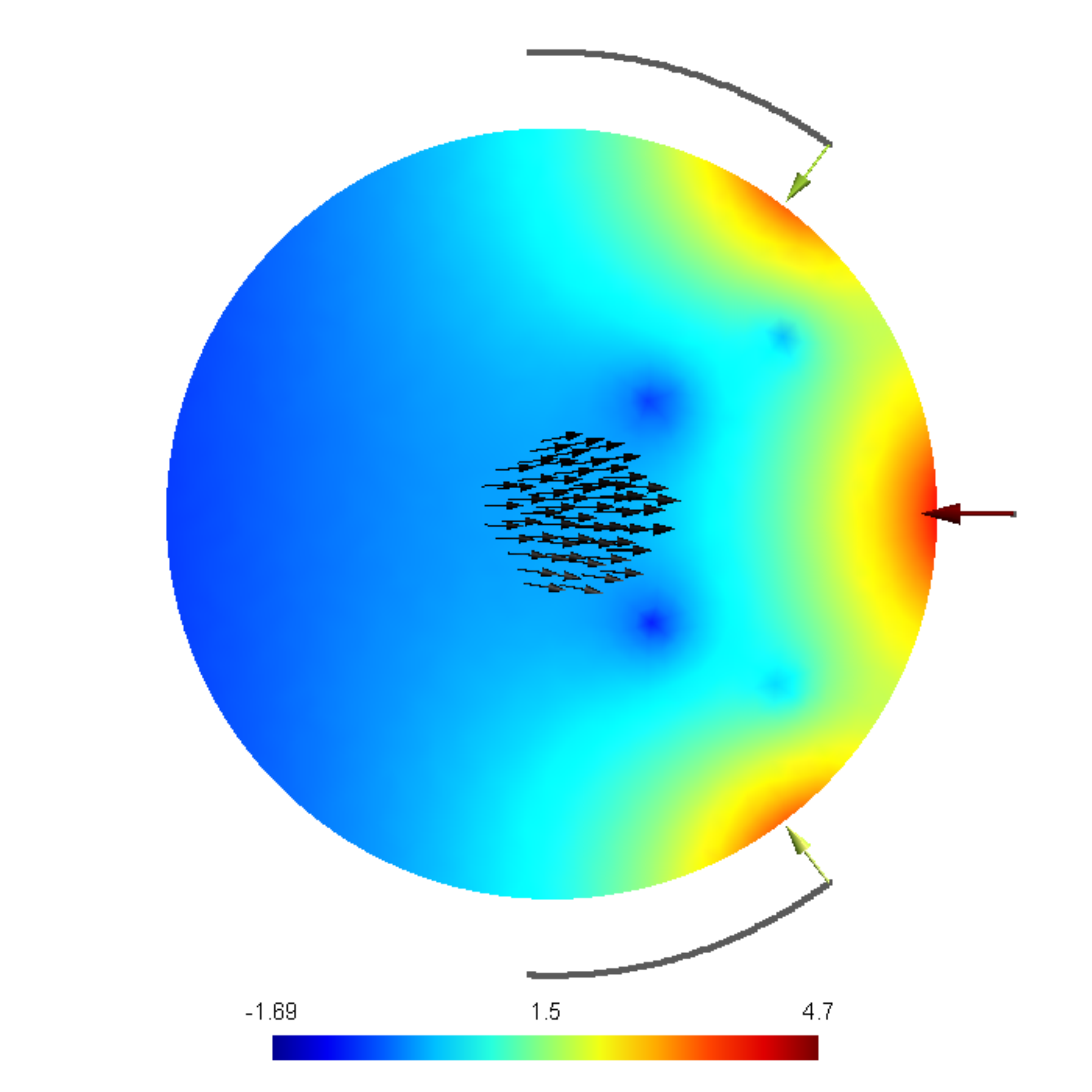}
\caption{Magnetic force solution to the minimization problem \eqref{eq:PJ2_disc} with
$D_{2,t}$ and vector field $\bvel_2$ (force fields shown by black arrows).
Top figures shows the normalized force  at three different times
$t= 0.007, 0.375$ and $0.75$ from left to right, respectively.
In the bottom figures  the magnetic forces is defined only on $D_{2,t}$
for the same time instances.
The arrows outside $\Omega$ represent the dipoles with fixed direction and variable position.
The path covered by each dipole is depicted by the line outside $\Omega$.
The magnetic force magnitude  $|\nabla|\bH|^2|$ is shown by the background coloring on a logarithmic scale.}
\label{fig:ex_posit_line}
\end{figure}

The vector intensities $(\bar{\alpha}_{i,\tau})_{i=1}^3$
and the parametrization of the position $(\bar{\phi}_{i,\tau})_{i=1}^3$
solutions to the minimization problem \eqref{eq:PJ2_disc} are shown in Figure \ref{fig:control_ini_pos} left and right, respectively .
We notice that the first dipole
attains a fixed position and intensity (the maximum intensity), whereas dipoles 2 and 3
are moving but their intensity is almost constant.
Given that we want to approximate a
uniform and unitary vector field  on $D_{1,t}$, the intensity of dipole 1 decreases as the domain
approaches dipole 1. This is due to the fact that the force applied to the edge of $D_{1,t}$,
close to dipole 1, is larger than the one applied to the farther
edge, and this difference increases as $D_{1,t}$ is close to the source.

The magnetic force and dipole positions are shown  for three time instances
$t= 0.007, 0.375$ and $0.75$  in Figures~\ref{fig:ex_posit_line}. Here dipoles 2 and 3 ``follows" the trajectory of
$D_{2,t}$ which allows to obtain an almost constant field on the moving domain.
From the previous figures it can be seen that our proposed minimization procedure
produces a vector field that is close to the target field.

\section{Numerical solution of the convection-dominated diffusion equation}\label{s:addiff_numerics}

From the previous section  it follows that an almost uniform magnetic force field
can be attained in a small region for two different dipole configurations.
In this section we move a concentration of ferrofluid modeled by
\eqref{eq:strong_1}-\eqref{eq:strong_max} where the advection term is computed
either by solving the minimization problem \eqref{eq:PJ1} or \eqref{eq:PJ2} depending
on the dipole configuration. The aim of these examples is to study the feasibility of
the dipole configurations to create a magnetic force that allow us to ``push" the
concentration inside the domain.

For simplicity, we assume $\gamma=\nu=1$, $\bv{u}=\bv{0}$.
To further minimize the spreading we choose a small diffusion coefficient,
in particular, we set $A=\varepsilon\mathbb{I}$, with $\varepsilon\ll1$.
As a result the advection diffusion equation \eqref{eq:strong_1} and \eqref{eq:strong_2} can be written as
\begin{subequations}
\begin{align}
\partial_t c+\di \left( -\varepsilon\nabla c   +c\nabla|\bH|^2\right)=0 \quad \mbox{in } \widetilde{\Omega}\times(0,T)&\label{eq:pb_strong_a}\\
(-\varepsilon\nabla c   +c\nabla|\bH|^2)\cdot\boldsymbol{n}=0\quad \mbox{on } \partial\widetilde{\Omega}\times(0,T)\qquad c(x, 0) = c_0
\quad \mbox{in } \widetilde{\Omega}&\label{eq:pb_strong_b}
\end{align}
\end{subequations}
Multiplying \eqref{eq:pb_strong_a} by a test function $v\in\rH^1(\widetilde{\Omega})$,
integrating by parts, and using the boundary condition in
\eqref{eq:pb_strong_b} leads us to the following problem: Given $c_0\in \rL^2(\widetilde{\Omega})$, find $c\in \rL^2(0,T;\rH^1(\widetilde{\Omega}))\cap \rH^1(0,T;\rH^1(\widetilde{\Omega})')$ such that
\begin{subequations}\label{eq:weak}
\begin{align}
\langle\partial_t c,v\rangle+\left( \varepsilon\nabla c  - c\nabla|\bH|^2,\nabla v\right)&=0 \quad \forall v\in \rH^1(\widetilde{\Omega}),\\
c(x, 0) &= c_0\quad \mbox{in } \widetilde{\Omega},
\end{align}
\end{subequations}
here $\langle\cdot,\cdot\rangle$ is the duality product between $\rH^1(\widetilde{\Omega})$ and $\rH^1(\widetilde{\Omega})'$.
The existence and uniqueness of solution to this problems follows, for instance, from \cite[Theorem~6.6]{EG04}. Notice that in case of the Dirichlet boundary condition $c=0$ we must replace $\rH^1(\widetilde\Omega)$ by $\rH^1_0(\widetilde\Omega)$ in the above calculations.

Below we present two numerical examples. In the first example we consider a simplified
$\widetilde\Omega$ whose size is comparable to $\Omega$ where the goal is to
magnetically inject the concentration of nanoparticles. 
With the current configuration, 
the concentration may reach the boundary and,
as is mentioned in \cite{NBBS2011} for a similar problem, boundary layers may appear.
On the other hand, given that we are interested in the case $\varepsilon\ll1$,
 a proper space discretization has to be considered in order to avoid numerical oscillations. 
We adopt a monotone scheme for space discretization: edge-averaged finite element (EAFE) \cite{XZ1999}. Other alternatives
such as SUPG stabilization are equally
valid. Given that the EAFE scheme is a type of upwinding scheme, we observe a significant diffusion in all cases.
Thus making it difficult to control the concentration in case of complicated geometries, for instance the flow around
an obstacle.

As a second example we study the feasibility of moving the concentration around
an obstacle. In order to ensure the uniformity of the field we consider a small, but complex, domain $\widetilde{\Omega}$
(see Figure~\ref{fig:pde_control_domain} (right)).
Moreover, it is shown that Dirichlet boundary condition $c=0$
on $\partial\widetilde\Omega$ in \eqref{eq:pb_strong_b} is meaningful in this example.
For the numerical approximation we consider piecewise linear finite element space discretization and
explicit Euler scheme for time discretization. In order to avoid having to solve a linear
system at each time step we consider  mass lumping with a correction technique to account for the dispersive effects (see \cite{GP2013}). 
In section~\ref{s:eafe}
we discuss the EAFE scheme and in section~\ref{s:explicit} we discuss the explicit
scheme. Numerical examples are presented in each case.

\subsection{Edge-averaged Finite Element (EAFE) Method and Magnetic Injection}
\label{s:eafe}

 It is well--known that
the standard finite element method applied to problem \eqref{eq:weak}, yields solution oscillations when
$\varepsilon\ll \|\nabla|\bH|^2\|_{L^\infty(0,T;\widetilde\Omega)}$.
However, there is no universal approach to treat such problems.
In order to choose a numerical scheme to approximate problem~\eqref{eq:weak}
first notice an interesting
feature of the flux $J(c)$ of \eqref{eq:pb_strong_a}:
\begin{equation}\label{eq:adv-1}
J(c):=\varepsilon\nabla c - \nabla|\bH|^2c
= e^{|\bH|^2\varepsilon^{-1}}\nabla\left(\varepsilon \, e^{-|\bH|^2\varepsilon^{-1}}c\right)
= a \nabla u
\end{equation}
i.e., it is symmetrizable. Here
\begin{equation}\label{eq:adv0}
 a = e^{|\bH|^2\varepsilon^{-1}} , \quad \psi = -|\bH|^2\varepsilon^{-1}, \quad
 u = \varepsilon e^{\psi}c .
\end{equation}
Equations with the above property have been studied by many authors  (see, for instance, \cite{BMP1989,MZ1988}).
They also appear in applications such as semiconductors device simulation.
In order to exploit the structure of the equation, for the numerical
approximation we consider the edge-average finite element (EAFE), a
\textit{monotone scheme} introduced in \cite{XZ1999} for stationary
advection-diffusion equations. It was later extended to time-dependent
advection-diffusion equations in \cite{BK2015} and
compared with SUPG, another well-known stabilization technique.
With this in mind we adapt the scheme presented in \cite{BK2015} to our particular case.

We begin by recalling the EAFE scheme of \cite{XZ1999} and apply it to
the symmetric operator in \eqref{eq:adv-1}. We denote by $\calT_h$
a shape regular triangulation of $\widetilde{\Omega}$ and by
$\rH_h\subset \rH^1(\Omega)$ the space of piecewise linear finite
elements with hat basis $\{\phi_i\}_{i=1}^N$. Notice that the scheme
proposed below is equally applicable to both the Neumann and the Dirichlet
problems.
We designate by $E = E_{ij}$ the edge connecting the nodes $x_i$ and
$x_j$. The discrete weak form of $-\textrm{div }(a\nabla u)$ reads
\begin{equation}\label{eq:adv1}
  \int_\Omega a \nabla u_h \cdot \nabla v_h =
  \sum_{T\in\calT_h} \int_\Omega a \nabla u_h \cdot \nabla v_h
  = - \sum_{T\in\calT_h} \sum_{E\subset\partial T} a_E^T
  \delta_{E} (e^\psi c_h) \delta_{E} v_h,
\end{equation}
where $\delta_{E} (w_h) = w_i-w_j$ and
\[
a_E^T = \left(\ \int_T \varepsilon a \nabla \phi_i \cdot \nabla\phi_j\,dx \right)
= \left(\frac{1}{|T|} \int_T \varepsilon e^{-\psi} \, dx \right)
\left(\int_{T} \nabla\phi_i \cdot \nabla\phi_j \, dx \right)
\]
because $\nabla\phi_i$ is piecewise constant. We now replace the first
factor by the so-called harmonic average over the edge $E$,
namely
\[
\alpha_E := \left( \frac{1}{|E|} \int_E \varepsilon^{-1} e^{\psi} \, dx
\right)^{-1} \approx \frac{1}{|T|} \int_T \varepsilon e^{-\psi} \, dx.
\]
Upon setting $\omega_E^T := \int_T \nabla\phi_i\cdot\nabla\phi_j$, we
realize that \eqref{eq:adv1} can be approximated by
\begin{equation}\label{eq:B}
\mathcal{B}_h(c_h,v_h):= - \sum_{T\in\calT_h}\left(\sum_{E\subset \partial T}
\omega_E^T \alpha_E
\delta_E(e^{\psi}c_h)\delta_E (v_h)\right)
\quad \forall\, c_h,v_h\in \rH_h,
\end{equation}
which is the bilinear form corresponding to the EAFE scheme of \cite{XZ1999}.

For the time interval $[0, T]$ we introduce a uniform partition $0 = t_0 < t_1 <\ldots <
t_N = T$, with $N \geq 1$, and denote by $\Delta t=T/N$ the time step size.
By applying the implicit Euler scheme for time
discretization, we obtain the following fully discrete scheme of problem $\eqref{eq:weak}$:
Given $c_0\in \rL^2(\widetilde{\Omega})$, for $n=1,\ldots,N$ find $c^n_h\in \rH_h$ such that
\begin{subequations}\label{eq:discrete2}
\begin{align}
(c^n_h,v_h)+\Delta t \mathcal{B}_h(c^n_h,v_h) &=(c^{n-1}_h,v_h) \quad \forall v_h\in \rH_h\\
c^0_h &= c_0
\end{align}
\end{subequations}
Error estimates of a similar discrete problem  can be found in \cite{BK2015}.

In the following subsections we will study the feasibility of the
manipulation of nanoparticles concentration
by controlling the dipole intensity and direction. To fix ideas, let
$\Omega$ be a ball of unit radius centered
at $(0, 0)$. The configuration of the 4 dipoles is same as in the previous example (cf. Section~\ref{sec:example1}).

In our first example we focus on magnetically injecting the initial concentration $c_0$ of nanoparticles so that it arrives at a desired location. 
 Instead of just the intensity (cf. \cite{ANV2016,KS2011}) the control variables here are both the intensities and the directions of the dipoles. Additionally, the concentration is close to the boundary for certain time instances. A standard Galerkin scheme in space usually leads to oscillations in this case. We assume that the domain $\widetilde{\Omega}\subset\Omega$ is a $\pi/4$ clockwise rotation of the rectangle $[-0.9,0.9]\times[-0.3,0.3]$ (cf. Figure~\ref{fig:pde_control_domain}, left). We set the final time $T = 0.75$ and
$c_0(x,y) = \exp\left(3\times10^{-5}((x+0.53)^2+(y-0.53)^2)\right)$ (cf. Figures~\ref{fig:pde_control_domain} and \ref{fig:pde_dir_line} (left)).

\begin{figure}[h!]
\centering
\includegraphics[height=3.3cm]{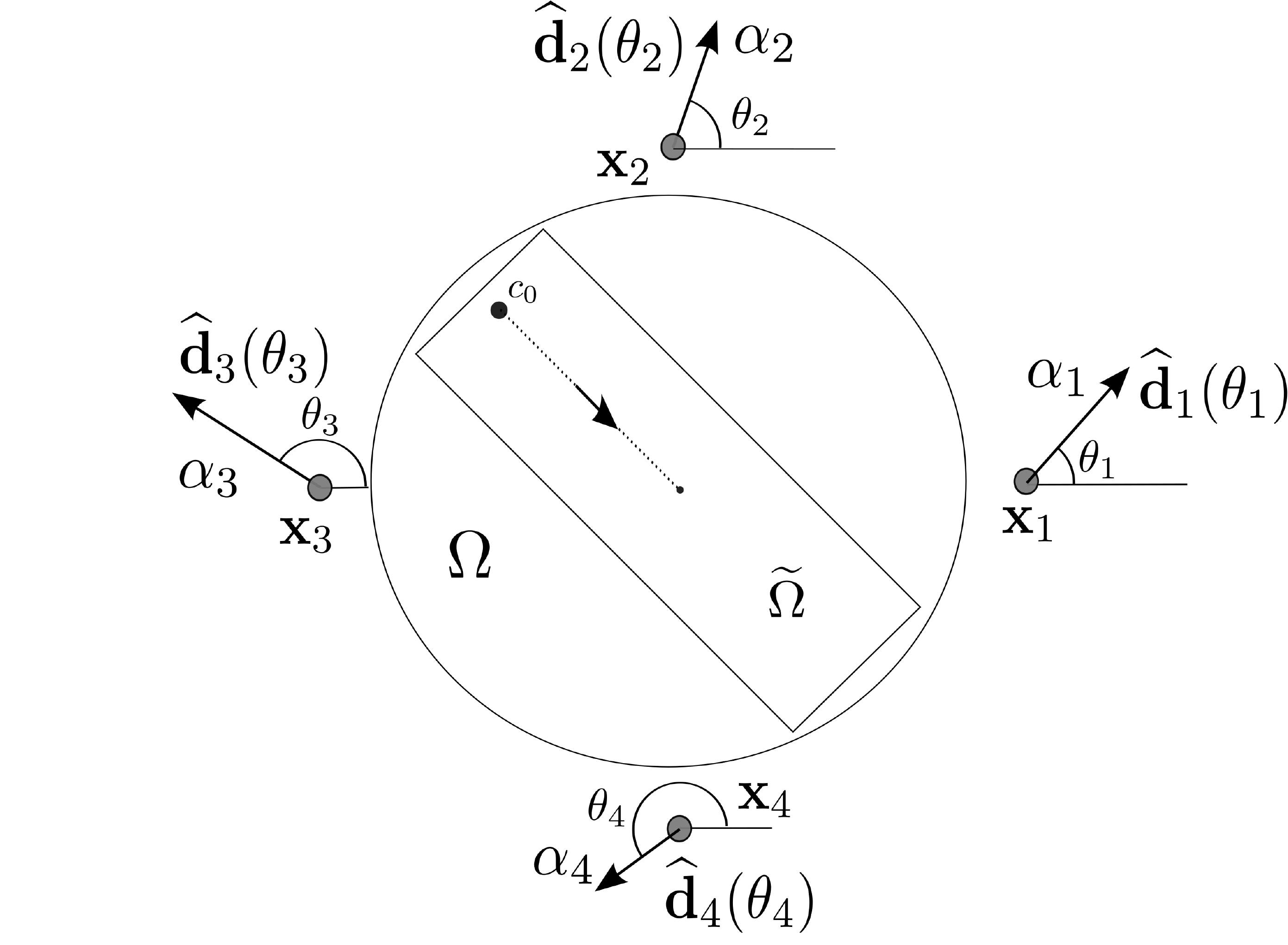}\quad
\includegraphics[height=3.3cm]{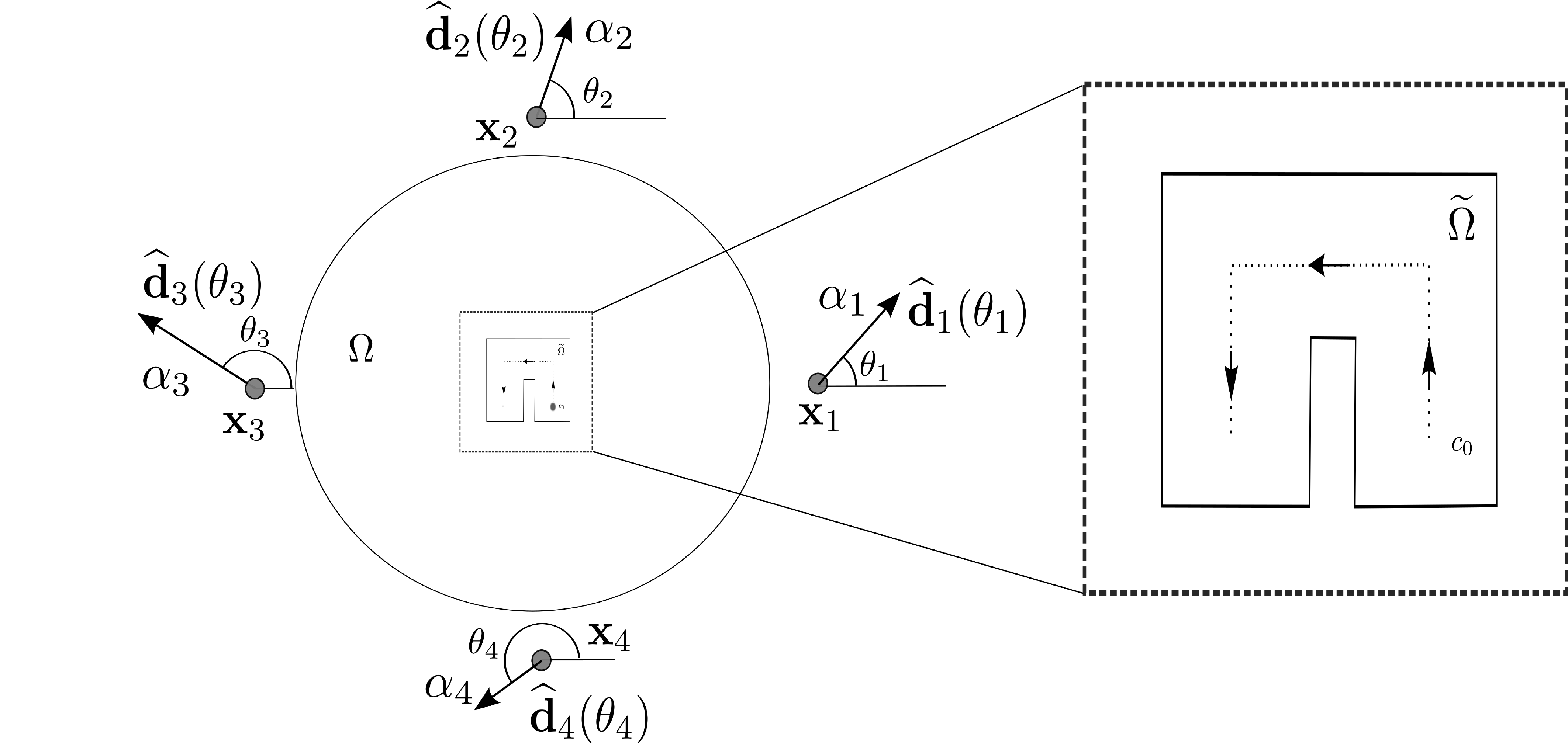}
\caption{Computational domain $\widetilde{\Omega}\subset \Omega$ and dipole configuration with fixed position and variable
direction and intensity.}
\label{fig:pde_control_domain}
\end{figure}

We use the optimal magnetic force computed in the first example of Section~\ref{sec:example1}  (cf.~ Figure~\ref{fig:ex_dir_line}) as an input to \eqref{eq:discrete2}.
Here we have set $\varepsilon=10^{-5}$, $\Delta t=7.5\times 10^{-3}$ and $h=0.0065$.
The initial concentration is located at $(-0.53,0.53)$ and the final location is $(0,0)$, see Figure~\ref{fig:pde_control_domain} (left).
Figure~\ref{fig:pde_dir_line} shows the snapshots of concentration at three time instances.
\begin{figure}
\includegraphics[width=0.325\linewidth]{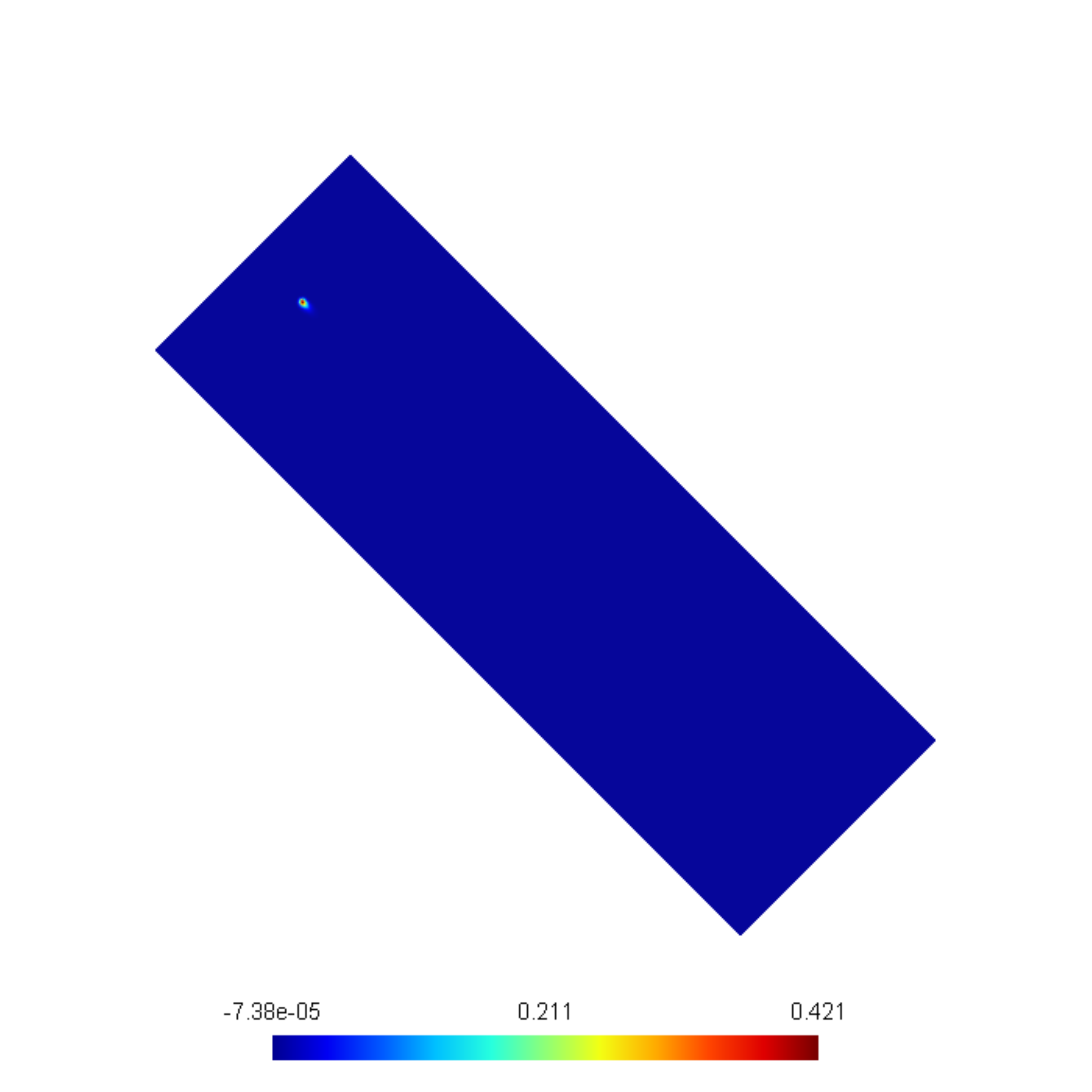}
\includegraphics[width=0.325\linewidth]{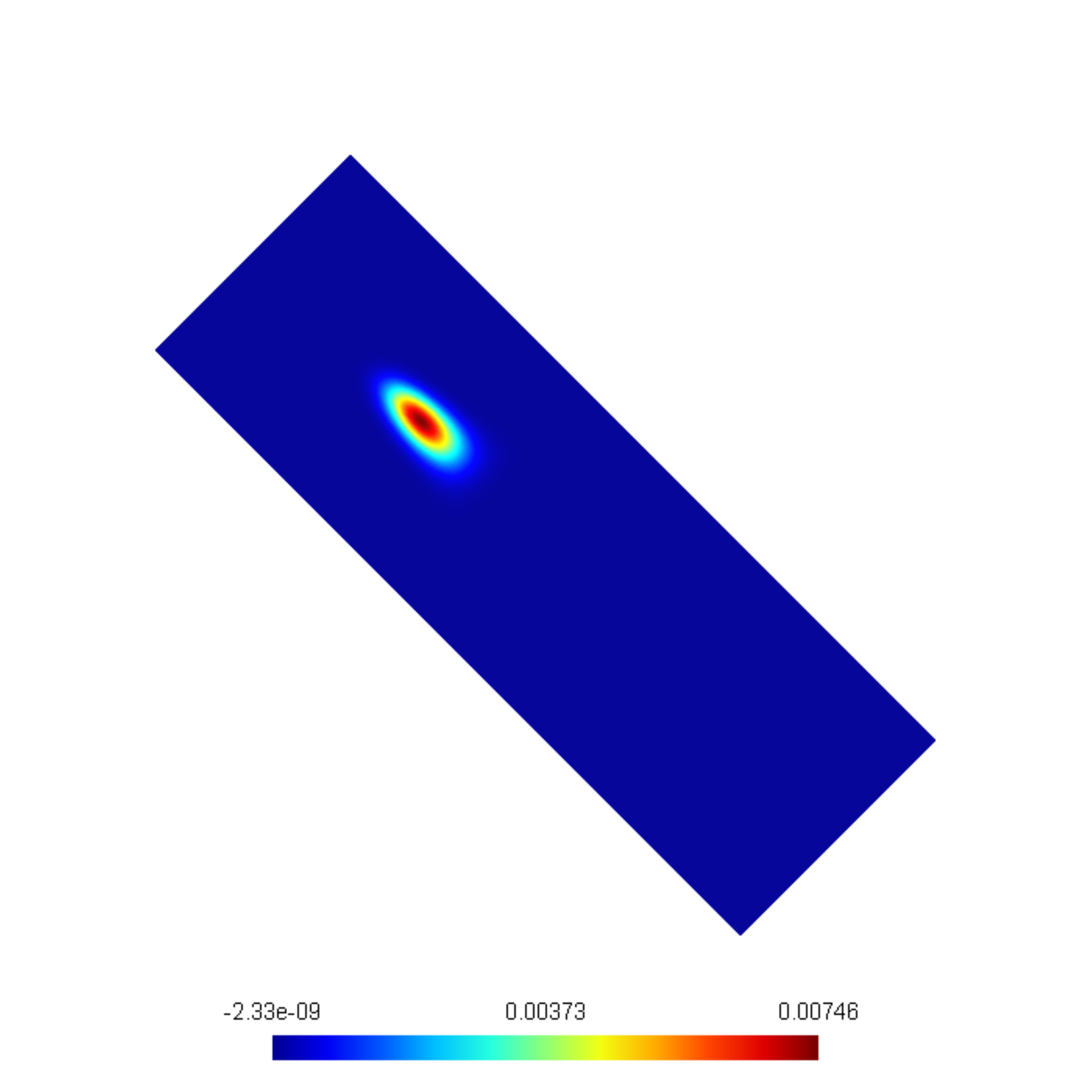}
\includegraphics[width=0.325\linewidth]{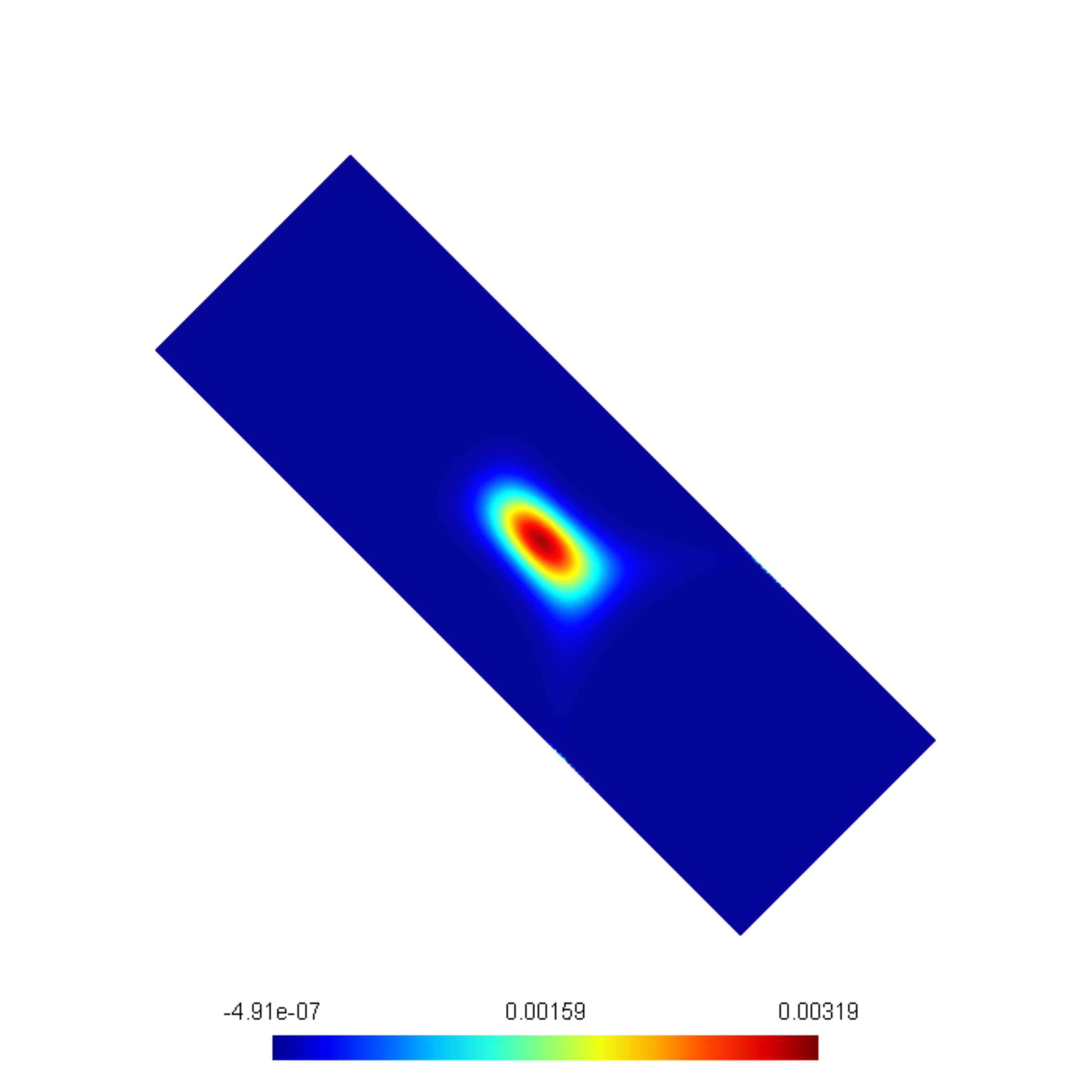}
\caption{Evolution of the concentration in $\widetilde{\Omega}$  at times $t=0, 0.375, 0.75$ with $\eps=10^{-5}$}
\label{fig:pde_dir_line}
\end{figure}

Notice that (cf. Figure~\ref{fig:pde_dir_line}) the concentration reaches the desired location. Since we are using 4 dipoles instead of 8 dipoles used in (cf. \cite{ANV2016,KS2011}) we find it difficult to control the diffusion, also notice that we have Neumann boundary conditions here. Indeed the spikes in Figure~\ref{fig:pde_dir_line} (right) can be explained by the behavior of the magnetic force given in Figures~\ref{fig:control_ini_dir} (left) and \ref{fig:ex_dir_line} (right).

\subsection{Explicit Scheme and Magnetic Injection Around an Obstacle}
\label{s:explicit}

Our next example explores the possibility of moving the concentration around an obstacle.
Indeed a larger $\varepsilon$ leads to significant diffusion and it is not possible to
accomplish our goal. On the other hand, a smaller $\varepsilon$ can computationally
lead to an ill-posed problem; recall the Neumann boundary condition:
\[
(-\varepsilon\nabla c   +c\nabla|\bH|^2)\cdot\boldsymbol{n}=0\quad \mbox{on } \partial\widetilde{\Omega}\times(0,T) .
\]
So when $\varepsilon$ is small then an appropriate boundary condition is Dirichlet (assuming that $\nabla|\bH|^2\cdot\boldsymbol{n}  \neq 0$).

Motivated by this, we consider the advection-diffusion equation \eqref{eq:pb_strong_a}
but with the Dirichlet boundary condition $c=0$ on $\partial\widetilde\Omega\times(0,T)$.
The latter is also physically meaningful if the concentration stays away from the boundary at all times.
Even for small parameter $\varepsilon$ we observe a relatively large diffusion both in
case of EAFE scheme and stabilized implicit (in time) scheme \cite{ANV2016}.
For that reason and, in order to ensure that the concentration is far from the boundary,
we have adopted a piecewise linear finite element scheme for the space approximation and the explicit Euler scheme
for the time discretization.

The fully discrete problem is: Given $c_0\in \rL^2(\widetilde{\Omega})$,
for $n=1,\ldots,N$ find $c^n_h\in \rH_{h,0} \subset \rH_0^1(\widetilde{\Omega})$ such that
\begin{subequations}\label{eq:discrete_explicit}
\begin{align}
(c^n_h,v_h) &=(c^{n-1}_h,v_h) - \Delta t a(c^{n-1}_h,v_h) \quad \forall v_h\in \rH_{h,0}\\
(c^0_h,\phi) &= (c_0,\phi) \quad \forall \phi \in \rH_{h,0} ,
\end{align}
\end{subequations}
with $a(c^{n-1}_h,v_h) = \int_\Omega \varepsilon \nabla c_h^{n-1} \cdot \nabla v_h
- c_h^{n-1}\nabla|\bH|^2 \cdot \nabla v_h \; dx$.

In the matrix vector notation the system in \eqref{eq:discrete_explicit} is given by
\begin{subequations}\label{eq:unlump}
\begin{align}
{\boldsymbol M} {\boldsymbol c}^n &= {\boldsymbol M} {\boldsymbol c}^{n-1} - \Delta t {\boldsymbol A} {\boldsymbol c}^{n-1} \\
{\boldsymbol M} {\boldsymbol c}^0 &= {\boldsymbol M} {\boldsymbol c}_0   ,
\end{align}
\end{subequations}
where ${\boldsymbol M} = (m_{ij})_{i,j}$ with $m_{ij} = \int_\Omega\phi_i\phi_j$ and ${\boldsymbol A}$ are the mass and the stiffness matrices,
respectively. Moreover, for every $k\in\{1,\dots,N\}$ the vector ${\boldsymbol c}^k$ contains the finite element nodal values.
Clearly \eqref{eq:unlump} requires solving a linear system at every time. To avoid such a linear solve we employ the
mass lumping (with correction) of \cite{GP2013}.
Let $\overline{\boldsymbol M} = (\overline{m}_{ij})_{i,j}$ denotes a diagonal lumped mass matrix with
diagonal entries given by $\overline{m}_{ii} = \sum_j m_{ij}$. Then we approximate ${\boldsymbol M}^{-1}$
 by replacing the inverse of the lumped mass matrix by
$(\mathbb{I}+{\boldsymbol B}_r)\overline{\boldsymbol M}^{-1}$ where ${\boldsymbol B}_r=\overline{\boldsymbol M}^{-1}(\overline{\boldsymbol M}-{\boldsymbol M})$.

Our domain of interest is
$\widetilde{\Omega}=[-0.18,0.18]^2\setminus [-0.02,0.02]\times[-0.18,0] $ depicted on
Figure~\ref{fig:pde_control_domain} (right) where the path traversed by the concentration is given by the dotted line.
Our aim is to move the initial concentration
$$
 c_0(x,y) = \exp\left(10^{-4}((x-0.1)^2+(y+0.1)^2)\right)
$$
from the initial location $(0.1,-0.1)$ at $t=0$
to the final location $(-0.1,-0.1)$ at $t=T=0.6$.
With this in mind, we first solve the optimal control problem \eqref{eq:PJ1_disc}
with  a ball of radius $0.2$ centered at $(0.1,-0.1)$ as a reference domain $\hD$.
 We  approximate  a  time dependent vector field given
  by $\bvel(\bx,t)$ such that: $\bvel(\bx,t)= (0,1)^\top$ if $0\leq t< 0.2$,
   $\bvel(\bx,t)=(-1,0)^\top$ if $0.2\leq t< 0.4$   and $\bvel(\bx,t)=(0,-1)^\top$ if $0.4\leq t\leq 0.6$.
  The moving domains $D_{t}$ is such that $D_{t}=\bx_I(t,\hD)$, with
$\bx_I(t,\hbx)=\bvarphi(t)+\hbx$.
Here $\bvarphi(t)$, which represents the
   trajectory of the barycenter of $D_{t}$, is defined as: $\bvarphi(t)= (0.1,-0.1)+t(0,1)^\top$ if $0\leq t< 0.2$,
   $\bvarphi(t)= (0.1,0.1)+(t-0.2)(-1,0)^\top$ if $0.2\leq t< 0.4$
   and $\bvarphi(t)= (-0.1,0.1)+(t-0.4)(0,-1)^\top$ if $0.4\leq t\leq 0.6$. 
Figure~\ref{f:adv_diff_ex2} (top row) shows the resulting magnetic force at four different times.

In order to steer $c_0$ around the obstacle shown in Figure~\ref{f:adv_diff_ex2},
we use the above optimal force as input to \eqref{eq:discrete_explicit}.
The bottom row in Figure~\ref{f:adv_diff_ex2} shows the concentration snapshots at four different times.
Here we have set $\varepsilon = 10^{-8}$,  $\Delta t = 3\times10^{-5}$ and mesh size $h=0.0016$.

\begin{figure}[h!]
\centering
\includegraphics[width=0.24\textwidth]{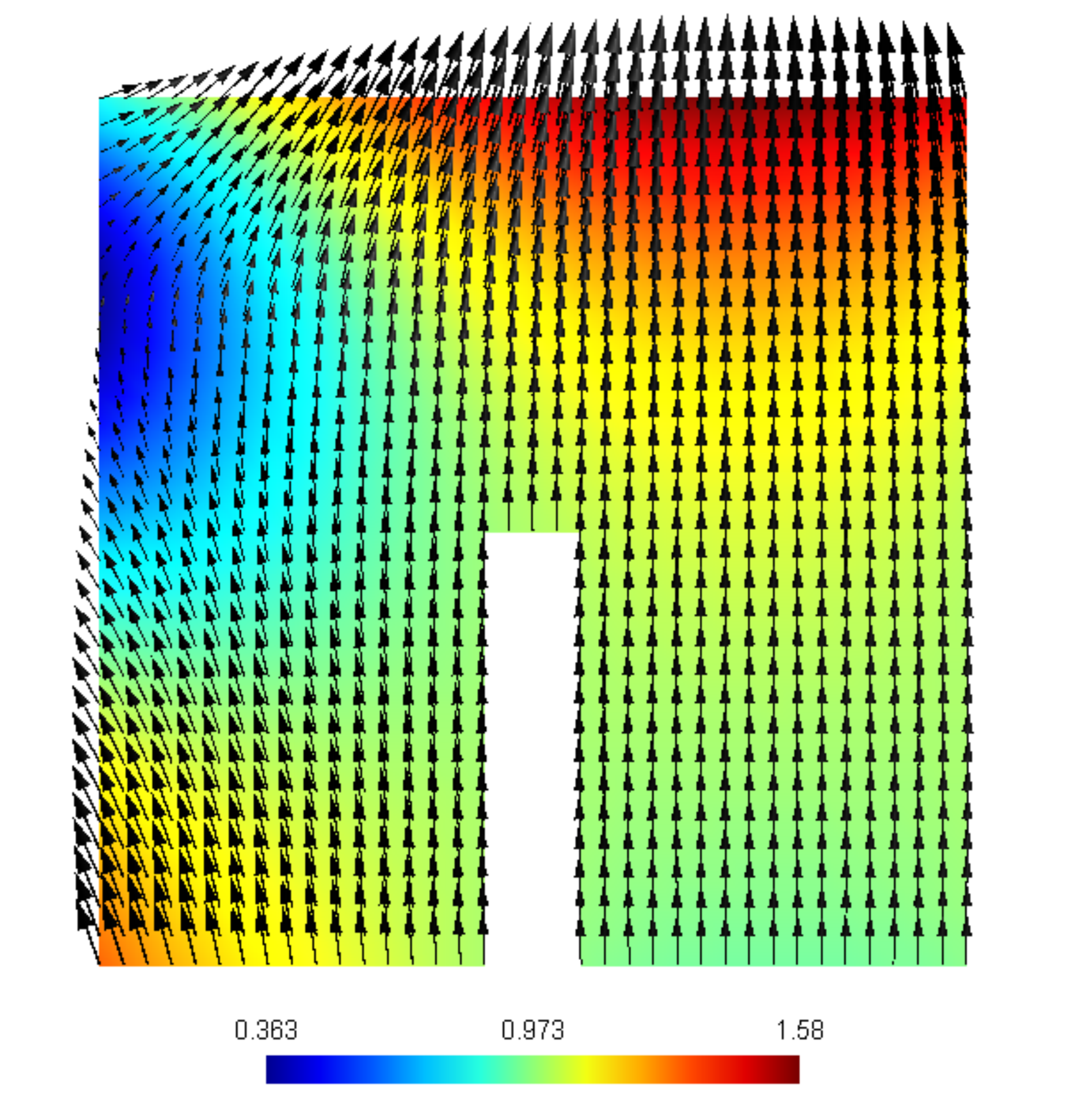}
\includegraphics[width=0.24\textwidth]{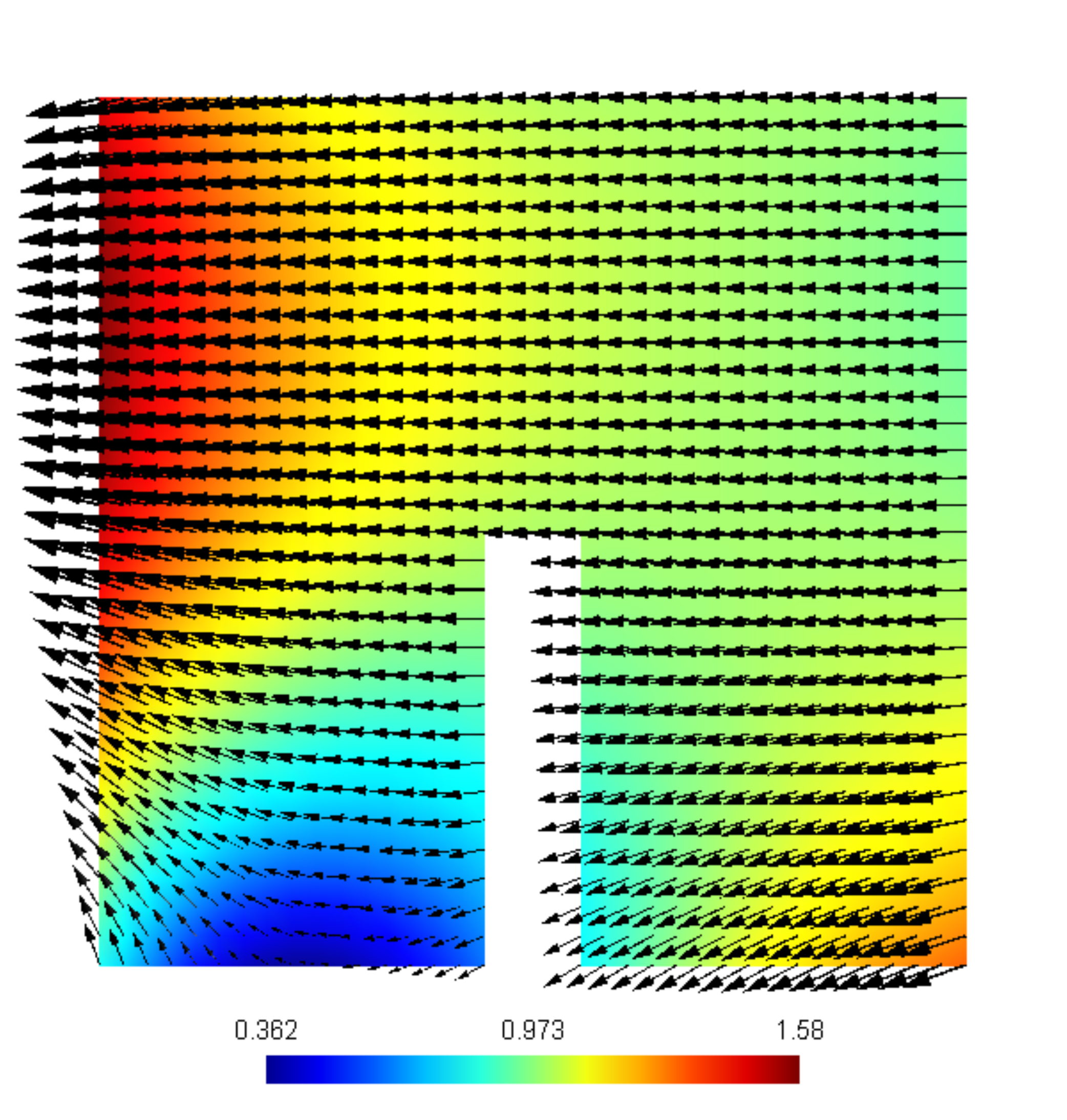}
\includegraphics[width=0.24\textwidth]{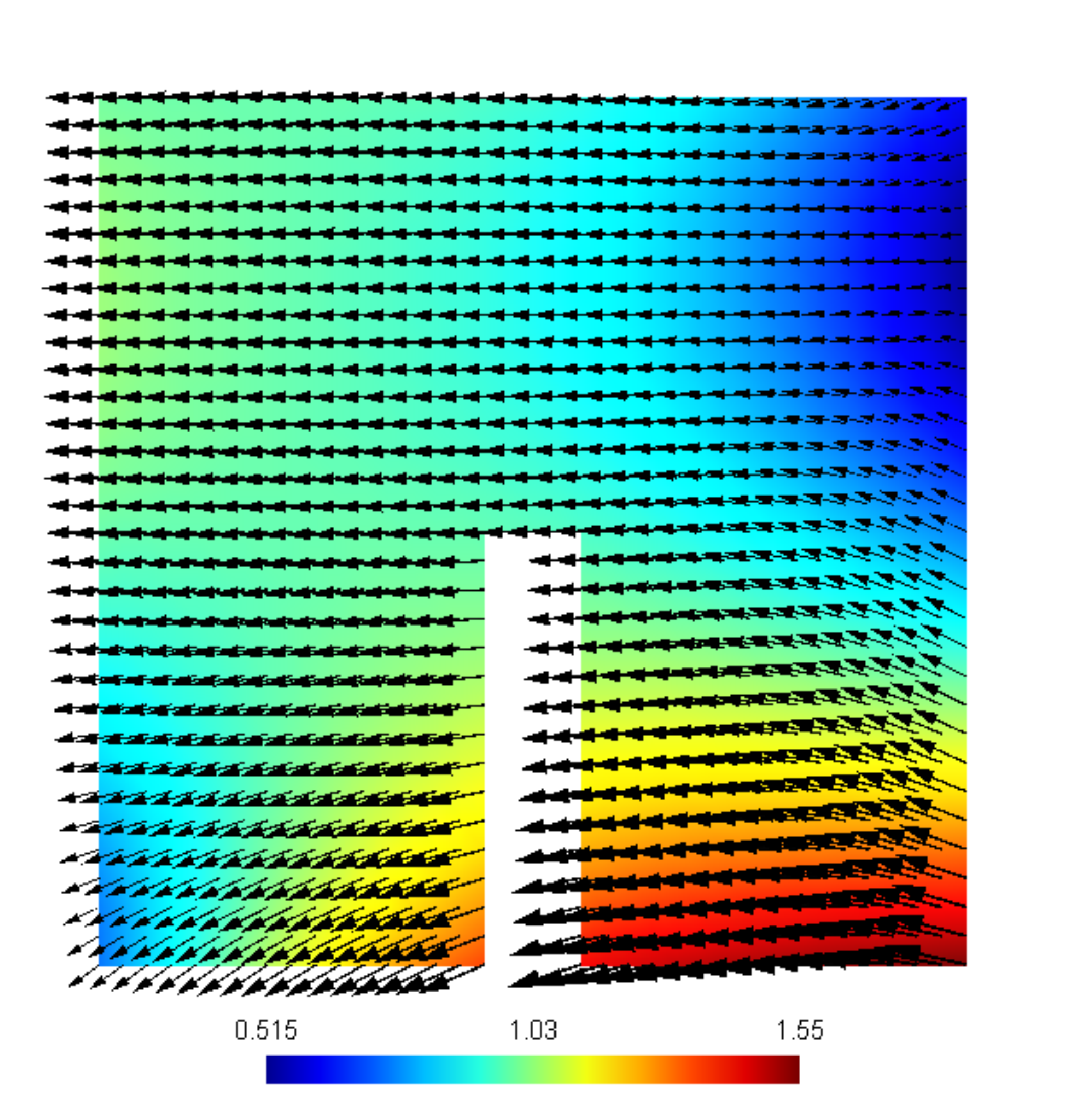}
\includegraphics[width=0.24\textwidth]{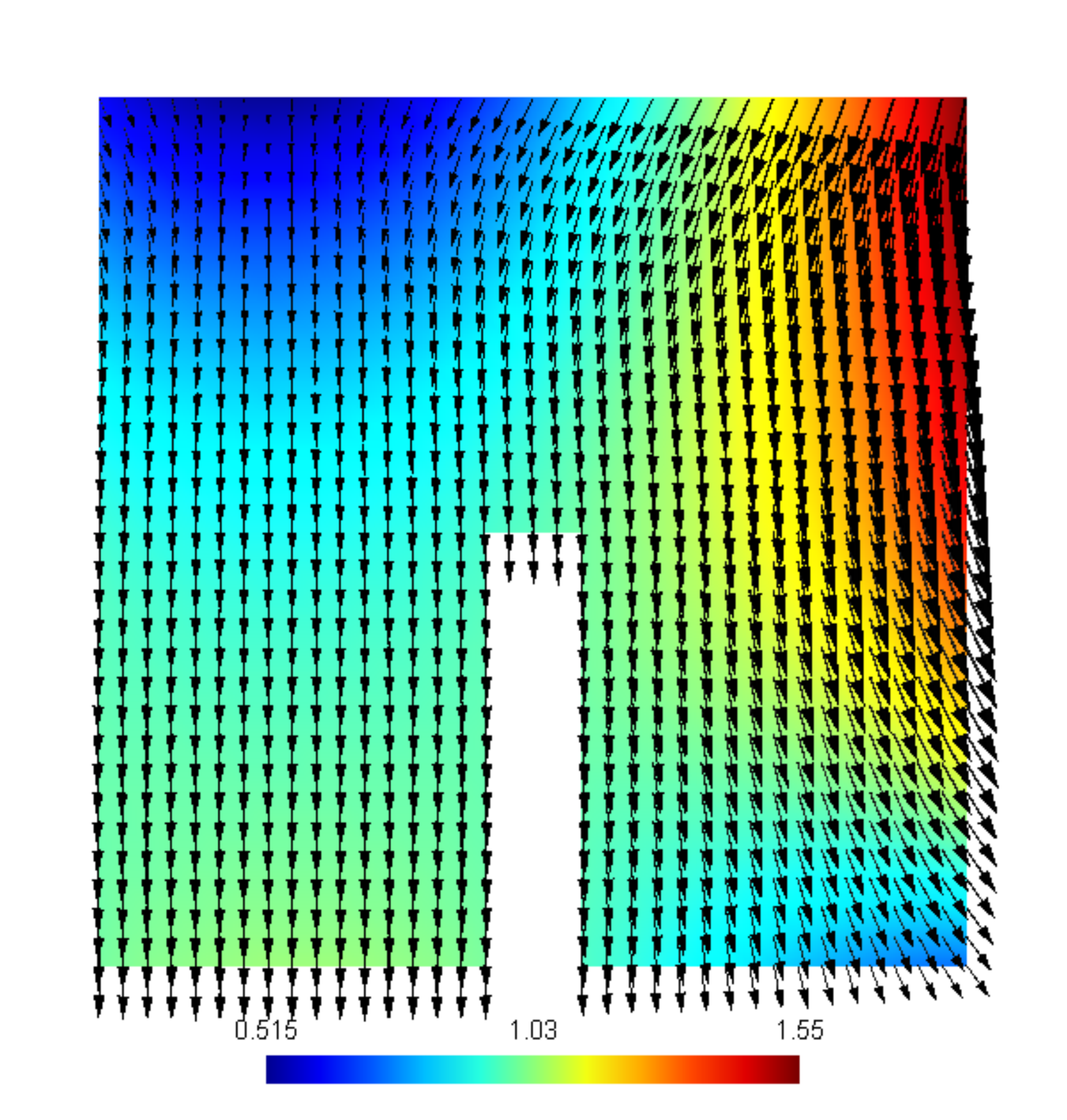}

\includegraphics[width=0.24\textwidth]{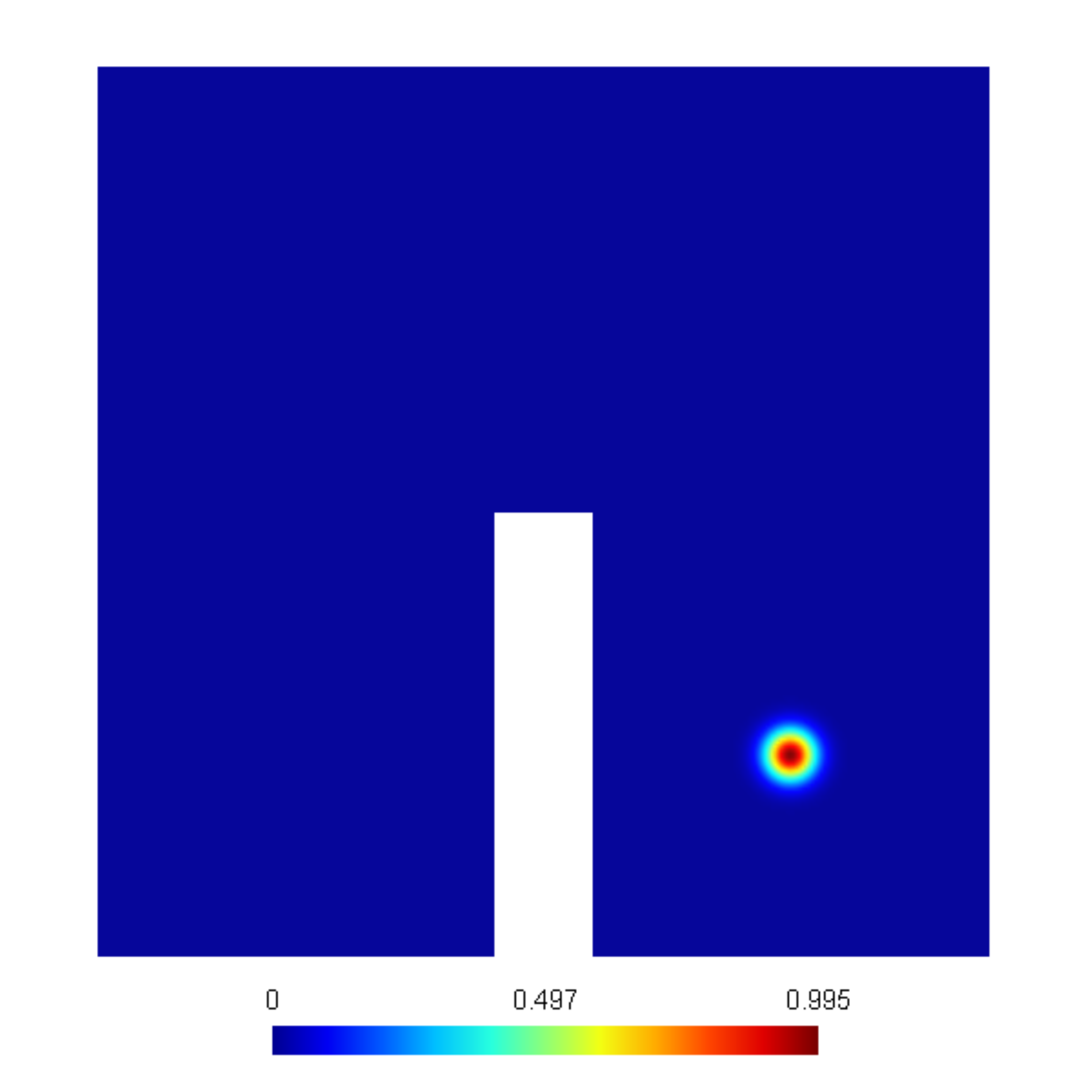}
\includegraphics[width=0.24\textwidth]{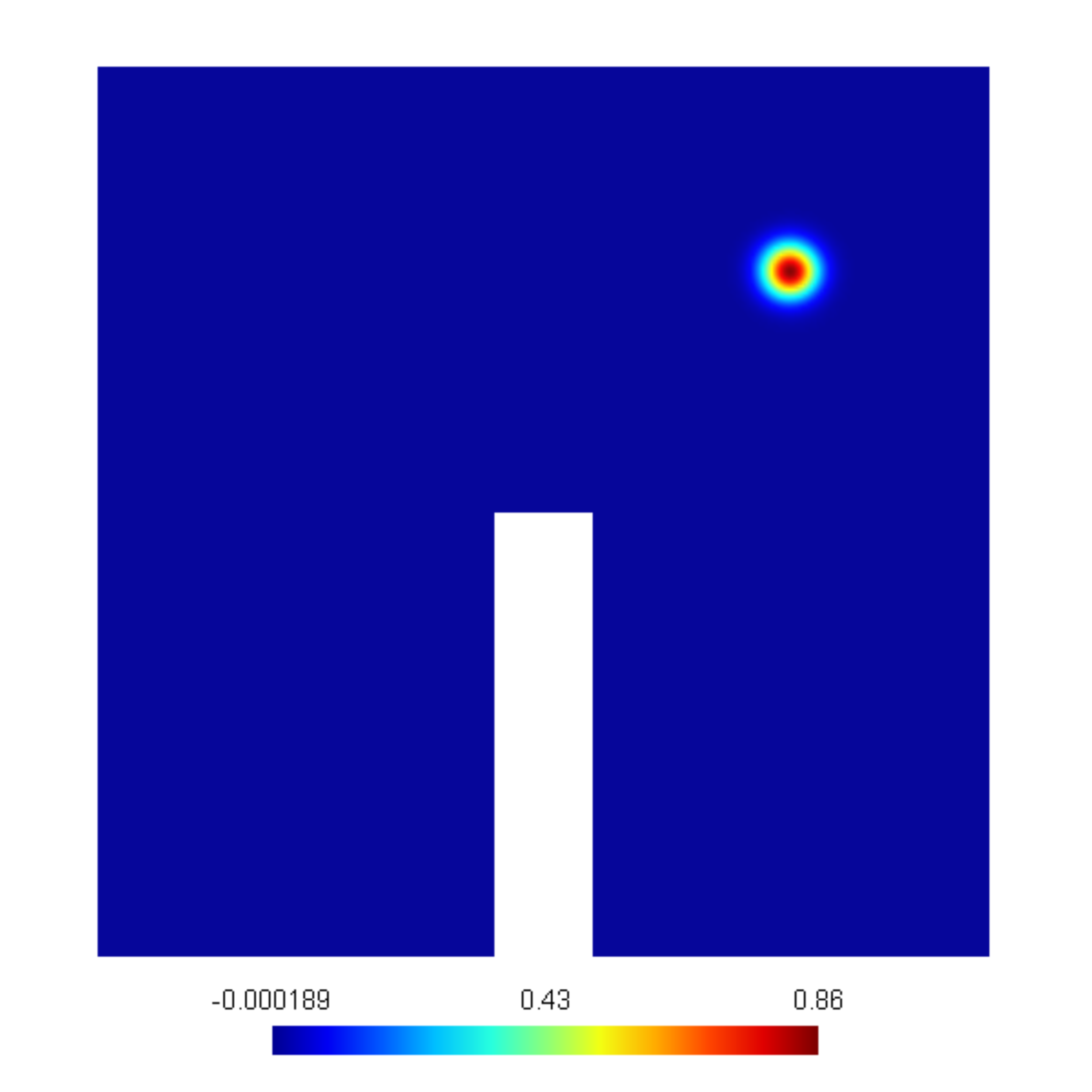}
\includegraphics[width=0.24\textwidth]{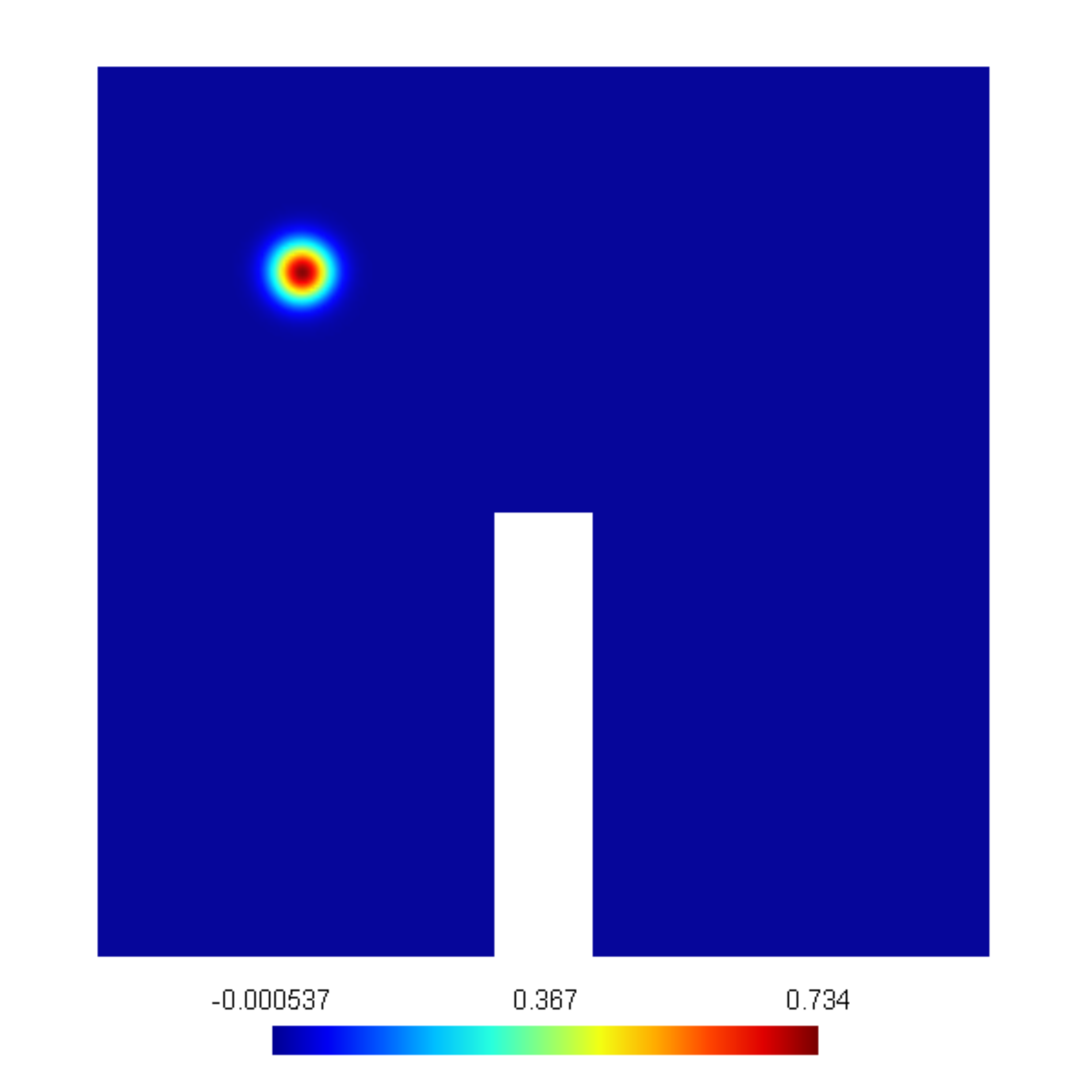}
\includegraphics[width=0.24\textwidth]{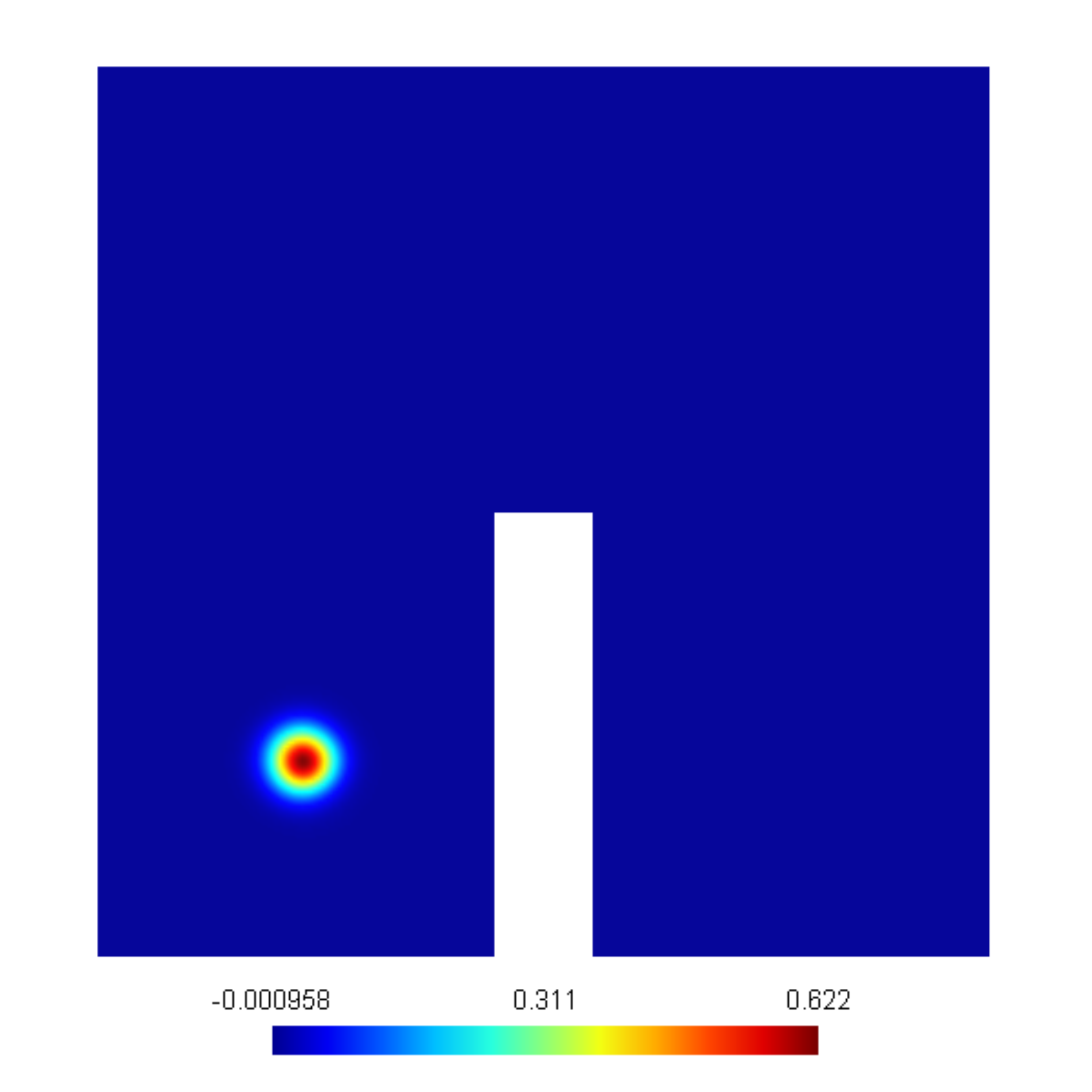}
\caption{\label{f:adv_diff_ex2}Top figures: optimal magnetic force computed by solving \eqref{eq:PJ1_disc}.
 The magnitude the magnetic force  is shown by the background
color. Bottom figures: Evolution of the concentration in $\widetilde{\Omega}$
when the optimal magnetic force is used as an input to \eqref{eq:discrete_explicit}. 
The columns correspond to $t =0, 0.2, 0.4$ and $0.6$ from left to right, respectively.} 
\end{figure}

The top row in Figure~\ref{f:adv_diff_ex2} shows the optimal magnetic force  obtained by solving \eqref{eq:PJ1_disc}.
The bottom row shows the evolution of the concentration at four different time instances.
Notice that the magnetic forces prevent the concentration from reaching the boundary  $\partial \widetilde{\Omega}$
and enables the concentration to reach the final location at  $(-0.1,0.1)$ while avoiding the obstacle.

%

\bibliographystyle{plain}
\bibliography{Ref}

\end{document}